\newtheorem{theorem}{Theorem}[section]
\newtheorem{lemma}[theorem]{Lemma}
\newtheorem{question}[theorem]{Question}
\newtheorem{cor}[theorem]{Corollary}
\theoremstyle{definition}                   
\newtheorem{remark}[theorem]{Remark}
\newtheorem{example}[theorem]{Example}
\newtheorem{defi}[theorem]{Definition}
\newtheorem{notation}[theorem]{Notation}
\newcommand{\Z}{\mathbb{Z}}
\newcommand{\Q}{\mathbb{Q}}
\newcommand{\R}{\mathbb{R}}
\newcommand{\eps}{\varepsilon} 
\newcommand{\de}{\delta}
\newcommand{\cah}{\mathcal{H}}
\newcommand{\cak}{\mathcal{K}}
\newcommand{\K}{\mathcal{C}}
\newcommand{\A}{\mathcal{A}}
\newcommand{\KK}{\widetilde{\mathcal{C}}}
\newcommand{\inte}{\text{int}\,}
\newcommand{\Var}{\mathop{Var}}
\def\beq{\begin{equation}}
\def\eeq{\end{equation}}
\newcommand{\norm}[1]{\left\lVert #1 \right\rVert} 
\newcommand{\abs}[1]{\left\lvert #1 \right\rvert} 
\newcommand{\sk}[1]{\langle #1 \rangle} 
\newcommand{\intnorm}[2]{\sup_{t\in[x - #2, x+ #2]}\left|\int_x^t #1\right|}
\newcommand{\clK}{{\overline{K}}}
\newcommand{\bt}{\begin{theorem}}
\newcommand{\bl}{\begin{lemma}}
\newcommand{\br}{\begin{remark}}
\newcommand{\bd}{\begin{defi}}
\newcommand{\et}{\end{theorem}}
\newcommand{\el}{\end{lemma}}
\newcommand{\er}{\end{remark}}
\newcommand{\ed}{\end{defi}}
\newcommand{\bp}{\begin{proof}}
\newcommand{\ep}{\end{proof}}
\newcommand{\lab}[1]{\label{#1}}
\DeclareMathOperator{\diam}{diam}
\DeclareMathOperator{\supp}{supp}
\newcommand{\leb}{\lambda}
\newcommand{\su}{\subset}
\newcommand{\RR}{\mathbb{R}}
\newcommand{\iD}{\mathcal{D}}
\newcommand{\iH}{\mathcal{H}}
\newcommand{\iJ}{\mathcal{J}}
\newcommand{\sm}{\setminus}
\newcommand{\la}{\lambda}
\newcommand{\al}{\alpha}
\renewcommand{\phi}{\varphi}
\title[Reconstructing geometric objects]{Reconstructing geometric objects from
  the measures of their intersections with test sets}
\author{M\'arton Elekes}
\address{Alfr\'ed R\'enyi Institute of Mathematics\\
PO Box 127, 1364 Budapest, Hungary\\
and
Institute of Mathematics\\
E\"otv\"os Lor\'and University\\
P\'azm\'any P\'eter s.~1/c, 1117
Budapest, Hungary}
\email{elekes.marton@renyi.mta.hu}
\urladdr{http://www.renyi.hu/~emarci}
\thanks{We gratefully acknowledge the support of the
Hungarian Scientific Foundation grants no.~72655, 61600, 83726 and J\'anos
Bolyai Fellowship. The third author was supported by the EPSRC grant EP/G050678/1.
}
\author{Tam\'as Keleti}
\address{Institute of Mathematics\\
E\"otv\"os Lor\'and University\\
P\'azm\'any P\'eter s.~1/c, 1117
Budapest, Hungary}
\email{tamas.keleti@gmail.com}
\urladdr{http://www.cs.elte.hu/analysis/keleti}
\author{Andr\'as M\'ath\'e}
\address{Mathematics Institute, University of Warwick \\
Coventry, CV4~7AL, United Kingdom}
\email{A.Mathe@warwick.ac.uk}
\urladdr{http://www.warwick.ac.uk/~masibe}
\begin{document}
\begin{abstract}
Let us say that an element of a given family $\A$ of subsets of $\R^d$ can be
reconstructed using $n$ test sets if there exist $T_1,\ldots,T_n \su \R^d$
such that whenever $A,B\in \A$ and the Lebesgue measures of $A \cap T_i$ and
$B \cap T_i$ agree for each $i=1,\ldots,n$ then $A=B$.  Our goal will be to
find the least such $n$.

We prove that if $\A$ consists of the translates of a fixed reasonably
nice subset of $\R^d$ then this minimum is $n=d$.  In order to obtain
this result, on the one hand we reconstruct a translate of a fixed
absolutely continuous function of one variable using $1$ test set. On
the other hand, we prove that under rather mild conditions the Radon
transform of the characteristic function of $K$ (that is, the measure
function of the sections of $K$), $(R_\theta \chi_K) (r) = \la^{d-1} (K
\cap \{x \in \RR^d : \langle x,\theta \rangle = r\})$ is absolutely
continuous
for almost every direction $\theta$. These
proofs are based on techniques of harmonic analysis.

We also show that if $\A$ consists of the magnified copies $rE+t$ $(r\ge 1,
t\in\R^d)$ of a fixed reasonably nice set $E\su \R^d$, where $d\ge 2$, then
$d+1$ test sets reconstruct an element of $\A$, and this is optimal.  
This fails in $\R$: we prove
that a closed interval, 
and even a closed interval of length at least $1$ cannot be
reconstructed using $2$ test sets.

Finally, using randomly constructed test sets, we prove that an element of a
reasonably nice $k$-dimensional family of geometric objects can be 
reconstructed using $2k+1$ test sets.  An example from algebraic topology shows
that $2k+1$ is sharp in general.
\end{abstract}

\keywords{Reconstruction, intersection, Lebesgue measure, Fourier
transform, Radon transform, convex set, random construction}

\subjclass[2010]{28A99, 42A61, 26A46, 42A38, 51M05}

\maketitle 

\section{Introduction}

There is a vast literature devoted to various kinds of geometric
reconstruction problems. Part of the reasons why these are so popular is their
connection with geometric tomography.

The set of reconstruction problems we will study is the following. 
Given a family of subsets of $\R^d$ we would like to find ``test sets''
such that whenever someone picks a set from the family and hands us the 
Lebesgue measure of the chunk of this set in the test sets, then we can
tell which the chosen set is. 
In other words, the measures of the intersection of the set with our
test sets uniquely determine the member of the family.
Our aim is to use as few test sets as possible. 
If it is enough to use $n$ test sets then we say
that \emph{an element of the given family can be reconstructed using $n$ test
sets}. The formal definition is the following. We denote the Lebesgue
measure on $\R^d$ by $\la^d$.

\begin{defi}
Let $\A$ be a family of Lebesgue 
measurable subsets of $\R^d$ of finite measure. We say that 
\emph{an element of $\A$ can be reconstructed using $n$ test sets}
if there exist measurable sets $T_1,\ldots,T_n$ such that 
whenever $A,B\in \A$ and $\la^d(A \cap T_i)=\la^d(B \cap T_i)$ for
every $i=1,\ldots,n$ then $A=B$.
\end{defi}  

The first question of this form we are aware of is the following folklore
problem, which asks, using the above terminology, 
whether an axis parallel unit subsquare of $[0,10]\times[0,10]$
can be reconstructed using two test sets. We leave this question to the reader
as an exercise. There are numerous natural modifications of the problem: 
Can a unit segment of $[0,10]$ (or of $\R$) be reconstructed using
$1$ test set? Can a unit disc be reconstructed using $2$ test sets? What
happens in higher dimensions? And so on.

In each of the above problems the given family $\A$ is the set
of translates of a fixed set. Since in $\R^d$ this means $d$ parameters,
we might hope that we can reconstruct a translate of a fixed set using $d$
test sets. One of our main goals is to show that this is indeed true, 
at least under some mild assumptions on the set. For $d\ge 3$ we prove
(Corollary~\ref{c:posmeasure}) this for any bounded measurable set of positive
measure, in the plane (Theorem~\ref{t:rectbound}) for any 
bounded measurable set of positive measure with rectifiable boundary of 
finite length, and in the real line for any finite union of intervals.

The first idea behind all of the above results for $d\ge 2$ is the
following.  Suppose we want to reconstruct a translate of $E\su\R^d$.
Let $T$ be a test set of the form $T=S\times\R^{d-1}$, where $S\su\R$.
Then clearly $\la^d((E+x)\cap T)$ depends only on the first coordinate
of $x$: in fact, one can easily check that \beq\label{e:integral}
\la^d((E+x)\cap T)=\int_S (R_{(1, 0, \ldots, 0)} \chi_E) (t-x_1) dt, \eeq
where $x_1$ denotes the first coordinate of $x$ and $(R_{(1, 0, \ldots,
0)} \chi_E) (t) = \la^{d-1} (K \cap \{x \in \RR^d : x_1 = t\})$ is the
Radon transform of $\chi_E$ in direction $(1, 0, \ldots, 0)$. (In general, 
the Radon transform of a function $f$ is $(R_\theta f) (t) = 
\int_{\{x \in \RR^d : \langle x,\theta \rangle = t\}} f \,d\la^{d-1}$, but will we only apply 
this for characteristic functions.) Thus if
$\int_S (R_{(1, 0, \ldots, 0)} \chi_E) (t-x_1) dt$ uniquely determines
$x_1$, in other words if we can reconstruct a translate of the
$\R\to[0,\infty]$ function $R_{(1, 0, \ldots, 0)} \chi_E$ using the test
set $S\su\R$ then $\la^d((E+x)\cap T)$ determines $x_1$.  Therefore, if
we can do this in $d$ linearly independent directions then we are done.

In order to carry out the above program, first we show (Theorem~\ref{t1}) that 
a translate of a fixed
non-negative not identically zero compactly supported absolutely continuous 
function can be reconstructed using one test set in the above sense. 
Then we get the above results by finding many directions
in which the Radon transform of the characteristic function 
is absolutely continuous.
For concrete sets (for example if 
we want to reconstruct a translate of the unit ball) this is immediate, 
for more general sets we use Fourier transforms.

We also consider the problem of reconstruction of a magnified copy of a fixed
set $E\su\R^d$ ($d\ge 2$), where by a magnified copy of $E$ 
we mean a set of the form $rE+x$, where $r\ge 1$ and $x\in\R^d$.
Since we have $d+1$ parameters, one can hope to
reconstruct using $d+1$ test sets.
Using $\R^d$ as a test set we can reconstruct $r$ since $\la^d(rE+x)$
depends only on $r$. 
The reconstruction of $x$ 
is done similarly as in the above case of translations but now we
need to consider not only translations of the Radon transforms
but also the translations of their rescaled copies, since 
instead of \eqref{e:integral} we 
need here the more general $\la^d((rE+x)\cap T)=r^{d-1}\int_S (R_{(1, 0, \ldots, 0)} \chi_E) (\frac{t-x_1}{r}) dt$.
Therefore we need to choose the set $S\su\R$ so that for \emph{every} $r\ge 1$
the integral $\int_S (R_{(1, 0, \ldots, 0)} \chi_E) (\frac{t-x_1}{r}) dt$ determines $x_1$. 
So this is a harder task than in the case of translations (where we needed this only
for $r=1$) and we can only prove a positive result (Theorem~\ref{t2})
under some additional assumption on the derivative of the Radon transform: we also 
require that $(R_\theta \chi_E)'$ can be approximated well in $L^1$ norm  by a $g\in C^1$ function with small
$\norm{g'}_1$. For functions obtained from concrete nice sets of $\R^d$ 
($d\ge 2$)  
(for example, if we want to reconstruct a
ball of radius at least $1$) this condition can be checked. 
For the
more general case we again have to find many directions in which this 
condition is satisfied.
We have positive general result if we assume that $d\ge 4$, or 
that $d\ge 2$ and $E$ is convex.
In the first case we use Fourier transforms here as well, while in the
second case we take advantage of the Brunn--Minkowski inequality among others.
This way we get (Corollaries~\ref{c:genmagn}~and~\ref{c:convexmagn}) that 
if $E\su\R^d$ is a fixed bounded measurable set
of positive measure and $d\ge 4$, or if $E\su\R^d$ is a convex 
set with nonempty interior and $d\ge 2$ then 
a set of the form $rE+x$, where
$r\ge 1$ and $x\in\R^d$ can be reconstructed using $d+1$ sets.

In all of the above mentioned results the reconstruction is impossible using
fewer test sets, since that would mean a continuous injective map
from the parameter space into a smaller dimensional Euclidean space:
if we attempt to reconstruct an element of $\{A_{\al}:\al\in\Lambda\}$,
where the parametrization is chosen so that $\al\mapsto\la^d(A_\al\cap T)$
is continuous for any measurable set $T\su\R^d$ then reconstruction 
using $T_1,\ldots,T_n$ would yield 
that $\al\mapsto(\la^d(A_\al\cap T_1),\ldots,\la^d(A_\al\cap T_n))$
is a continuous injective $\Lambda\to\R^n$ map, which is impossible
if $\Lambda$ contains an open subset of $\R^{n+1}$, or more
generally an $(n+1)$-dimensional manifold.

The above argument also shows that sometimes we need more functions than the
number of parameters: if the above $\Lambda$ cannot be embedded continuously
into $\R^n$ then one cannot reconstruct an element of 
$\{A_{\al}:\al\in\Lambda\}$ using $n$ test sets.

\begin{example}\label{e:2k}
Let $\Lambda$ be the $k$-skeleton of a $2k+2$ simplex (that is, $\Lambda$
is the union of the $k$-dimensional faces of a simplex in $\R^{2k+2}$).
By the van~Kampen--Flores Theorem (see e.g.~in \cite{Matousek}) 
$\Lambda$ cannot be embedded continuously into $\R^{2k}$, 
so the above argument shows
that a unit ball in $\R^{2k+2}$ centered at a point of $\Lambda$ cannot 
be reconstructed using $2k$ sets, although the parameter space is
$k$-dimensional, which means that we only have $k$ parameters.
\end{example}

In Section~\ref{freedom} we prove (Theorem~\ref{special case})
that reasonably nice geometric objects 
parametrized reasonably nicely by $k$ parameters
can be reconstructed using $2k+1$ test sets,
which is sharp according to the above example.
The test sets are given using a random construction.
As applications we get 
for example that an $n$-gon in the plane can be reconstructed 
using $4n+1$ test sets, an ellipsoid in $\R^3$ can be reconstructed 
using $19$ test sets, and a ball in $\R^d$ can be reconstructed using $2d+3$
test sets, in particular an interval in $\R$ 
can be reconstructed using $5$ test sets. 
(Here and in the sequel $n$-gons, ellipsoids, balls and intervals are
understood to be closed.)

One might be tempted to say that Example~\ref{e:2k} is quite artificial and in natural
situations the number of parameters should suffice, and probably an
interval
can be reconstructed using $2$ test sets. But this is false, we prove 
(Theorem~\ref{t:unitint}) that an
interval cannot be reconstructed using 
$2$ test sets, not even if we consider only intervals of length more than $1$.
Like above, the obstacle is again of topological nature, although it is 
more complicated since the parameter space can be embedded into $\R^2$.

\begin{remark}
It is natural to ask what happens if we try to reconstruct a set using test 
\textit{functions} by considering the integrals of the functions over the set, or
even test measures by considering the measures of the set.

First we describe why a nice geometric object can be reconstructed using the 
following single test measure.
Let
$$
\mu = \sum_{i=1}^\infty \frac{\delta_{q_i}}{3^i}, 
$$
where $\{q_1, q_2, \ldots\} = \Q^d$ and $\delta_x$ denotes the Dirac measure
at $x$.
Then it is easy to see that $\mu(A) = \mu(B) \iff A \cap \Q^d = B \cap \Q^d$.
Suppose that 
the symmetric difference of any two distinct sets of $\A$ contains
a ball, which is always the case for families of geometric objects.
Then $\mu$ reconstructs a member of $\A$, that is, 
whenever $A,B\in\A$ are distinct sets then $\mu(A)\neq\mu(B)$.

Similarly to the case of test sets, it is not possible to reconstruct
a member of $\A$
with fewer bounded test functions than the dimension of the parameter space
of $\A$.
However, as opposed to the case of test sets, 
it is almost obvious to reconstruct a translate of a fixed  
bounded measurable set $E\su\R^d$
using $d$ bounded test functions: 
the functions 
$\arctan x_1,\ldots,\arctan x_d$ reconstruct a translate of $E$.   

It is also very easy to reconstruct a magnified copy $rE+t$ of a fixed 
bounded measurable set 
$E\su\R^d$
using $d+1$ bounded test functions: the constant $1$ function determines 
$r$, and then 
$\arctan x_1,\ldots,\arctan x_d$ determine $t$. As it was mentioned earlier,
finding $d+1$ test sets that reconstruct a magnified copy of a fixed set 
is much harder in higher dimensions, and it is even impossible in $\R$: an interval
cannot be reconstructed using two test sets.

Therefore the reconstruction problem for intervals shows nicely the difference 
between test measures, test functions and test sets: an interval of $\R$
can be reconstructed using $1$ test measure, it can be reconstructed using
$2$ test functions (but $1$ does not suffice) and it cannot be reconstructed using less than $3$ test 
sets.  
\end{remark}

 
%

\section{Reconstruction of an interval}\label{s:int}

By interval we always mean a bounded nondegenerate closed interval.
The following simple lemma is the key tool to reconstruct a translate
of a fixed interval or a finite union of intervals.

\begin{lemma}\label{l:semigroup}
Suppose that $G\su (0,\infty)$, $h\in G$ and $G+h\su G$.
Let $A\su \R$ be a measurable set that has positive Lebesgue measure in
every nonempty interval. Suppose that $A\cap (A+G)=\emptyset$.
Then an interval of length $h$ can be reconstructed using the test set $T=A\cup
(A+G)$;
in fact, $x\mapsto\la([x,x+h]\cap T)$ is strictly increasing.
\end{lemma}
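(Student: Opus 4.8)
The plan is to show directly that the function $f(x)=\la([x,x+h]\cap T)$ is strictly increasing, which is stronger than (and immediately implies) that an interval of length $h$ can be reconstructed using $T$. The decomposition $T=A\cup(A+G)$ with $A\cap(A+G)=\emptyset$ gives $f(x)=\la([x,x+h]\cap A)+\la([x,x+h]\cap(A+G))$. First I would fix $x<y$ and compare $f(x)$ with $f(y)$. Writing $g(x)=\la([x,x+h]\cap A)$ and $g_G(x)=\la([x,x+h]\cap(A+G))$, the idea is that the ``$A$-part'' can only lose mass as the window slides right, but this loss is compensated — with something to spare — by the ``$(A+G)$-part'', because shifting $A$ by $h\in G$ lands inside $A+G$ by the hypothesis $G+h\su G$ (note $h\in G$, so $h+h\in G+h\su G$, hence $A+h\su A+G$).

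The key computation is the following. Since $h\in G$ and $G$ is contained in $(0,\infty)$, the set $A+h$ is disjoint from $A$ (as $A\cap(A+G)=\emptyset$ and $A+h\su A+G$). Now observe that for any $t$,
\[
\la([t,t+h]\cap(A+G))\ge \la([t,t+h]\cap(A+h))=\la([t-h,t]\cap A).
\]
Combining the two pieces, for $x<y$ I would estimate $f(y)-f(x)$ by splitting each window into the part before and after a suitable translate, using that $[x,x+h]$ and $[y-h,y]$ (appropriately placed copies of $A$) interact, and that any mass of $A$ that leaves the window $[x,x+h]$ through its left endpoint reappears in the $(A+G)$-window, plus the strictly positive contribution coming from the fact that $A$ has positive measure in every interval. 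Concretely: the mass $g(x)-\la([y,x+h]\cap A)$ (the part of $A$ in $[x,x+h]$ lying to the left of $y$, when $y\le x+h$; a similar accounting when $y>x+h$) is at most the extra $(A+G)$-mass that $[y,y+h]$ picks up relative to $[x,x+h]$, because that lost piece of $A$, translated by $h$, sits inside $A+G$ and inside $[y-h+\text{(shift)},\,y+h]$... — and I would make this precise by the substitution above, $\la([t,t+h]\cap(A+G))\ge\la([t-h,t]\cap A)$, applied at $t=y$ versus $t=x$.

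The main obstacle — the only real content — is organizing this cancellation so that the inequality is \emph{strict}. Monotonicity (weak) should fall out of the $A+h\su A+G$ containment alone; strictness uses that $A$ has positive Lebesgue measure in \emph{every} nonempty interval, so that when the window moves from $x$ to $y$ there is genuinely positive $A$-mass that enters the $(A+G)$-window but was not ``already counted'', yielding $f(y)>f(x)$. I expect the cleanest route is: (i) prove $f(y)\ge f(x)$ for $x\le y$ by the translation-containment argument above; (ii) then show that if $f(y)=f(x)$ for some $x<y$, the chain of inequalities forces $\la(I\cap A)=0$ for some nonempty interval $I$ (roughly, an interval of length $y-x$ near the right end where $A+G$ would have had to coincide with $A+h$ exactly and no new mass entered), contradicting the hypothesis on $A$. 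Once $f$ is shown strictly increasing, reconstruction is immediate: if $[a,a+h]$ and $[b,b+h]$ are two intervals of length $h$ with $\la([a,a+h]\cap T)=\la([b,b+h]\cap T)$ then $f(a)=f(b)$, so $a=b$.
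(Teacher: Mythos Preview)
Your approach is the paper's, but as written there is a genuine gap: you never correctly deploy the hypothesis $G+h\subseteq G$. Your parenthetical ``note $h\in G$, so $h+h\in G+h\subseteq G$, hence $A+h\subseteq A+G$'' is confused --- the containment $A+h\subseteq A+G$ follows immediately from $h\in G$ alone. The hypothesis $G+h\subseteq G$ is needed for the \emph{other} containment $(A+G)+h\subseteq A+G$, and that is what controls the $(A+G)$-mass leaving the window on the left as it slides right. Your one displayed inequality $\lambda([t,t+h]\cap(A+G))\ge\lambda([t-h,t]\cap A)$ uses only $h\in G$ and cannot by itself give even weak monotonicity of $f$: it bounds $g_G(t)$ from below, but says nothing about $g_G(y)-g_G(x)$, so your step~(i) does not go through as stated. (Concretely, if one only had $G=\{h\}$, the inequality you wrote would still hold, but $f$ need not be monotone.)

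Here is the paper's clean execution, which makes the roles of the two hypotheses transparent and avoids your two-step weak/strict plan. It suffices to take $u<v<u+h$; then
\[
f(v)-f(u)=\lambda([u+h,v+h]\cap T)-\lambda([u,v]\cap T)
=\lambda([u+h,v+h]\cap A)+\lambda([u+h,v+h]\cap(A+G))-\lambda([u,v]\cap T).
\]
The first term is strictly positive by the density hypothesis on $A$. For the remaining two, translate: $\lambda([u+h,v+h]\cap(A+G))=\lambda([u,v]\cap(A+G-h))$, and observe $T=A\cup(A+G)\subseteq A+G-h$, since $A\subseteq A+G-h$ (from $h\in G$) and $A+G\subseteq A+G-h$ (from $G+h\subseteq G$). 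Hence those two terms together are nonnegative, and $f(v)>f(u)$ drops out directly.
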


\begin{proof}
Clearly it is enough to prove that 
$\la([u,u+h]\cap T) < \la([v,v+h]\cap T)$ for any $u<v<u+h$.
So let  $u<v<u+h$. Then
$$
\la([v,v+h]\cap T) - \la([u,u+h]\cap T) = 
\la([u+h,v+h]\cap T) - \la([u,v]\cap T) 
$$
$$
=\la([u+h,v+h]\cap A) + \la([u+h,v+h]\cap (A+G)) 
- \la([u,v]\cap (A \cup (A+G))).
$$
The first term is positive since $A$ has positive measure in
every nonempty interval. By the translation invariance of $\la$,
the second term can be written as
$\la([u,v]\cap (A+G-h))$. 
Hence it is enough to prove that $A \cup (A+G) \su A+G-h$.
We have $A\su A+G-h$ since $h\in G$, 
and we have $A+G\su A+G-h$ since $G+h\su G$.
\end{proof}

\begin{theorem}\label{t:unitint}
A translate of a fixed interval can be reconstructed using $1$ test set; 
that is, for any $h$ there exists a set $T\su\R$ such that 
$\la([x,x+h]\cap T)\neq \la([y,y+h]\cap T)$ if $x\neq y$.
\end{theorem}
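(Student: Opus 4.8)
The plan is to apply Lemma~\ref{l:semigroup} with a suitable choice of the semigroup-like set $G$ and base set $A$. The lemma already gives exactly the conclusion we want provided we can exhibit, for the given $h>0$, a set $G\su(0,\infty)$ with $h\in G$ and $G+h\su G$, together with a measurable $A\su\R$ having positive measure in every nonempty interval and satisfying $A\cap(A+G)=\emptyset$. So the work is entirely in constructing $G$ and $A$.

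For $G$ the natural choice is $G=[h,\infty)$: then clearly $h\in G$ and $G+h=[2h,\infty)\su G$, so the semigroup condition is automatic. With this choice, $A+G=\bigcup_{g\ge h}(A+g)$, and the condition $A\cap(A+G)=\emptyset$ becomes: no point of $A$ lies at distance $\ge h$ to the right of another point of $A$, i.e.\ $\diam(A)<h$ — more precisely $\sup A-\inf A\le h$ with a bit of care, but it is cleanest to just require $A\su(0,h)$ (or any interval of length $h$). Then the remaining task is to find $A\su(0,h)$ with $\la(A\cap I)>0$ for every nonempty open interval $I$; equivalently a ``fat Cantor-type'' set that is nowhere dense yet has positive measure in every subinterval of $(0,h)$. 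A standard construction works: enumerate the rational subintervals of $(0,h)$ and inductively place inside each a closed fat-Cantor set of small measure, disjoint from the ones chosen before, so that the union meets every subinterval in positive measure; alternatively, one can take $A$ to be a suitable positive-measure $F_\sigma$ set whose complement is dense. One must double-check $A\cap(A+G)=\emptyset$: if $a\in A$ and $a'+g\in A$ with $a'\in A$, $g\ge h$, then $a=a'+g\ge g\ge h$, contradicting $a<h$; so the condition holds.

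With $G$ and $A$ as above, Lemma~\ref{l:semigroup} applies verbatim: the test set $T=A\cup(A+G)$ has the property that $x\mapsto\la([x,x+h]\cap T)$ is strictly increasing, hence injective, which is exactly the assertion of the theorem. (Note $T$ has infinite measure, but the definition only requires $T$ measurable; if one insists on finite measure one can intersect $A+G$ with a large bounded interval, which does not affect the monotonicity argument as long as the interval is taken large enough relative to the range of $x$ one cares about — but as stated the theorem imposes no such restriction.)

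The only genuinely non-trivial ingredient is the construction of $A$: a measurable set of positive measure in every subinterval of a bounded interval. This is classical (fat Cantor sets glued together over a countable base of subintervals), so I expect no real obstacle; everything else is a direct verification of the hypotheses of Lemma~\ref{l:semigroup}. If one wanted to avoid even this, one could instead take $A$ to be a dense open set of small measure — but then $A\cap(A+G)$ need not be empty, so the fat-Cantor route is the clean one.
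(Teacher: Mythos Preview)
There is a genuine gap. Lemma~\ref{l:semigroup} requires that $A$ have positive Lebesgue measure in \emph{every} nonempty interval of $\R$, not merely in every subinterval of $(0,h)$. Your choice $A\su(0,h)$ violates this hypothesis, and the conclusion of the lemma fails correspondingly. Indeed, with $G=[h,\infty)$ and $A\su(0,h)$ dense-in-measure in $(0,h)$, one has $\inf A=0$ and hence
\[
A+G=\bigcup_{a\in A}[a+h,\infty)=(h,\infty),
\]
so $T=A\cup(h,\infty)$. But then for every $x\ge h$ the interval $[x,x+h]$ lies in $[h,\infty)$ and misses $A$, giving $\la([x,x+h]\cap T)=h$; likewise $\la([x,x+h]\cap T)=0$ for every $x\le -h$. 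The map $x\mapsto\la([x,x+h]\cap T)$ is therefore constant on each of these half-lines and certainly not injective on $\R$.

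The tension you ran into is intrinsic: if $A$ is to have positive measure in every interval of $\R$, it cannot be bounded, so one cannot enforce $A\cap(A+G)=\emptyset$ simply by making $\diam A$ small. The paper resolves this by taking $G$ \emph{discrete} (namely $G=\{1,2,\ldots\}$ after normalizing $h=1$) and building an unbounded $A$ as $A=\bigcup_{k\in\Z}(A_k+k)$, where the $A_k$ are pairwise disjoint subsets of $[0,1)$ each of which has positive measure in every subinterval of $[0,1)$. Disjointness of the $A_k$ then gives $A\cap(A+G)=\emptyset$, while the tiling by integer translates ensures $A$ meets every interval in positive measure.
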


\begin{proof}
We can clearly suppose that $h=1$.
It is not hard to see that one can 
choose countably many pairwise disjoint measurable subsets of $[0,1]$ such that 
all of them have positive measure in every nonempty subinterval of $[0,1]$.
Using $\Z$ as the index set we denote them by $A_k$ ($k\in \Z$).
Then  Lemma~\ref{l:semigroup} applied to $A=\cup_{k\in\Z} (A_k +k)$,
$G=\{1,2,\ldots\}$, $h=1$ completes the proof. 
\end{proof}

To reconstruct a translate of a fixed finite union of intervals,
Lemma~\ref{l:semigroup} has to be applied for a more complicated $G$, for
which it is a bit harder to construct a suitable $A$.
This is done in the following lemma.
 
We call a set $E\su \R$ \emph{locally finite} if it has finitely many elements
in every bounded interval.

\begin{lemma}\label{l:discrete}
For any locally finite set $G\su (0,\infty)$
there exists a measurable set $A\su \R$ such that 
$A$ has positive Lebesgue measure in
every nonempty interval and $A\cap (A+G)=\emptyset$.
\end{lemma}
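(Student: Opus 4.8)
The plan is to construct $A$ as a countable union $A=\bigcup_{m\ge 1}B_m$ by recursion, arranging that $A$ meets every interval in positive measure while staying disjoint from each translate $A+g$, $g\in G$.

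\emph{Reformulating the two requirements.} Put $H=G\cup(-G)$; this set is again locally finite, and since $\delta:=\min G>0$ we have $H\cap(-\delta,\delta)=\emptyset$. The condition ``$A\cap(A+g)=\emptyset$ for every $g\in G$'' is equivalent to $(A-A)\cap H=\emptyset$. Fix also an enumeration $\{J_m\}_{m\ge 1}$ of all bounded open intervals with rational endpoints; since every nonempty interval contains some $J_m$, it suffices to guarantee $\la(A\cap J_m)>0$ for each $m$. So the goal becomes: produce sets $B_m$ with $B_m\su J_m$, $\la(B_m)>0$, $\diam B_m<\delta$, and $(B_m-B_{m'})\cap H=\emptyset$ for all $m,m'$ (the diameter condition handles $m=m'$, because $B_m-B_m\su(-\delta,\delta)$). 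Then $A:=\bigcup_m B_m$ is as required.

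\emph{The recursion.} Suppose $B_1,\dots,B_{m-1}$ have been chosen. The new set $B_m$ must avoid the forbidden region $W_m:=\bigcup_{i<m}(B_i+H)$, since $B_m\cap(B_i+h)\ne\emptyset$ would put $h\in B_m-B_i$; conversely, any positive-measure subset of a length-$<\delta$ subinterval of $J_m\sm W_m$ is an admissible choice. So the recursion proceeds as long as $\la(J_m\sm W_m)>0$ at every stage. For each $i<m$ only finitely many translates $B_i+h$ ($h\in H$) meet the bounded set $J_m$ -- such an $h$ lies in the bounded set $J_m-B_i$, and $H$ is locally finite -- whence $\la(W_m\cap J_m)\le\sum_{i<m}r_{m,i}\,\la(B_i)$ with $r_{m,i}:=\#(H\cap\overline{J_m-J_i})<\infty$. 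It therefore remains to choose the $B_i$ -- their measures, and also their diameters and internal shape -- small and sparse enough, along a schedule decided in advance from the fixed enumeration, that this estimate stays strictly below $\la(J_m)$ for every $m$.

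\emph{The main obstacle} is precisely this bookkeeping. It is genuinely delicate because $H$, although locally finite, may be highly concentrated in some bounded regions -- for example $H$ might contain, near each large integer $N$, about $N$ points spaced roughly $N^{-2}$ apart. Then a very short interval $J_m$ lying near a translate of such a cluster can be covered entirely by the $H$-translates of a \emph{single} earlier $B_i$: translating $B_i$ by the $\sim N$ nearby elements of $H$ smears $B_i$ into an interval long enough to swallow $J_m$. Avoiding this forces one to exploit the local finiteness of $G$ quantitatively: the $B_i$ should be taken to be finite unions of extremely short intervals, with the lengths (and total length) decaying rapidly along a schedule adapted both to $\{J_m\}$ and to the local density of $H$, so that even the smeared translates of $B_1,\dots,B_{m-1}$ leave a positive-measure length-$<\delta$ piece of $J_m$ free at stage $m$. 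Once such a schedule is in place, the recursion produces the desired set $A$, and the lemma follows.
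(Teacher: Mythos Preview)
Your setup and reduction are correct, and the recursive framework --- build $B_m\subset J_m$ of positive measure avoiding $W_m=\bigcup_{i<m}(B_i+H)$ --- is exactly the paper's. But the argument breaks precisely at the point you flag as the ``main obstacle'': you assert that a suitable schedule exists without constructing one, and the shape you propose for the $B_i$ cannot work. If $B_1$ contains any nondegenerate interval $I$, then for each $h\in H$ the translate $I+h\subset B_1+H$ contains some rational interval $J_m$ with $m>1$; at that stage $J_m\subset W_m$, so $J_m\setminus W_m=\emptyset$ and there is no room for $B_m$. (Nor can any other $B_i$ meet $J_m$: if $\lambda(B_i\cap J_m)>0$ then $h\in B_i-B_1$, contradicting $(B_i-B_1)\cap H=\emptyset$.) Thus every $B_i$ must have empty interior; ``finite unions of extremely short intervals'' are ruled out no matter how short. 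Your measure estimate $\lambda(W_m\cap J_m)\le\sum_{i<m}r_{m,i}\,\lambda(B_i)$ reflects the same failure: in your own example (about $N$ points of $H$ near $N$), short intervals $J_m$ near $N$ make $r_{m,1}/\lambda(J_m)\to\infty$, so keeping the bound below $\lambda(J_m)$ forces $\lambda(B_1)=0$.

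The paper resolves the obstacle by category rather than measure. Each $A_n$ (your $B_n$) is taken to be a \emph{closed nowhere dense} subset of $I_n$ with positive measure --- a fat Cantor set. At stage $n$ the forbidden region is then a finite union of translates of the closed nowhere dense set $\bigcup_{j<n}A_j$, hence itself closed and nowhere dense; its complement in $I_n$ therefore contains an open interval, and $A_n$ is placed there. No schedule and no quantitative control on the local density of $H$ is needed.
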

\begin{proof}
Let $I_1,I_2,\ldots$ be an enumeration of the intervals with rational
endpoints of length less than the minimal element of $G$. 

By induction we define nowhere dense closed sets $A_1,A_2,\ldots$
with positive measure such that for every $n$, $A_n\su I_n$ and 
\beq\label{e:union}
\left(\bigcup_{j=1}^n A_j\right) \cap \left(\bigcup_{j=1}^n A_j + G \right) = 
\emptyset.
\eeq
This will complete the proof since then we can choose $A=\cup_{j=1}^\infty A_j$.

We can take $A_1$ as an arbitrary nowhere dense closed subset of $I_1$
with positive measure since then \eqref{e:union} is guaranteed by 
$\diam(I_1)< \min G$.

Suppose that we already chose $A_1,\ldots,A_{n-1}$ with all the requirements
up to $n-1$. For any $A_n\su I_n$ we have $A_n \cap (A_n+G)=\emptyset$.
To complete the proof we need to choose a nowhere dense closed set $A_n\su I_n$ of positive measure
disjoint to $(\cup_{j=1}^{n-1} A_j) + G$ and $ (\cup_{j=1}^{n-1} A_j) - G$.
Since $G$ is locally finite, we only need to avoid the union of finitely many translates
of the nowhere dense closed set $\cup_{j=1}^{n-1} A_j$. As this union is closed and nowhere dense, it is not of full measure in $I_n$.
\end{proof}

\begin{theorem}\label{t:finiteint}
Let $E$ be a finite union of intervals in $\R$.
Then a translate of $E$ can be 
reconstructed using $1$ set; that is,
there exists a measurable set $T$ such that 
$\la((E+t)\cap T)\neq  \la((E+t')\cap T)$ if
$t\neq t'$.
\end{theorem}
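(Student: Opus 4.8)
The plan is to produce a single test set $T$ for which the map $t\mapsto\la((E+t)\cap T)$ is strictly increasing, hence injective; that is exactly a reconstruction of a translate of $E$ using one set. After merging overlapping or touching intervals we may assume $E=\bigcup_{i=1}^m[a_i,b_i]$ with $b_i<a_{i+1}$, and we write $h_i=b_i-a_i>0$. The idea is the same as in Theorem~\ref{t:unitint}, except that the ``semigroup of admissible shifts'' must now absorb all the lengths $h_i$ simultaneously. I would take
$$
G=\{\,n_1h_1+\cdots+n_mh_m\ :\ n_i\text{ nonnegative integers},\ n_1+\cdots+n_m\ge1\,\}\su(0,\infty).
$$
Then $G$ is locally finite, every $h_i$ lies in $G$, and $G+h_i\su G$ for every $i$ (in fact $G+G\su G$). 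By Lemma~\ref{l:discrete} one can choose a measurable $A\su\RR$ having positive Lebesgue measure in every nonempty interval with $A\cap(A+G)=\emptyset$, and one puts $T=A\cup(A+G)$.

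The next step is to record the two features of $T$ that will force monotonicity. First, $T+h_i\su T$ for every $i$: indeed $A+h_i\su A+G\su T$ because $h_i\in G$, and $A+G+h_i\su A+G\su T$ because $G+h_i\su G$. Second, $A-h_1$ is disjoint from $T$; granting this, $A-h_1\su(T-h_1)\sm T$, so $(T-h_1)\sm T$ has positive measure in every nonempty interval. To see the disjointness one uses $A\cap(A+g)=\emptyset$ for each $g\in G$: if $x\in(A-h_1)\cap A$ then $x\in A$ and $x+h_1\in A$, so $x+h_1\in A\cap(A+h_1)=\emptyset$, a contradiction since $h_1\in G$; and if $x\in(A-h_1)\cap(A+g)$ for some $g\in G$, then with $y=x+h_1$ one has $y\in A$ and $y-(h_1+g)=x-g\in A$, so $y\in A\cap(A+(h_1+g))=\emptyset$, again a contradiction since $h_1+g\in G$. (This is the computation implicit in the proof of Lemma~\ref{l:semigroup}, now carried out for the more complicated $G$.)

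Finally I would deduce strict monotonicity of $f(t):=\la((E+t)\cap T)=\sum_{i=1}^m\la\big(([a_i,b_i]+t)\cap T\big)$. It is enough to prove $f(t+\tau)>f(t)$ for every $t\in\RR$ and every $\tau$ with $0<\tau<\min_i h_i$; a routine chaining over finitely many steps of size less than $\min_i h_i$ then gives strict increase on all of $\RR$. For such small $\tau$ the intervals $[a_i,b_i]+t$ and $[a_i,b_i]+t+\tau$ overlap, so summing over $i$ the elementary identity for a single interval gives
$$
f(t+\tau)-f(t)=\sum_{i=1}^m\Big(\la\big((b_i+t,\,b_i+t+\tau]\cap T\big)-\la\big((a_i+t,\,a_i+t+\tau]\cap T\big)\Big).
$$
Shifting the first interval of the $i$-th summand by $-h_i$ and using $T\su T-h_i$ turns this into
$$
f(t+\tau)-f(t)=\sum_{i=1}^m\la\Big((a_i+t,\,a_i+t+\tau]\cap\big((T-h_i)\sm T\big)\Big)\ \ge\ \la\Big((a_1+t,\,a_1+t+\tau]\cap\big((T-h_1)\sm T\big)\Big)>0,
$$
the last inequality because $(T-h_1)\sm T\supseteq A-h_1$ has positive measure in every nonempty interval. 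Hence $f$ is strictly increasing, so $T$ reconstructs a translate of $E$.

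The construction of $G$ and $T$ is really the whole point; the only places that need care are the disjointness assertion $(A-h_1)\cap T=\emptyset$, which is exactly where the semigroup property of $G$ is used, and the restriction $\tau<\min_i h_i$, which is what lets the symmetric difference of $E+t$ and $E+t+\tau$ split cleanly into the ``$a_i$-pieces'' and the ``$b_i$-pieces''. Everything else is bookkeeping, and all of it rests on Lemma~\ref{l:discrete} being available for the locally finite set $G$.
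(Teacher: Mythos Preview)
Your proof is correct and follows essentially the same route as the paper: the same additive semigroup $G$ generated by the interval lengths, the same set $A$ from Lemma~\ref{l:discrete}, and the same test set $T=A\cup(A+G)$. The only difference is packaging: the paper simply invokes Lemma~\ref{l:semigroup} once for each $h_i$ to conclude that every $x\mapsto\la((I_i+x)\cap T)$ is strictly increasing (and then sums), whereas you unfold that lemma's computation inline and bound the sum below by a single term.
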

\begin{proof}
Let $E=\cup_{j=1}^n I_j$, where $I_j$ is an interval of length $a_j$ 
and the intervals are pairwise disjoint.
Let $G$ be the additive semigroup generated by $a_1,\ldots,a_n$;
that is,
$G=\{\sum_{i=1}^n k_i a_i \ : \ k_1,\ldots,k_n\in\{0,1,2,\ldots\}\} \sm \{0\}$.
Then $G\su (0,\infty)$ is a locally finite set and it contains every $a_i$.
Let $A$ be the set obtained by Lemma~\ref{l:discrete} from $G$
and let $T=A\cup(A+G)$.
Then by Lemma~\ref{l:semigroup} each function 
$x\mapsto \la((I_j+x)\cap T)$ is strictly increasing, so
their sum $x\mapsto \la((E+x)\cap T)$ is also strictly increasing,
which completes the proof.
\end{proof}

An interval has two parameters, so one cannot reconstruct an interval using
$1$ test set, but one might expect that $2$ test sets should be enough. 
We show that this is false. 
The following lemma concerns the topological obstacle of the
reconstruction using two sets.
The lemma is surely well known for topologists, but for completeness
we present a short proof. 

\begin{lemma}\label{findreference}
Let $U\subset\R^2$ be a path connected open set and let $f:U\to\R^2$ be continuous and injective. Suppose that $f$ is differentiable at two points $a$ and $b$ such that the determinant of the Jacobi matrix at $a$, $\det f'(a)>0$. Then $\det f'(b)\ge 0$.
\end{lemma}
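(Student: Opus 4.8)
The plan is to argue by contradiction: suppose $\det f'(a) > 0$ but $\det f'(b) < 0$. The idea is to show that an injective continuous map on a path-connected open set cannot change the ``orientation'' of its derivative between two points of differentiability, because this forces a local failure of injectivity somewhere along a path joining $a$ to $b$.

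First I would reduce to a convenient local picture at each of the two points. Near $a$, since $f$ is differentiable with $\det f'(a) > 0$, the affine approximation $x \mapsto f(a) + f'(a)(x-a)$ is an orientation-preserving linear isomorphism, so on a small circle of radius $\rho$ around $a$ the map $f$ is homotopic (through maps avoiding $f(a)$, for $\rho$ small) to this affine map; hence the local degree (winding number of $f$ restricted to the small circle around the image point $f(a)$) equals $+1$. Similarly, near $b$ the local degree is $-1$ because $\det f'(b) < 0$ makes the affine approximation orientation-reversing. The key point is that this local degree is a well-defined integer whenever $f$ is continuous and $f(a)$ (resp.\ $f(b)$) is not in the image of the boundary circle, and it is a homotopy invariant.

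Next I would use path-connectedness of $U$ to join $a$ and $b$ by a path $\gamma$ in $U$, and I would track how the local degree of $f$ around small circles centered at points moving along $\gamma$ varies. Since $f$ is injective (not merely near $a$ and $b$, but globally on $U$), for any point $p$ on the path and any sufficiently small radius $\rho_p$, the value $f(p)$ is not hit by $f$ on the circle $\partial B(p,\rho_p)$, so the local degree $d(p)$ is defined; and by a standard compactness argument along the compact image $\gamma([0,1])$ one can choose these radii and overlapping neighborhoods so that $d(p)$ is \emph{locally constant} in $p$ — moving $p$ a little and shrinking $\rho$ deforms the circle-restriction of $f$ through maps avoiding the moving image point. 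Local constancy along a path forces $d(a) = d(b)$, contradicting $d(a) = +1 \neq -1 = d(b)$.

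The main obstacle — and the place where injectivity is truly used — is justifying that the local degree is defined at \emph{every} point of the path and varies continuously (hence is locally constant) there. Differentiability is only assumed at $a$ and $b$, so at intermediate points one cannot use a linear model; instead injectivity guarantees that $f(p) \notin f(\partial B(p,\rho))$ for small $\rho$, which is exactly what is needed for the winding number to make sense, and a uniform-continuity/compactness argument on a tube around $\gamma$ gives the local constancy. The only subtlety to handle carefully is the two endpoints: there one must check that the degree computed from the linear approximation agrees with the purely topological degree, i.e.\ that for small $\rho$ the map $f|_{\partial B(a,\rho)}$ is homotopic rel avoiding $f(a)$ to its derivative $f'(a)|_{\partial B(a,\rho)}$; this is immediate from the definition of differentiability (the error term is $o(\rho)$, smaller than the $\asymp \rho$ size of the linear part). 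Everything else is bookkeeping with winding numbers.
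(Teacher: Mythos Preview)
Your proposal is correct and follows essentially the same approach as the paper: argue by contradiction, compute the winding number (local degree) of $f$ on small circles around $a$ and $b$ to be $+1$ and $-1$ respectively via the linear approximation, then use path-connectedness and injectivity to see this degree is locally constant along a path from $a$ to $b$. The paper's proof is a terse three-sentence version of exactly this argument, and your write-up usefully spells out the subtleties (varying radii, why injectivity guarantees the winding number is defined at intermediate points) that the paper leaves implicit.
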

\begin{proof}
%
Suppose that $\det f'(b)<0$.
Let $C:[0,2\pi]\to \R^2$ be the curve $C(t)=e^{it}$. If $r$ is small enough, the winding number of the curve $f(a+rC)$ around $f(a)$ is $1$, while the winding number of $f(b+rC)$ around $f(b)$ is $-1$. However, $U$ is path-connected and the winding number of $f(x+rC)$ is continuous in $x$, which yields a contradiction.
\end{proof}

\begin{theorem}\label{t:interval}
An interval in $\R$ cannot be reconstructed using two measurable sets.
Moreover, even an interval of length bigger than $1$ cannot be reconstructed
using two sets;
that is, for any pair of measurable sets $A, B \su \RR$ there exist two
distinct intervals $I$ and $I'$  of length bigger than $1$ such that
$\leb( I \cap A ) =\leb( I' \cap A )$ and $\leb( I \cap B ) =\leb( I' \cap B )$.
\end{theorem}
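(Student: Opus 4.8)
The plan is to parametrize intervals $I = [s, s+\ell]$ (with $\ell > 1$) by the pair $(s, \ell)$, and to define, for the given test sets $A$ and $B$, the map $F \colon \Omega \to \R^2$, where $\Omega = \{(s,\ell) : \ell > 1\}$ is an open half-plane (hence path connected), by $F(s,\ell) = (\leb([s,s+\ell] \cap A), \leb([s,s+\ell] \cap B))$. Reconstruction using $A, B$ would mean exactly that $F$ is injective. Our goal is therefore to show that $F$ cannot be injective. The function $F$ is Lipschitz (each coordinate changes by at most the $\leb$-measure of a symmetric difference of the intervals, which is controlled by the change in the endpoints), hence continuous and, by Rademacher's theorem, differentiable almost everywhere on $\Omega$. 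So Lemma~\ref{findreference} applies: if $F$ were injective, then $\det F'$ would have constant sign (among the points of differentiability where it is defined), i.e.\ $\det F' \ge 0$ a.e., or $\det F' \le 0$ a.e.

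The key computation is to evaluate $\det F'$ at a point of differentiability. Writing $\alpha(x) = \chi_A$, $\beta(x) = \chi_B$ and abbreviating the ``density'' data, one computes formally
\[
\frac{\partial}{\partial s}\leb([s,s+\ell]\cap A) = \chi_A(s+\ell) - \chi_A(s), \qquad
\frac{\partial}{\partial \ell}\leb([s,s+\ell]\cap A) = \chi_A(s+\ell),
\]
and similarly for $B$; this is rigorous at Lebesgue points of $\chi_A$ and $\chi_B$ after replacing the pointwise values by the local densities $d_A(\cdot), d_B(\cdot) \in [0,1]$ (which exist a.e.). Hence, at a.e.\ point,
\[
\det F' = \det\begin{pmatrix} d_A(s+\ell) - d_A(s) & d_A(s+\ell) \\ d_B(s+\ell) - d_B(s) & d_B(s+\ell)\end{pmatrix}
= d_A(s)\,d_B(s+\ell) - d_B(s)\,d_A(s+\ell).
\]
The point is that this expression is \emph{antisymmetric} under swapping the two endpoints $s \leftrightarrow s+\ell$ in a suitable sense: more precisely, consider the involution $(s,\ell) \mapsto (s', \ell)$ that keeps $\ell$ fixed — this does not literally swap endpoints, so instead the plan is to exploit that the quantity $\det F'(s,\ell) = d_A(s) d_B(s+\ell) - d_B(s) d_A(s+\ell)$, viewed as a function of the pair of points $(p, q) = (s, s+\ell)$ with $q - p = \ell > 1$, changes sign when $(p,q)$ is replaced by $(q, p)$. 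So if we can find $\ell_0 > 1$ and two values $s_1, s_2$ with $s_1 + \ell_0$ and $s_2$ "interchanged'' — concretely, if we can arrange points $p < q$ and $p' < q'$ with $q - p = q' - p' > 1$ such that $\{d_A(p), d_B(p), d_A(q), d_B(q)\}$ and $\{d_A(p'), \dots\}$ force $\det F'$ to take both a positive and a negative value on a positive-measure set — we contradict the constant-sign conclusion.

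The main obstacle — and the place requiring real care — is that $\det F'$ could be identically zero a.e., in which case Lemma~\ref{findreference} gives no contradiction and $F$ might still be injective. So the heart of the argument must be a separate lemma: \emph{if $\det F' = 0$ a.e.\ on $\Omega$, then $F$ is not injective.} Here the plan is to show that $\det F' = 0$ a.e.\ forces a rigid structure on $A$ and $B$ — essentially that $d_A$ and $d_B$ satisfy $d_A(p) d_B(q) = d_A(q) d_B(p)$ for a.e.\ pair $(p,q)$ with $q - p > 1$, which (via a Fubini / density-point argument) means $d_A$ and $d_B$ are proportional "at distance $> 1$''. One then argues that in this degenerate case the two coordinate functions $s \mapsto \leb([s,s+\ell]\cap A)$ and $s \mapsto \leb([s,s+\ell]\cap B)$ are, for each fixed $\ell > 1$, related by an affine relation with $\ell$-dependent but essentially one-parameter data, so that the image $F(\Omega)$ is (locally) contained in a one-dimensional set, which cannot happen for a continuous \emph{injective} map from the two-dimensional $\Omega$ by invariance of domain. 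Combining the two cases (generic: Lemma~\ref{findreference}; degenerate: dimension drop) yields that $F$ is never injective, proving the theorem, and since the whole argument only ever used intervals with $\ell > 1$, the stronger "length bigger than $1$'' statement comes for free.
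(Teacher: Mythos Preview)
Your overall strategy---parametrize intervals, define the Lipschitz map $F$ to $\R^2$, compute its Jacobian via densities, and invoke Lemma~\ref{findreference}---is exactly the paper's. The Jacobian formula you obtain (up to a sign slip) is also the paper's. The genuine gap is in how you produce two points where $\det F'$ has opposite signs.

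Your ``antisymmetry'' observation, that the expression $g(p,q)=d_A(p)d_B(q)-d_B(p)d_A(q)$ satisfies $g(q,p)=-g(p,q)$, is correct but does not help: the pair $(q,p)$ has negative length and lies outside $\Omega$, so you cannot simply swap endpoints. You then say ``if we can arrange points \dots'' but give no mechanism for doing so; this is precisely the heart of the argument, and it is missing. Your subsequent case split (generic vs.\ $\det F'\equiv 0$) is also not how the paper proceeds, and your handling of the non-degenerate case never leaves the realm of wishful thinking.

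The paper fills this gap by a preliminary structural step that you have not identified. Assuming $F$ injective, it argues that for any two intervals $I_1,I_2$ at distance greater than~$1$, none of $A,B,A^c,B^c,A\triangle B$ can be null on both $I_1$ and $I_2$---otherwise $F$ would map the rectangle $I_1\times I_2\subset U$ continuously and injectively into a (horizontal, vertical, or diagonal) line, which is impossible. Hence all five sets have positive measure in every interval of length greater than~$1$. From $\lambda(A\setminus B)>0$ (say) one picks a density point $z$ with $d_A(z)=1$, $d_B(z)=0$; since $B$ has positive measure on both sides of $z$ at distance $>1$, one then picks $u<z-1$ and $v>z+1$ with $d_B(u)=d_B(v)=1$. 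At the two domain points $(u,z)$ and $(z,v)$ the Jacobian determinants are $+1$ and $-1$, contradicting Lemma~\ref{findreference}. No separate ``$\det F'\equiv 0$'' case is needed. (Your instinct for that case---area formula gives measure-zero image, contradicting invariance of domain---is fine, but it becomes superfluous once the structural step is in place.)
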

\begin{proof}
Suppose to the contrary that $A$ and $B$ reconstruct
an interval of length bigger than $1$. 
Let $U=\{(x,y)\in\R^2: y-x>1\}$. Let $f:U\to [0,\infty)^2$ be defined by $$f((x,y))=(\lambda(A\cap [x,y]), \lambda(B\cap[x,y])).$$ The map $f$ is Lipschitz, and since $A$ and $B$ reconstruct, it is also injective.

Let $d_H(x)=\lim_{r\to 0+}\la(H\cap [x-r,x+r])/2r$ 
denote the density of a set $H$ at a point $x$ if the limit exists.  
Suppose that $y-x>1$ and $d_A(x), d_B(x), d_A(y), d_B(y)$ all exist. Using the $o$ notation,
\begin{align*}
f(x+t_x, y+t_y)= (&\lambda(A\cap[x,y])- d_A(x)t_x + d_A(y)t_y + o(t_x) + o(t_y), \\
 &\lambda(B\cap[x,y])- d_B(x)t_x + d_B(y)t_y + o(t_x) + o(t_y) ).
\end{align*}
This implies that $f$ is differentiable at $(x,y)$ and its derivative (Jacobian) is
$$ \left(
\begin{array}{cc}
-d_A(x) & d_A(y)  \\
-d_B(x) & d_B(y) \\
\end{array} \right).
$$

Let $I_1$ and $I_2$ be two non-empty intervals such that their distance is bigger
than $1$ and $I_1$ is on the left-hand side of $I_2$. 
Then none of 
$A, B, A^c, B^c$ and $A\triangle B$ 
can have zero measure intersection with both of $I_1$ and $I_2$,
since otherwise $f$ maps $I_1 \times I_2 \su U$ injectively and continuously 
into a (vertical, horizontal or diagonal) line, which is impossible.
This implies that all of $A, B, A^c, B^c$ and $A\triangle B$ must have positive measure in any interval of length bigger than $1$.

In particular, $\lambda(A \triangle B)>0$ and both $A$ and $B$ have 
positive measure in every halfline.

Since $\lambda(A \triangle B)>0$, we have $\lambda(A\setminus B)>0$ or
$\lambda(B\setminus A)>0$. We may suppose that the first one holds.
Recall that Lebesgue's density theorem states that 
the density of a measurable set is $1$
at almost all of its points and $0$ at almost all of the points of its 
complement.
Since $\lambda(A\setminus B)>0$, this implies that there exists a point $z$
for which $d_{A\sm B}(z)=1$. 
Then $d_A(z)=1$ and $d_B(z)=0$.
Since $B\cap (-\infty,z-1)$ and $B\cap (z+1,\infty)$ have
positive measure, we can pick $u<z-1$ and $v>z+1$
such that $d_B(u)=d_B(v)=1$ and both of $d_A(u)$ and $d_A(v)$ exist. 

Then
$$ f'(z,u) = \left(
\begin{array}{cc}
-1 & d_A(u)  \\
0 & 1 \\
\end{array} \right)
\quad \text{and} \quad
 f'(v,z) = \left(
\begin{array}{cc}
-d_A(v) & 1  \\
-1 & 0 \\
\end{array} \right),
$$
thus $\det f'(z,u)=-1$, $\det f'(v,z)=1$. 
This contradicts Lemma~\ref{findreference}.
\end{proof}

In Corollary~\ref{c:concrete} we will see that $5$ test sets are enough.
We do not know whether $3$ or $4$ are enough or not.

\section{Reconstruction of a translate of a fixed function}

As it is explained in the Introduction, for getting positive results
about the reconstruction of a translate of
a fixed set in $\R^d$ ($d\ge 2$), it will be useful to get 
results about the reconstruction of a translate of a given $\R\to\R$ function
using $1$ test set.

To reconstruct a translate of a fixed function, 
the following definition will be crucial.

\begin{defi}
Let $f:\R\to\R$ be an $L^1$ function and $\eps > 0$. Define
\[
K(\eps,f) =
 \inf \left\{ \Var(g) : g \textrm{ is compactly supported,} \, \norm{f-g}_1 <\eps \right\},
\]
where $\Var(g)$ denotes the total variation of $g$.
\end{defi}

Clearly, $K(\eps, f)$ is monotone in $\eps$. Also, $K(\eps,f)$ is always
finite as the piecewise constant functions of compact support are dense in $L^1$.

The following lemma shows that we can replace functions of bounded variation
by $C^1$ functions.

\bl
\lab{l:C^1}
Let $f:\R\to\R$ be an $L^1$ function with $\supp(f) \su [-1,1]$ and $\eps > 0$. Then
\[
K(\eps,f)= \inf \left\{ \Var(g) : g \in C^1,
  \supp(g) \su [-1,1], \,\norm{f-g}_1 <\eps \right\}. 
\] 
\el

Note that if $g \in C^1$ then $\Var(g) = \norm{g'}_1$.

\bp
It suffices to prove that if $g$ is of bounded variation with $\supp(g) \su [-1,1]$ and $\eps > 0$ then
there exists a $g_1 \in C^1$ with $\supp(g_1) \su [-1,1], \norm{g - g_1}_1 < \eps$ and $\Var(g_1) \le \Var(g)$. 

Let us first assume instead that $g$ is constant on $(- \infty, -1)$ and $(1, \infty)$, and it is also monotone. It is not hard to find a piecewise constant monotone function $g_0$ such that $\norm{g - g_0}_1 < \eps$. Then clearly $\Var(g_0) = \Var(g)$. Finally, we can easily approximate $g_0$ by a monotone $g_1 \in C^1$ such that $\norm{g_0 - g_1}_1 < \eps$ and $\Var(g_1) = \Var(g)$. 

Let now $g$ be a general function of bounded variation with $\supp(g) \su [-1,1]$. It is well-known that it can be decomposed as $g = g_+ - g_-$, where $g_+$ and $g_-$ are non-decreasing and $\Var(g) = |g_+(1) - g_+(-1)| + |g_-(1) - g_-(-1)|$ (indeed, let $g_+(x)$ be the positive variation of $g$ on $[-1, x]$). Applying the above approximation gives the result.
\ep

Recall that $f * g$ stands for the convolution of the two functions, and also that a function $f$ is locally absolutely continuous iff there exists a
locally $L^1$
function  $f^*$ such that $f(y) - f(x)= \int_x^y f^*(t) \,dt$ for every $x, y \in \RR$. Moreover, in that case $f^* = f'$ almost everywhere.
The following lemma is rather well-known, but we were unable to find a suitable reference so we include a proof.

\bl
\lab{l:conv}
Let $f : \RR \to \RR$ be locally absolutely continuous, $g : \RR \to \RR$ be locally $L^1$ and assume that one of them is compactly supported. Then $f * g$ is also locally absolutely continuous and $(f * g)' = f' * g$ almost everywhere. Moreover, if $g$ is locally $L^\infty$ then $f*g$ is $C^1$ and $(f * g)' = f' * g$ everywhere.
\el

\bp 
Since we are only interested in the local behaviour of $f*g$, and one of them is compactly supported, we may actually assume (using the formula defining $f*g$) that both of them are compactly supported. This justifies the use of Fubini's Theorem in the following computation. 
\[
(f*g)(y) - (f*g)(x) = \int_\RR [f(y-u) - f(x-u)] g(u) \,du = \int_\RR \int_{x-u}^{y-u} f'(t) \,dt\  g(u) \,du =
\]
\[
\int_\RR \int_x^y f'(t-u) \,dt \ g(u) \,du = \int_x^y  \int_\RR f'(t-u)  g(u) \,du \,dt =  \int_x^y  (f' * g)(t) \,dt,
\]
hence we are done with the proof of the first statement. If $g$ is also in $L^\infty$, then $f'*g$ is the convolution of an $L^1$ and an $L^{\infty}$ function, so it is continuous. The above equation shows that $f*g$ is the integral 
of this continuous function,
which yields the remaining statements.
\ep

\begin{lemma}\label{l:77}
Let $f : \RR \to \RR$ be a non-negative 
absolutely continuous function with $\supp(f)\su[-1,1]$ and $\int_{\R}f=1$.
Let $a>0$ and $f_a(x)=f(x/a)/a$. 
Let $\Phi:\R\to (0,1)$ be a $C^1$ function with $\Phi'>0$, 
and let $\Psi:\R\to[0,1]$ be a measurable function.
Then for any  $\eps>0$ and $x\in\R$ we have
$$
(f_a * \Psi)'(x) \ge  \min_{[x-a,x+a]} \Phi' -\frac{2\eps}{a} - 
\frac{K(\eps, f')}{a^2} \intnorm{\Psi-\Phi}{a}.
$$
\end{lemma}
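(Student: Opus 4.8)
The plan is to compare $(f_a * \Psi)'$ with $(f_a * \Phi)'$ and then estimate the latter from below. By Lemma~\ref{l:conv}, since $f_a$ is absolutely continuous with compact support, we have $(f_a * \Psi)' = f_a' * \Psi$ and $(f_a * \Phi)' = f_a' * \Phi$ everywhere; moreover $f_a' * \Phi$ is continuous. So it suffices to bound $(f_a' * \Phi)(x)$ from below by $\min_{[x-a,x+a]} \Phi'$, up to an error, and then bound $|(f_a' * \Psi)(x) - (f_a' * \Phi)(x)|$ by the remaining terms.

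First I would handle the main term. Write $(f_a' * \Phi)(x) = \int f_a'(u) \Phi(x-u)\,du$. Since $\int f_a' = 0$ (as $f$ has compact support and $\int f = 1$ means $f_a$ integrates to $1$, but more relevantly $f_a(\pm\infty) = 0$), and more precisely $f_a'(u) = f'(u/a)/a^2$ is supported on $[-a,a]$, integrate by parts: $(f_a' * \Phi)(x) = \int f_a(u)\Phi'(x-u)\,du = (f_a * \Phi')(x)$. Now $f_a \ge 0$, $\int f_a = 1$, and $f_a$ is supported on $[-a,a]$, so $(f_a * \Phi')(x) = \int f_a(u)\Phi'(x-u)\,du \ge \min_{u \in [-a,a]} \Phi'(x-u) = \min_{[x-a,x+a]}\Phi'$. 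That gives the first term with no error at all; in fact this shows $(f_a * \Phi)'(x) \ge \min_{[x-a,x+a]}\Phi'$ exactly.

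Next, the error term $(f_a' * (\Psi - \Phi))(x) = \int f_a'(u)(\Psi-\Phi)(x-u)\,du$. Here I would use the definition of $K(\eps, f')$: pick $g$ compactly supported with $\norm{f' - g}_1 < \eps$ and $\Var(g)$ close to $K(\eps, f')$; by Lemma~\ref{l:C^1} we may even take $g \in C^1$ with $\supp g \su [-1,1]$ and $\Var(g) = \norm{g'}_1$. Rescale: $g_a(u) = g(u/a)/a^2$ approximates $f_a'$ with $\norm{f_a' - g_a}_1 = \norm{f' - g}_1/a < \eps/a$ — wait, I need to be careful with the scaling of the $L^1$ norm of the derivative versus of $f'$ itself. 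Let me instead split $(f_a' * (\Psi-\Phi))(x) = ((f_a' - g_a) * (\Psi-\Phi))(x) + (g_a * (\Psi-\Phi))(x)$. Since $|\Psi - \Phi| \le 1$, the first piece is at most $\norm{f_a' - g_a}_1 = \frac{1}{a}\norm{f' - g}_1 < \eps/a$ in absolute value; I want the stated $2\eps/a$, so being slightly loose here is fine and in fact suggests one accounts for two such terms or takes $\Var(g) \le K(\eps,f') $ after a limiting argument. For the second piece, write $h(x) = \int_x^t (\Psi - \Phi)$ type quantity — more precisely integrate by parts again: $(g_a * (\Psi - \Phi))(x) = \int g_a(u) (\Psi - \Phi)(x - u)\,du$, and since $g_a = G_a'$ where $G_a(u) = \frac{1}{a}g(u/a)$... hmm, the cleanest route: $\int g_a(u)(\Psi-\Phi)(x-u)\,du = -\int g_a'(u) \left(\int_0^u (\Psi-\Phi)(x - s)\,ds\right) du$ after integrating by parts (boundary terms vanish by compact support), and $\left|\int_0^u (\Psi - \Phi)(x-s)\,ds\right| \le \intnorm{\Psi - \Phi}{a}$ for $|u| \le a$ by substituting and using the definition of the macro $\intnorm{\cdot}{\cdot}$ (which is $\sup_{t \in [x-a,x+a]}|\int_x^t(\Psi-\Phi)|$). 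Then $\int |g_a'(u)|\,du = \frac{1}{a^2}\int|g'(v)|\,dv = \frac{1}{a^2}\Var(g)$, giving the bound $\frac{K(\eps,f')}{a^2}\intnorm{\Psi-\Phi}{a}$ after letting $\Var(g) \to K(\eps,f')$.

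\textbf{The main obstacle} I anticipate is bookkeeping the rescaling factors correctly: each derivative introduces a factor $1/a$, so $f_a'$ carries $1/a^2$ relative to $f'$ in sup-type estimates but $1/a$ in $L^1$, and the "antiderivative of $\Psi - \Phi$" picks up a length-$a$ interval, which is exactly why the final bound has $1/a^2$ multiplied by an $\intnorm{\cdot}{a}$ term (which itself scales like $a$ when $\Psi - \Phi$ is bounded, recovering a $1/a$ overall). I would double-check the integration-by-parts boundary terms vanish (they do: $f_a$ and $g_a$ have compact support in $[-a,a]$), that Lemma~\ref{l:conv} genuinely applies (it does: $f_a$ locally absolutely continuous and compactly supported, $\Psi, \Phi$ locally $L^1$), and that the two $\eps/a$-type contributions combine to the claimed $2\eps/a$ — if only one such term appears naturally, I would absorb a second copy by noting $K(\eps, f')$ is approached, not attained, or simply replace the single $\eps/a$ bound by the weaker $2\eps/a$ to match the statement. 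The positivity $f_a \ge 0$ and normalization $\int f_a = 1$ are used only in the trivial lower bound for the main term, so no subtlety there.
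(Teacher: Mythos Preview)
Your approach is essentially the same as the paper's: decompose $(f_a*\Psi)'=(f_a*\Phi')+(f_a'-g_a)*(\Psi-\Phi)+g_a*(\Psi-\Phi)$, bound the first term below using $f_a\ge 0$, $\int f_a=1$, $\supp f_a\subset[-a,a]$, bound the second by $\|f_a'-g_a\|_1\cdot\|\Psi-\Phi\|_\infty$, and integrate the third by parts to produce $\|g_a'\|_1\cdot\intnorm{\Psi-\Phi}{a}$, then let $\Var(g)\downarrow K(\eps,f')$. Your scaling computations are correct. The only point of confusion --- the ``missing'' factor of $2$ --- is resolved by noting that the paper uses the cruder bound $|\Psi-\Phi|\le 2$ (valid but not sharp, since in fact $|\Psi-\Phi|\le 1$); your tighter estimate $\eps/a$ is simply stronger than the stated $2\eps/a$, so there is no second term to hunt for.
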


\bp
We denote by $f_a'$ the $L^1$ function for which $f_a(x)=\int_{-\infty}^x f_a'(t) \,dt$. Clearly $f_a'=f'(x/a)/a^2$.
Since $\supp(f)\su[-1,1]$, the function $f_a$ is supported in $[-a,a]$.

Fix $\delta>0$. By Lemma~\ref{l:C^1}, we can 
choose a $C^1$ function $g_0$ supported in $[-1,1]$ such that 
$\norm{f'-g_0}_1<\eps$ and $\norm{g_0'}_1\le K(\eps, f') +\delta$. 
Let $g(x)=g_0(x/a)/a^2$ (thus $g'(x)=g_0'(x/a)/a^3$).
Then we have
\begin{equation}
\label{two}
\norm{f_a'-g}_1<\eps/a \quad \textrm{and} \quad
\norm{g'}_1 \le \frac{K(\eps, f')+\delta}{a^2}.
\end{equation}

Using Lemma \ref{l:conv} several times, we obtain
\begin{align*}
(f_a * \Psi)'(x) & = (f_a * \Phi)'(x) + (f_a * (\Psi-\Phi))'(x) = \\
& = (f_a * \Phi')(x) + (f_a' * (\Psi-\Phi))(x) = \\
&=  (f_a * \Phi')(x) + ((f_a'-g) * (\Psi-\Phi))(x) +  (g * (\Psi-\Phi))(x)  = \\
&=  (f_a * \Phi')(x) + ((f_a'-g) * (\Psi-\Phi))(x) + \left (g' * \int (\Psi-\Phi)\right)(x), 
\end{align*}
where $\left(\int (\Psi-\Phi)\right)(t)=\int_{t_0}^t (\Psi(s)-\Phi(s)) ds$
for an arbitrary fixed $t_0$. 
Using $\int_{\R}f_a=1$,  
$|\Psi-\Phi|\le 2$ and that $f_a$ and $g'$ are supported 
in $[-a,a]$, and then (\ref{two}),
this implies that
\begin{align*}
(f_a * \Psi)'(x) & \ge  \min_{[x-a,x+a]} \Phi' - 2 \norm{f_a'-g}_1 - \norm{g'}_1 \intnorm{\Psi-\Phi}{a} \\
& \ge  \min_{[x-a,x+a]} \Phi' -\frac{2\eps}{a} - 
\frac{K(\eps, f')+\delta}{a^2} \intnorm{\Psi-\Phi}{a}.
\end{align*}
Letting $\delta\to 0$ we get the claimed inequality.
\ep

In this section our goal is to reconstruct a translate of a given 
non-negative not identically zero compactly supported absolutely 
continuous function $f$ on $\R$ by a measurable set $T$ by choosing
$T$ so that $\int_T f(x-b)dx$ is strictly increasing in $b$. 
Note that $\int_{\R} \Phi(x)f(x-b)dx$ is strictly increasing for any
strictly increasing $\Phi(x)$, and by denoting the characteristic 
function of $T$ by $\chi_T$, 
we have $\int_T f(x-b)dx=\int_{\R} \chi_T f(x-b)dx$. 
Therefore our task is to approximate a given $\Phi$ by a characteristic
function, so that their integrals are close. This will be done in the
following two lemmas.    

\begin{lemma}\label{l:Marci}
Let $\Phi:[0,1]\to (0,1)$ be a $C^1$ function with $\Phi'>0$, and let $\delta
> 0$.
Then we can choose $T\su[0,1]$ as a finite union of intervals so that 
\beq\label{delta}
\abs{\int_{a}^b (\chi_T - \Phi)} \le \de \text{ for any } a,b\in[0,1]
\eeq
and
\beq\label{zero}
\int_{0}^{1} (\chi_T - \Phi)=0.
\eeq
\end{lemma}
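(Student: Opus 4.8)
The plan is to approximate $\Phi$ from below by a step function and then turn each ``step'' into a suitable union of intervals. First I would fix a large integer $N$ and partition $[0,1]$ into the subintervals $J_k = [(k-1)/N, k/N]$ for $k=1,\dots,N$. On each $J_k$ let $m_k = \min_{J_k}\Phi$ and $M_k = \max_{J_k}\Phi$; since $\Phi'$ is continuous on the compact interval $[0,1]$ it is bounded, say by $L$, so $M_k - m_k \le L/N$. Inside each $J_k$ I would place a single subinterval $T_k \su J_k$ of length $c_k/N$ for a carefully chosen $c_k \in (0,1)$, positioned (say) at the left end of $J_k$; then $T = \bigcup_k T_k$ is a finite union of intervals, and on $J_k$ we have $\chi_T \equiv 1$ on a set of measure $c_k/N$ and $\chi_T\equiv 0$ elsewhere.

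The key computation is to control $\int_a^b(\chi_T - \Phi)$. On a full subinterval $J_k$,
\[
\int_{J_k}(\chi_T - \Phi) = \frac{c_k}{N} - \int_{J_k}\Phi,
\]
and since $\int_{J_k}\Phi \in [m_k/N, M_k/N]$, choosing $c_k := N\int_{J_k}\Phi$ (which lies in $[m_k, M_k] \su (0,1)$, so $T_k$ is well-defined and of length $< 1/N = |J_k|$) makes this integral exactly $0$. This immediately gives \eqref{zero} by summing over $k$. For \eqref{delta}, write $[a,b]$ as a union of whole subintervals $J_k$ plus at most two partial pieces at the ends. The whole-subinterval contributions all vanish by the choice of $c_k$, so $\abs{\int_a^b(\chi_T-\Phi)}$ is bounded by the contributions of the two end pieces, each of which is an integral of $\chi_T - \Phi$, a function bounded by $1$ in absolute value, over a set of measure at most $1/N$. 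Hence $\abs{\int_a^b(\chi_T-\Phi)} \le 2/N$, and choosing $N > 2/\delta$ finishes the proof.

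The only mild subtlety — and the step I would be most careful about — is making sure the interval pieces $T_k$ can genuinely be chosen with the prescribed lengths and that the constant $c_k$ never hits the forbidden endpoints $0$ or $1$; this is exactly where the hypotheses $\Phi > 0$ (so $c_k \ge m_k > 0$) and $\Phi < 1$ (so $c_k \le M_k < 1$, hence $|T_k| = c_k/N < 1/N$, leaving room inside $J_k$) are used. Everything else is a routine splitting-into-whole-and-partial-blocks estimate, and in fact one does not even need $\Phi \in C^1$ here — continuity of $\Phi$ suffices for this lemma — but since we have it, the uniform bound $M_k - m_k \le L/N$ is available though not strictly necessary for the argument above. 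No harmonic analysis is needed; the lemma is purely elementary.
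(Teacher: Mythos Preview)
Your proof is correct and takes a genuinely different route from the paper's. The paper proceeds greedily: it partitions $[0,1]$ into $n$ equal blocks and, moving left to right, includes each whole block in $T$ or not so as to keep the running error $\int_0^{m/n}(\chi_T-\Phi)$ in $[0,1/n]$; at the end it trims an initial segment $[0,h]$ to force \eqref{zero}, and then deduces \eqref{delta} with the bound $4/n$. You instead place inside each block $J_k$ a subinterval of length exactly $\int_{J_k}\Phi$, so that the integral over every block is zero outright; the error then comes only from at most two boundary fragments and is bounded by $2/N$. Your version is slightly cleaner (no correction step, a better constant) and, as you observe, uses only that $\Phi$ is integrable with values in $(0,1)$ --- the $C^1$ and monotonicity hypotheses are not needed here. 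The paper's approach has the minor feature that the resulting $T$ is a union of whole grid blocks except for one adjusted piece, but this plays no role downstream.
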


\bp
Choose a positive integer $n$ so that $4/n<\de$.
Let $T_0=\emptyset, T_1=[0,1/n]$ and by induction construct $T_m\su[0,m/n]$ 
for $m=1,\ldots,n$  
so that $T_m=T_{m-1}$ or $T_m=T_{m-1}\cup[(m-1)/n,m/n]$ and
$0 \le \int_0^{m/n} (\chi_{T_m} - \Phi) \le 1/n$. 
Then letting $h=\int_0^1 (\chi_{T_n} - \Phi)$ we have $0\le h \le 1/n$.
Since $T_n\supset T_1=[0,1/n]$, by letting $T=T_n\sm [0,h]$ we 
have (\ref{zero}) and
$-1/n \le \int_0^{m/n} (\chi_{T_m} - \Phi) \le 1/n$ for any $m=1,\ldots,n-1$.
Then $-2/n \le \int_0^{b} (\chi_{T_m} - \Phi) \le 2/n$ for any $b\in[0,1]$,
which implies (\ref{delta}) since $4/n<\de$. 
\ep

Recall that $\lfloor x \rfloor$ denotes the integer part of $x$.

\begin{lemma}\label{l:88}
Let $\Phi:\R\to (0,1)$ be a $C^1$ function with $\Phi'>0$ and 
$\de:\{0,1,2,\ldots\} \to (0,1)$. 
Then  we can choose $T$ as a locally finite union of intervals so that 
\beq\label{defA}
\abs{\int_{a}^b (\chi_T - \Phi)} \le \de(\lfloor |a| \rfloor) + \de(\lfloor
|b| \rfloor)
\eeq
for any $a,b\in\R$.
\end{lemma}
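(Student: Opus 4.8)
The plan is to reduce the global statement to Lemma~\ref{l:Marci} applied on each of the intervals $[n, n+1]$ and $[-n-1, -n]$ for $n = 0, 1, 2, \ldots$, and then glue the resulting pieces together. On each such interval $J$ of length $1$, the restriction $\Phi|_J$ is a $C^1$ function with positive derivative, mapping into $(0,1)$, so after an affine reparametrization of the domain it satisfies the hypotheses of Lemma~\ref{l:Marci}. Feeding in the tolerance $\delta(n)$ (the value of the given sequence at the index corresponding to $J$), we obtain a finite union of intervals $T_J \su J$ with $\left| \int_a^b (\chi_{T_J} - \Phi) \right| \le \delta(n)$ for all $a, b \in J$ and, crucially, $\int_J (\chi_{T_J} - \Phi) = 0$. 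We then set $T = \bigcup_J T_J$, a locally finite union of intervals since it meets each bounded interval in only finitely many of the $T_J$'s.

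The key point is that the exact cancellation \eqref{zero} on each unit interval makes the errors telescope rather than accumulate. Suppose first $0 \le a < b$ and write $m = \lfloor a \rfloor$, $M = \lfloor b \rfloor$. Split $\int_a^b (\chi_T - \Phi) = \int_a^{m+1}(\chi_T - \Phi) + \sum_{k=m+1}^{M-1}\int_k^{k+1}(\chi_T - \Phi) + \int_M^b (\chi_T - \Phi)$. Every middle term vanishes by the normalization \eqref{zero} on $[k, k+1]$, so only the two end terms survive. For the term near $a$, note $\int_a^{m+1}(\chi_T - \Phi) = \int_m^{m+1}(\chi_T - \Phi) - \int_m^a (\chi_T - \Phi) = -\int_m^a (\chi_T - \Phi)$, which is bounded in absolute value by $\delta(m) = \delta(\lfloor |a| \rfloor)$ by the estimate \eqref{delta} on $[m, m+1]$; similarly the term near $b$ is bounded by $\delta(M) = \delta(\lfloor |b|\rfloor)$. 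Adding gives \eqref{defA}. The cases where $a$ or $b$ is negative, or where $a$ and $b$ lie in the same unit interval, are handled the same way, splitting at integer points and using that the integral over each complete unit interval in between is zero; when $a < 0 < b$ one simply notes the integral over $[-1,0]$ through $[0,1]$ etc. all cancel, leaving only the two fractional end pieces, each controlled by the appropriate $\delta$ value.

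I do not expect a serious obstacle here: Lemma~\ref{l:Marci} does the real work, and the remaining content is the telescoping bookkeeping together with the observation that local finiteness of $T$ follows because each of the countably many pieces $T_J$ lives in a distinct unit interval. The one point requiring a little care is the orientation convention in the edge cases (ensuring that when $a$ and $b$ straddle $0$ or land in the same interval, the split is done consistently so that the bound comes out as $\delta(\lfloor |a|\rfloor) + \delta(\lfloor |b|\rfloor)$ and not something larger), but this is routine once the telescoping identity above is set up, using \eqref{delta} on the single relevant unit interval in the "same interval" case.
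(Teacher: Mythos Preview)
Your proposal is correct and follows exactly the same approach as the paper's proof: apply Lemma~\ref{l:Marci} on each unit interval $[k,k+1]$ and $[-k-1,-k]$ with tolerance $\delta(k)$, and take $T$ to be the union of the resulting pieces. The paper's proof is actually just a one-sentence reference to this construction, whereas you have additionally spelled out the telescoping argument verifying \eqref{defA}, which the paper leaves implicit.
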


\bp
Apply Lemma~\ref{l:Marci} on $[k,k+1]$ and on $[-k-1,-k]$
(instead of $[0,1]$) and $\de=\de(k)$ for each $k=0,1,\ldots$ and 
let $T$ be the union of the sets we obtain. 
\ep

Now we can prove the main result of the section.

\begin{theorem}\label{t1}
Let $f : \RR \to \RR$ be a non-negative not identically zero compactly supported absolutely continuous function. Then a translate of $f$ can be reconstructed using one test set; that is,
there exists a measurable set $T$ such that if $b \neq b'$ then $\int_T f(x-b) \,dx \neq \int_T f(x-b') \,dx$.

In fact, $\int_T f(x-b) \,dx$ is strictly increasing in $b$, and we can choose $T$ to be a locally finite union of intervals.
\end{theorem}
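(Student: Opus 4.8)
The plan is to normalize $f$ and then apply Lemma~\ref{l:77} together with Lemma~\ref{l:88} to choose the test set $T$. After scaling and translating we may assume $\supp(f)\su[-1,1]$ and $\int_\R f=1$, so that $f=f_a$ with $a=1$ in the notation of Lemma~\ref{l:77}; since multiplying $f$ by a positive constant and translating the variable only amount to harmless reparametrizations of $b$, a reconstruction in the normalized case gives one in general. Writing $\chi_T$ for the characteristic function of the set $T$ we want to construct, the quantity $\int_T f(x-b)\,dx$ equals $(\tilde f * \chi_T)(b)$ up to a reflection $\tilde f(x)=f(-x)$, so it suffices to arrange that $b\mapsto (f*\chi_T)(b)$ be strictly increasing, where now I relabel so that the convolution is in the right form for Lemma~\ref{l:77} with $\Psi=\chi_T$.

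The key idea is: fix any $C^1$ function $\Phi:\R\to(0,1)$ with $\Phi'>0$ (for concreteness something like $\Phi(x)=\tfrac12+\tfrac1\pi\arctan x$), and use Lemma~\ref{l:88} to build $T$ as a locally finite union of intervals so that $\left|\int_a^b(\chi_T-\Phi)\right|$ is small in a controlled, $a,b$-dependent way. Then Lemma~\ref{l:77} (applied with $a=1$) gives, for every $\eps>0$ and every $x$,
\[
(f*\chi_T)'(x)\ \ge\ \min_{[x-1,x+1]}\Phi'\ -\ 2\eps\ -\ K(\eps,f')\,\sup_{t\in[x-1,x+1]}\left|\int_x^t(\chi_T-\Phi)\right|.
\]
Here $\min_{[x-1,x+1]}\Phi'$ is a fixed positive number $c(x)>0$ depending continuously on $x$. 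The point is that $\eps$ can be chosen as small as we like, which makes the second term $2\eps$ negligible, and then for that fixed $\eps$ the coefficient $K(\eps,f')$ is a fixed finite constant (finiteness was noted right after the definition of $K$); so we just need the $\sup$ of the integral deviations near $x$ to be smaller than, say, $c(x)/(2K(\eps,f'))$.

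To make this work uniformly in $x\in\R$ we let $\eps$ depend on (an integer bound for) $|x|$: choose a sequence $\eps_k\downarrow 0$ with $2\eps_k<\tfrac14\inf_{|x|\le k+1}\min_{[x-1,x+1]}\Phi'$, set $c_k=\inf_{|x|\le k+1}\min_{[x-1,x+1]}\Phi'>0$ (positive since $\Phi'$ is continuous and positive, hence bounded below on compacta), and then define the error budget $\de(k)=\min\{\,\tfrac12,\ c_k/(8K(\eps_k,f')+8)\,\}\in(0,1)$. Feed this $\de$ into Lemma~\ref{l:88} to obtain $T$ with $\left|\int_a^b(\chi_T-\Phi)\right|\le\de(\lfloor|a|\rfloor)+\de(\lfloor|b|\rfloor)$. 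For a given $x$ with $\lfloor|x|\rfloor=k$ and $t\in[x-1,x+1]$ one has $\lfloor|t|\rfloor\le k+1$, so using monotonicity of $c_k$ and of $\de$ in the relevant direction (decreasing $c_k$, but $\de$ is defined from $c_k$ and $\eps_k$ so one checks $\de(k),\de(k+1)\le c_k/(8K(\eps_k,f'))$ after possibly also shrinking with $\eps_{k+1}$) we get $\sup_{t}\left|\int_x^t(\chi_T-\Phi)\right|\le \de(k)+\de(k+1)\le c_k/(4K(\eps_k,f'))$, whence
\[
(f*\chi_T)'(x)\ \ge\ c_k\ -\ 2\eps_k\ -\ K(\eps_k,f')\cdot\frac{c_k}{4K(\eps_k,f')}\ \ge\ c_k-\tfrac14 c_k-\tfrac14 c_k\ =\ \tfrac12 c_k\ >\ 0.
\]
Since $f*\chi_T$ is (locally) absolutely continuous by Lemma~\ref{l:conv} and its a.e.\ derivative is everywhere positive, it is strictly increasing; reversing the normalization gives that $b\mapsto\int_T f(x-b)\,dx$ is strictly increasing, hence injective, and $T$ is a locally finite union of intervals by construction.

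\textbf{Main obstacle.} The delicate point is the bookkeeping that lets a single set $T$ work for all $x\in\R$ simultaneously: $K(\eps,f')$ blows up as $\eps\to0$, so one cannot take $\eps$ uniformly small, and one must interleave the choice of $\eps_k$, the lower bounds $c_k$ on $\Phi'$, and the error budget $\de(k)$ so that on the $k$-th block the fixed (for that block) constant $K(\eps_k,f')$ is tamed by the even smaller interval-deviation $\de(k)$ coming from Lemma~\ref{l:88}. Getting the quantifiers in the right order — first fix $\Phi$, then for each $k$ fix $\eps_k$ hence $K(\eps_k,f')$, then demand $\de(k)$ small relative to that constant, then invoke Lemma~\ref{l:88} once — is the whole content; the rest is the soft observation that strict positivity of the a.e.\ derivative of a locally absolutely continuous function forces strict monotonicity.
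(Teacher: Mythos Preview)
Your argument is essentially the paper's own proof: normalize, fix a strictly increasing $C^1$ function $\Phi$, apply Lemma~\ref{l:77} with $a=1$ and $\Psi=\chi_T$, and use Lemma~\ref{l:88} to make the integral deviation $\sup_{t\in[x-1,x+1]}\bigl|\int_x^t(\chi_T-\Phi)\bigr|$ small enough, at each location, to beat the constant $K(\eps,f')$ for an $x$-dependent choice of $\eps$. The only slip is in the index bookkeeping---since your $\de(k)$ is decreasing, for $t\in[x-1,x+1]$ the worst value of $\de(\lfloor|t|\rfloor)$ is $\de(k-1)$, not $\de(k+1)$---but this is trivially repaired (e.g.\ by defining $\de(k)$ using $c_{k+1}$ and $\eps_{k+1}$, or, as the paper does, by working with a continuous decreasing budget $h(|x|)$ instead of a block decomposition).
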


\begin{proof}
Since $f$ is absolutely continuous, $f'$ exists almost everywhere, $f' \in L^1$ and $f(x)=\int_{-\infty}^x f'(t) \,dt$ for every $x \in \RR$.
We may suppose that $f$ (and $f'$) is supported in $[-1,1]$ and that $\int_\RR f=1$.

Let $\Phi:\R \to [0,1]$ be an arbitrary $C^1$ function with $\Phi'>0$, and $h:[0,\infty)\to (0,1)$ be an arbitrary decreasing continuous function (which we will specify later). 

By applying Lemma~\ref{l:88} to a sufficiently small function $\de$ 
we obtain a set $T$ such that 
\beq\label{intaf}
\intnorm{\chi_T-\Phi}{1} \le h(|x|) \textrm{ for every } x \in \R.
\eeq
We will complete the proof by proving 
that $f * {\chi_T}$ is strictly increasing. 
As this function is $C^1$ by Lemma \ref{l:conv}, it suffices to show that 
$(f * {\chi_T})'>0$ everywhere. 

Applying Lemma~\ref{l:77} to $\Psi=\chi_T$ and $a=1$, and using (\ref{intaf})
 we obtain
\begin{align*}
(f * {\chi_T})'(x) & 
\ge \min_{[x-1,x+1]} \Phi' - 2\eps - 
K(\eps, f') \intnorm{\chi_T-\Phi}{1} \\
& \ge \min_{[x-1,x+1]} \Phi' - 2\eps  - K(\eps, f') h(|x|). 
\end{align*}

Therefore, choosing $\eps=\eps(x)=1/4 \min_{[x-1,x+1]} \Phi'$,
we see that if we fix $h$ such that
$$h(|x|)\le \frac{\min_{[x-1,x+1]} \Phi'}{4K(\eps(x), f')}$$ 
for every $x\in\R$ then
$$(f*{\chi_T})'(x) \ge 1/4 \min_{[x-1,x+1]} \Phi'>0.$$
\end{proof}

\section{Reconstruction of a function of the form $f(\frac{x}{a}+b)$}

The reconstruction of a magnified copy of a fixed set in $\R^d$ ($d\ge 2$)
will be based on the following result.

\begin{theorem}\label{t2}
Let $f : \RR \to \RR$ be a non-negative not identically zero compactly supported absolutely continuous function. Suppose that
\beq\label{1/3}
\text{there exist } C_1, C_2 \text{ such that } \quad
K(\eps ,f')\le C_1\exp(C_2\eps^{-1/3}) \quad
\text{for every } \eps>0.
\eeq
Then there exists a measurable set $T$ (in fact, a locally finite union of intervals) such that $\int_T f(\frac{x}{a}+b)$ is strictly monotone in $b$ ($b\in\RR$) for every $a\ge 1$.
%
%
\end{theorem}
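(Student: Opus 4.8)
The plan is to reduce the assertion, exactly as in the proof of Theorem~\ref{t1}, to the positivity of $(f_a*\chi_T)'$ for one test set $T$, but now \emph{uniformly for all $a\ge1$}, and then to run the Theorem~\ref{t1} argument in this uniform form; the extra hypothesis~\eqref{1/3} is precisely what lets the uniform version go through. For the reduction, the substitution $y=x/a+b$ gives
\[
\int_T f\Big(\tfrac{x}{a}+b\Big)\,dx \;=\; a\,(f_a*\chi_{-T})(ab), \qquad f_a(x):=\tfrac1a f(x/a),
\]
so, since $a\ge1$, the left side is strictly monotone in $b$ as soon as $f_a*\chi_{-T}$ is strictly monotone, and $-T$ is a locally finite union of intervals precisely when $T$ is. Hence it suffices to construct a locally finite union of intervals $T$ with $(f_a*\chi_T)'(x)>0$ for every $x\in\R$ and every $a\ge1$; by Lemma~\ref{l:conv} the convolution is then $C^1$, so this forces strict monotonicity. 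As in Theorem~\ref{t1} we may also assume $\supp(f)\su[-1,1]$ and $\int_\R f=1$; a change of variables shows \eqref{1/3} survives this normalisation with altered constants.

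Next I would fix the auxiliary data. Take a small $c>0$ and let $\Phi$ be a $C^1$ primitive of the continuous, positive, integrable function $t\mapsto\frac{c}{(e+|t|)\log^2(e+|t|)}$, shifted so that $\Phi:\R\to(0,1)$. Then $\Phi'>0$ and, as $\Phi'$ decreases in $|t|$,
\[
M(x,a):=\min_{[x-a,\,x+a]}\Phi' \;=\; \frac{c}{(e+|x|+a)\,\log^2(e+|x|+a)}.
\]
By Lemma~\ref{l:88} choose a locally finite union of intervals $T$ with $\abs{\int_\alpha^\beta(\chi_T-\Phi)}\le\rho(\lfloor|\alpha|\rfloor)+\rho(\lfloor|\beta|\rfloor)$ for all $\alpha,\beta$, where $\rho:\{0,1,2,\dots\}\to(0,1)$ is decreasing with a decay rate to be pinned down last; this gives $\intnorm{\chi_T-\Phi}{a}\le\rho(\lfloor|x|\rfloor)+\rho\big(\lfloor\max(0,|x|-a)\rfloor\big)$. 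Applying Lemma~\ref{l:77} with $\Psi=\chi_T$ and $\eps=\tfrac14\,a\,M(x,a)$ (so $\tfrac{2\eps}{a}=\tfrac12 M(x,a)$) turns its conclusion into
\[
(f_a*\chi_T)'(x)\;\ge\;\tfrac12 M(x,a)\;-\;\frac{K(\eps,f')}{a^2}\,\intnorm{\chi_T-\Phi}{a},
\]
so everything reduces to proving $\frac{K(\eps,f')}{a^2}\,\intnorm{\chi_T-\Phi}{a}<\tfrac12 M(x,a)$ for all $x$ and all $a\ge1$.

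The verification splits according to whether $|x|\lesssim a$ or $|x|\gg a$, and the first case is the crux and the main obstacle. If $|x|\le2a$, then $\intnorm{\chi_T-\Phi}{a}\le2\rho(0)$, a fixed positive quantity that no choice of $T$ can shrink, since $[x-a,x+a]$ always ``contains the origin''. The point that saves the day is that, because $\Phi'$ was allowed to decay at the slow, barely-integrable rate $\tfrac1{t\log^2 t}$, in this range $a\,M(x,a)$ is only \emph{logarithmically} small, so $\eps\gtrsim1/\log^2(e+|x|+a)$; then \eqref{1/3} gives $K(\eps,f')\le C_1\exp\big(C'(\log(e+|x|+a))^{2/3}\big)$, which is \emph{sub-polynomial} in $|x|+a$. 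Since $a^2M(x,a)$ is of order $(|x|+a)/\log^2(|x|+a)$, nearly linear, the required inequality holds once $\rho(0)$ is a small enough absolute constant (using that $t\mapsto t^{-1}(\log t)^2\exp((\log t)^{2/3})$ is bounded on $[e,\infty)$). This is exactly where the exponent $\tfrac13$ is used: any exponent $<\tfrac12$ would do, but a power-type bound $K(\eps,f')\gtrsim\eps^{-\eta}$ would make $K$ polynomially large in $|x|+a$ and wreck the estimate. In the remaining case $|x|>2a$ one has $\intnorm{\chi_T-\Phi}{a}\le2\rho(m)$ with $m=m(x,a)$ comparable to $|x|+a$, genuinely small, while $\eps\ge\tfrac14 M(x,a)$ only gives $K(\eps,f')\le C_1\exp\big(C(e+|x|+a)^{1/3}(\log(e+|x|+a))^{2/3}\big)$; this is absorbed by demanding that $\rho$ decay faster than that, a legitimate choice since for each fixed value of $m$ the resulting upper bound on $\rho$ is a positive infimum over a bounded range of $|x|+a$. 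Thus a single, sufficiently fast-decreasing $\rho$ satisfies all constraints (including the constant-size one from the first case), giving $(f_a*\chi_T)'>0$ throughout, and the reduction then yields the theorem.
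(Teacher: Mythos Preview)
Your proof is correct and follows essentially the same route as the paper's: the same choice of $\Phi'\sim c/(|t|\log^2|t|)$, the same appeal to Lemmas~\ref{l:77} and~\ref{l:88}, the same case split $|x|\le 2a$ versus $|x|>2a$, and the same crux that in the first case condition~\eqref{1/3} forces $K(\eps,f')$ to grow only like $\exp\big(C(\log a)^{2/3}\big)$, hence sub-polynomially in $a$, so a single small value of $\rho(0)$ suffices; your explicit reduction $\int_T f(x/a+b)\,dx=a(f_a*\chi_{-T})(ab)$ is actually cleaner than the paper, which jumps straight to $(f_a*\chi_T)'>0$.

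One small slip in a side remark: your parenthetical that ``a power-type bound $K(\eps,f')\gtrsim\eps^{-\eta}$ would make $K$ polynomially large in $|x|+a$ and wreck the estimate'' is backwards. A polynomial bound $K(\eps,f')\lesssim\eps^{-\eta}$ is \emph{stronger} than~\eqref{1/3}, and with $\eps\sim 1/\log^2(|x|+a)$ it would give $K\lesssim\log^{2\eta}(|x|+a)$, making case~1 trivial. What would actually fail is an exponential bound~\eqref{1/3} with exponent $\ge 1/2$ in place of $1/3$ (and, as the paper notes after the proof, even exponents up to $1-\delta$ can be handled by taking $\Phi'\sim c/(|t|\log^{1+\delta}|t|)$ instead).
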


\br
The theorem does not remain true if we replace $a\ge 1$ by $a>0$.
Indeed, if $b \mapsto \int_{T} f( \frac{x}{a} + b ) \,dx$ is 
strictly monotone for every $a > 0$
then $T$ cannot be of full or zero measure on any interval, so both $T$ and 
its complement has density points on any interval, 
and choosing a small enough $a$ easily shows that monotonicity fails.
\er

Since $\R$ clearly determines $a$, we obtain the following.

\begin{cor}
If $f$ satisfies the conditions of Theorem \ref{t2} then a function of the form 
$f(\frac{x}{a}+b)$ ($a\ge 1$, $b\in\R$) can be reconstructed using two test 
sets, one of which is $\R$.
\end{cor}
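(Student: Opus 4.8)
The plan is to mimic the proof of Theorem~\ref{t1}, but now carrying the parameter $a\ge 1$ all the way through, so that the test set $T$ works simultaneously for every magnification. As in the proof of Theorem~\ref{t1} we normalize so that $\supp(f)\su[-1,1]$ and $\int_\R f=1$; recall $f_a(x)=f(x/a)/a$, so that $\int_T f(\frac{x}{a}+b)\,dx$ is (up to the harmless constant factor $a$) the value of $(f_a*\chi_T)(-ab)$, hence strict monotonicity in $b$ follows once we show $(f_a*\chi_T)'>0$ everywhere, for every $a\ge 1$. Lemma~\ref{l:conv} guarantees $f_a*\chi_T\in C^1$, so pointwise positivity of the derivative is the right target.

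First I would fix a $C^1$ function $\Phi:\R\to(0,1)$ with $\Phi'>0$, and observe that we are free to choose $\Phi$ with $\Phi'$ decaying; a convenient choice makes $\min_{[x-a,x+a]}\Phi'$ comparable to $\min_{[x-1,x+1]}\Phi'$ up to a factor depending mildly on $a$, or — more simply — one just keeps the bound $\min_{[x-a,x+a]}\Phi'$ and notes that for $|x|$ large and $a$ not too large relative to $|x|$ this is still positive and controlled. Then I apply Lemma~\ref{l:77} with $\Psi=\chi_T$ to get, for every $a\ge1$ and every $x$,
\[
(f_a*\chi_T)'(x)\ \ge\ \min_{[x-a,x+a]}\Phi'-\frac{2\eps}{a}-\frac{K(\eps,f')}{a^2}\,\intnorm{\chi_T-\Phi}{a}.
\]
Since $a\ge1$, the term $2\eps/a\le 2\eps$ and $K(\eps,f')/a^2\le K(\eps,f')$, so it suffices to produce a single locally finite union of intervals $T$ with $\intnorm{\chi_T-\Phi}{a}$ small enough \emph{uniformly in $a\ge1$}, for a suitable $\eps=\eps(x)$. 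Using Lemma~\ref{l:88} I construct $T$ with $\abs{\int_\alpha^\beta(\chi_T-\Phi)}\le\de(\lfloor|\alpha|\rfloor)+\de(\lfloor|\beta|\rfloor)$; then for $x$ and $t\in[x-a,x+a]$ we get $\abs{\int_x^t(\chi_T-\Phi)}\le \de(\lfloor|x|\rfloor)+\de(\lfloor|x|+a\rfloor)$, which is controlled by the values of $\de$ on integers up to roughly $|x|+a$. Choosing $\eps(x)=\tfrac14\min_{[x-|x|-1,x+|x|+1]}\Phi'$ or some similar envelope, and then picking $\de$ to decay fast enough at infinity — fast enough to beat $K(\eps(x),f')$ — makes the last two terms together at most $\tfrac12\min_{[x-a,x+a]}\Phi'$ for all $a\ge 1$, giving $(f_a*\chi_T)'(x)>0$.

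The main obstacle, and the reason hypothesis~\eqref{1/3} is needed, is exactly this uniform-in-$a$ smallness: the ``budget'' $\intnorm{\chi_T-\Phi}{a}$ can only be made small near a fixed point $x$ out to radius $a$, and as $a\to\infty$ this forces $T$ to track $\Phi$ accurately on an unboundedly long window, so the allowed error $\de$ on the integers near $|x|$ must shrink at a rate dictated by $K(\eps,f')$ — which is why a mere finiteness of $K$ (enough for Theorem~\ref{t1}) no longer suffices and a quantitative growth bound like $K(\eps,f')\le C_1\exp(C_2\eps^{-1/3})$ enters. Concretely, one has to check that with $\Phi'$ chosen so that $\min_{[x-1,x+1]}\Phi'$ decays like, say, a suitable negative power or mild exponential in $|x|$, the quantity $K(\eps(x),f')$ grows no faster than something summably dominated by a decaying $\de$, using precisely the $\exp(\cdot^{-1/3})$ bound; the bookkeeping of which decay rate of $\Phi'$ is compatible with which decay of $\de$ is the only delicate point, and the $\eps^{-1/3}$ exponent in~\eqref{1/3} is what makes a valid choice exist. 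Everything else — the passage from $(f_a*\chi_T)'>0$ to strict monotonicity of $b\mapsto\int_T f(\frac{x}{a}+b)\,dx$, and the fact that $T$ is a locally finite union of intervals — is immediate from the lemmas already proved.
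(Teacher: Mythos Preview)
You are proving the wrong statement. The corollary assumes the conditions of Theorem~\ref{t2}, so Theorem~\ref{t2} is already available as a black box: it hands you a set $T$ for which $b\mapsto\int_T f(\tfrac{x}{a}+b)\,dx$ is strictly monotone for every $a\ge1$. The paper's proof of the corollary is then one line: $\int_\R f(\tfrac{x}{a}+b)\,dx=a\|f\|_1$ determines $a$, and once $a$ is known the set $T$ from Theorem~\ref{t2} determines $b$. Your proposal instead re-sketches the proof of Theorem~\ref{t2} itself, which is unnecessary here.

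Moreover, as a sketch of Theorem~\ref{t2} your argument has a genuine gap. You discard the factors $1/a$ and $1/a^2$ in the Lemma~\ref{l:77} estimate and then propose $\eps=\eps(x)$ depending only on $x$ (e.g.\ $\eps(x)=\tfrac14\min_{[x-|x|-1,x+|x|+1]}\Phi'$). This cannot work: for fixed $x$ and $a\to\infty$ one has $\min_{[x-a,x+a]}\Phi'\to0$ (since $\Phi'$ is integrable), so $\min_{[x-a,x+a]}\Phi'-2\eps(x)$ becomes negative and the lower bound is useless. The paper's proof of Theorem~\ref{t2} avoids this by a case split $a\ge x/2$ versus $1\le a<x/2$, choosing $\eps$ depending on both $x$ and $a$, and crucially retaining the $1/a$ and $1/a^2$ factors in the first regime; it is exactly this interplay between the $1/a^2$ gain and the decay of $\Phi'$ that makes the exponent $-1/3$ in~\eqref{1/3} the right threshold. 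Your bookkeeping omits this balance entirely.
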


\begin{proof}[Proof of Theorem~\ref{t2}]
Since $K(\eps, (cf(x/r+b))')=K(\eps c^{-1}, f') c/r$, property \eqref{1/3} is invariant under linear transformations of $f$. Therefore we may suppose that $\int_{\R}f=1$,
and that $f$ is supported in $[-1,1]$.

Let $f_a(x)=f(x/a)/a$ ($a\ge 1$).
Let $\Phi:\R\to (0,1)$ be a $C^1$ function such that 
$$
\Phi'(x)=\frac{c_1}{|x| \log^2 |x|}
$$ 
when $|x|\ge 2$ (for some positive constant $c_1$) and let $\Phi'>0$ everywhere.
Let $h:[0,\infty)\to(0,1)$ be a decreasing continuous function, 
which we will specify later.

By applying Lemma~\ref{l:88} to a sufficiently small function $\de$
we obtain a set $T$ such that 
\beq\label{defB}
\intnorm{\chi_T-\Phi}{a} \le h(|x-a|)+h(|x+a|).
\eeq
Again, we will complete the proof by proving 
that $(f * {\chi_{T}})'>0$ everywhere.

Applying Lemma~\ref{l:77} to $\Psi=\chi_T$ we get that
\begin{align}\label{e:88}
(f_a * {\chi_{T}})'(x) \ge  \min_{[x-a,x+a]} \Phi' -2\eps/a - K(\eps, f')/a^2 \intnorm{\chi_T - \Phi}{a}
\end{align}
for every $\eps>0$.

We may suppose that $x\ge 0$ as one can deal with the other case similarly.

First let us suppose that $a\ge x/2$, $a\ge 1$.
Then we have 
\beq\label{e:89}
\min_{[x-a,x+a]} \Phi' \ge 
\min\left(\frac{c_1}{|x+a|\log^2|x+a|}, \min_{[-2,2]}\Phi'\right) \ge
\frac{c_2}{3a \log^2(3a)}
\eeq
for some $c_2>0$. 
Using (\ref{defB})
and that $h$ is decreasing we get
\beq\label{e:90}
\intnorm{\chi_T - \Phi}{a} \le h(|x-a|)+h(x+a)\le 2h(0).
\eeq
Choosing $\eps=\frac{c_2}{12\log^2(3a)}$ and combining (\ref{e:88}),
(\ref{e:89}) and (\ref{e:90}) 
we obtain
\begin{align*}
(f_a * {\chi_{T}})'(x) \ge   \frac{c_2}{6a \log^2(3a)} - 
\frac{2h(0)}{a^2} K\left(\frac{c_2}{12\log^2(3a)}, f'\right).
\end{align*}
Using condition (\ref{1/3}) on the magnitude of $K$ we obtain
$$ K\left(\frac{c_2}{12\log^2(3a)}, f'\right) \le C_1\exp(C_2(12\log^2(3a)/c_2)^{1/3}) 
\le \frac{C_3 a}{\log^2(3a)}$$ for some $C_3$,
where the last inequality follows from the fact that the ratio
$$\frac{\exp(C_2(12\log^2(3a)/c_2)^{1/3})}{\frac{a}{\log^2(3a)}}$$ 
tends to $0$ as 
$a\to\infty$ and continuous on $[1,\infty)$, so it is bounded on $[1,\infty)$.
Therefore, if we choose $h(0)$ small enough (for example, $h(0)=c_2/(24C_3)$), 
we have
\begin{align*}
(f_a * {\chi_{T}})'(x) \ge   \frac{c_2}{12a \log^2(3a)} >0
\end{align*}
for every $a\ge 1$ and $x\le 2a$.

Now let us suppose that $1\le a<x/2$.
For some $c_3>0$ we have
\beq\label{e:91}
\min_{[x-a,x+a]} \Phi' \ge \frac{c_3}{2x \log^2(2x)}.
\eeq
Using (\ref{defB})
and that $h$ is decreasing we get
\beq\label{e:92}
\intnorm{\chi_T - \Phi}{a} \le h(x-a)+h(x+a)\le 2h(x/2).
\eeq
Choosing $\eps=\frac{a c_3}{8x \log^2(2x)}$  and combining (\ref{e:88}),
(\ref{e:91}) and (\ref{e:92}) we obtain
\begin{align*}
(f_a * {\chi_{T}})'(x) 
\ge \frac{c_3}{4x \log^2(2x)} - 
K\left(\frac{a c_3}{8x \log^2(2x)}, f'\right) \frac{2h(x/2)}{a^2}.
\end{align*}
Since $K(\eps,f)$ is non-increasing in $\eps$ and $a\ge 1$, we get
$$K\left(\frac{a c_3}{8x \log^2(2x)}, f'\right) \frac{2h(x/2)}{a^2}
\le K\left(\frac{c_3}{8x \log^2(2x)}, f'\right)2h(x/2).$$
Therefore, choosing $h$ such that for every $x\ge 2$ we have
$$
h(x/2) \le \left. \frac{c_3}{16x \log^2(2x)}\middle/ 
K\left(\frac{c_3}{8x \log^2(2x)}, f'\right)\right.,$$
we get that
$$(f_a * {\chi_{T}})'(x) \ge \frac{c_3}{8x \log^2(2x)} >0$$
for every $a\ge 1$ and $x>2a$.
\end{proof}

\begin{remark}
It is not hard to check that one can replace the exponent 
$-1/3$ by $-(1-\delta)$ for
any $\delta>0$ in the condition (\ref{1/3}). 
To obtain this, the function $\Phi$ in the proof has to be chosen so that 
$\Phi'(x)=c_1/(|x|\log^{1+\delta}|x|)$ for $|x|\ge 2$.
We omit the details since we will not need this fact.
\end{remark}

\bd
We say that $x_0 \in \RR$ is a \emph{controlled singularity} of a function $g : \RR \to \RR$ if  $ | g(x )| \le  \frac{1}{|x - x_0|^{1-\delta}}$ in a neighbourhood of $x_0$ for some $\delta>0$, and $g$ is monotone on $(x_0 - \eps, x_0)$ and $(x_0, x_0 + \eps)$ for some $\eps > 0$.
\ed

\begin{lemma}\label{singularity}
If $g$ is measurable, compactly supported, and locally is in $C^1$ except for a finite number of controlled singularities then
$$
\text{there exist } C_1, C_2 \text{ such that } \quad
K(\eps ,g)\le C_1\exp(C_2\eps^{-1/3}) \quad
\text{for every } \eps>0.
$$
\end{lemma}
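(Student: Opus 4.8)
The plan is to reduce the estimate on $K(\eps,g)$ to two separate contributions: the part of $g$ away from its singularities, which is genuinely $C^1$ on a compact set and therefore has bounded variation, and the pieces of $g$ near each controlled singularity. Since $K(\eps,\cdot)$ is subadditive in an obvious sense — if $g = g_1 + g_2$ and we approximate each $g_i$ by a compactly supported $h_i$ with $\norm{g_i - h_i}_1 < \eps/2$, then $h_1+h_2$ approximates $g$ within $\eps$ and $\Var(h_1+h_2) \le \Var(h_1) + \Var(h_2)$ — it suffices to prove the bound when $g$ is a single controlled singularity, i.e.\ $g$ is supported near $x_0$, monotone on each side of $x_0$, and $|g(x)| \le |x-x_0|^{-(1-\delta)}$ there. (The $C^1$, singularity-free part contributes a fixed finite constant to $\Var$, which is absorbed into $C_1$.) After translating we may assume $x_0 = 0$.

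So the core task is: given $g$ monotone on $(-\eps_0,0)$ and on $(0,\eps_0)$ with $|g(x)| \le |x|^{-(1-\delta)}$, estimate $K(\eps, g)$. The natural approximant is to truncate: fix a cutoff radius $\rho = \rho(\eps) > 0$ and set $h$ equal to $g$ outside $(-\rho,\rho)$ and equal to a constant (say $g(\rho)$, or $0$) on $(-\rho,\rho)$, smoothing slightly to land in the class allowed by the definition of $K$ (or just using that $K$ can be computed with bounded-variation functions). Then $\norm{g - h}_1 \le 2\int_0^\rho t^{-(1-\delta)}\,dt = \tfrac{2}{\delta}\rho^{\delta}$, so to make this $<\eps$ we take $\rho \asymp \eps^{1/\delta}$. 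On the other hand, because $g$ is monotone on each side, $\Var(h)$ on the region $\{|x| \ge \rho\}$ is controlled by the oscillation of $g$ there, which is at most a constant times $\sup_{|x|\ge\rho}|g(x)| \le \rho^{-(1-\delta)} \asymp \eps^{-(1-\delta)/\delta}$, plus the jump at $\pm\rho$ of the same order. Hence $K(\eps,g) \lesssim \eps^{-(1-\delta)/\delta}$, a polynomial bound in $1/\eps$, which is far stronger than $C_1\exp(C_2\eps^{-1/3})$; in particular the claimed inequality holds (with room to spare). Letting $\eps \to \eps_0$ or larger, $K(\eps,g)$ is bounded, so the exponential bound holds trivially for large $\eps$ as well, and monotonicity of $K$ in $\eps$ handles the transition.

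The one point requiring a little care — and the main obstacle, such as it is — is the bookkeeping when the monotonicity direction of $g$ on the two sides of $0$ differs, or when $g$ is not bounded-variation globally merely because of the singular spike: one must check that after truncation the resulting $h$ really has finite total variation with the stated bound, i.e.\ that the only source of large variation was the spike. This follows because outside a neighbourhood of the finitely many singularities $g$ is $C^1$ on a compact set, hence Lipschitz there, hence of bounded variation with a bound depending only on $g$. Assembling: write $g = g_{\mathrm{reg}} + \sum_i g_i$ where $g_{\mathrm{reg}}$ is bounded-variation with a fixed bound and each $g_i$ is a single controlled singularity as above; apply the subadditivity of $K$ in $\eps$ together with the per-singularity polynomial estimate; and absorb the finitely many constants into $C_1, C_2$. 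This completes the proof, and in fact shows that the hypothesis (\ref{1/3}) of Theorem~\ref{t2} is satisfied with substantial slack.
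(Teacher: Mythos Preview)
Your argument is correct and arrives at the same polynomial bound $K(\eps,g)\lesssim \eps^{-(1-\delta)/\delta}$ as the paper. The paper's proof is even shorter: rather than truncating in the domain at radius $\rho$, it truncates in the range, setting $g_n=\min(n,\max(-n,g))$; a direct computation then gives $\|g-g_n\|_1<\eps$ for $n\asymp\eps^{-(1-\delta)/\delta}$ and $\Var(g_n)\lesssim n$, yielding the same estimate without an explicit decomposition into regular part plus singular pieces (the monotonicity near each singularity ensures $g_n$ is still monotone there, so its variation is controlled by the cap $n$). The two truncations are essentially dual for a power-type singularity, and both show the bound is polynomial rather than merely subexponential.
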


\begin{proof}
Let us approximate $g$ by $g_n = \min(n, \max(-n, g))$ for a large enough $n$. An easy computation shows that we need $n = C \eps^{- \frac{1 - \delta}{\delta}}$ to achieve $\norm{g - g_n}_1 < \eps$, and then $\Var(g_n) \le C'  \eps^{- \frac{1 - \delta}{\delta}}$. Therefore  $K(\eps, g ) \le C'  \eps^{- \frac{1 - \delta}{\delta}}$ is subexponential and we are done.
\end{proof}

\begin{cor}
In Theorem~\ref{t2} one can replace (\ref{1/3}) by the condition
that $f'$ is locally in $C^1$ except for a finite number of controlled 
singularities.
\end{cor}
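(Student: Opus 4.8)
The plan is to reduce this corollary directly to Theorem~\ref{t2} by checking that the new hypothesis implies the growth condition~\eqref{1/3} on $K(\eps,f')$. All the analytic content has already been isolated in Lemma~\ref{singularity}, so essentially nothing remains beyond bookkeeping.

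First I would verify that $f'$ satisfies the hypotheses of Lemma~\ref{singularity} with $g=f'$. Since $f$ is compactly supported and absolutely continuous, $f\equiv 0$ outside some bounded interval, hence $f'=0$ almost everywhere outside that interval; thus $f'$ is (a.e.\ equal to) a compactly supported measurable function. By assumption $f'$ is locally in $C^1$ except for finitely many controlled singularities, which is precisely the remaining hypothesis of Lemma~\ref{singularity}. Therefore Lemma~\ref{singularity} applies and produces constants $C_1,C_2$ with $K(\eps,f')\le C_1\exp(C_2\eps^{-1/3})$ for every $\eps>0$, i.e.\ condition~\eqref{1/3} holds. (In fact the proof of Lemma~\ref{singularity} gives the much stronger polynomial bound $K(\eps,f')\le C'\eps^{-(1-\delta)/\delta}$, but the subexponential bound~\eqref{1/3} is all that Theorem~\ref{t2} requires.) Now I would invoke Theorem~\ref{t2} verbatim: $f$ is non-negative, not identically zero, compactly supported and absolutely continuous, and~\eqref{1/3} holds, so there is a measurable set $T$, a locally finite union of intervals, such that $b\mapsto\int_T f(\frac{x}{a}+b)\,dx$ is strictly monotone for every $a\ge 1$ — which is exactly the assertion of the corollary.

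I do not expect any genuine obstacle here. The only things to check are the trivial facts that $f'$ is compactly supported and measurable (immediate from $f$ being compactly supported and absolutely continuous) and that the phrase ``$f'$ is locally in $C^1$ except for a finite number of controlled singularities'' is literally the hypothesis of Lemma~\ref{singularity} applied to $g=f'$. The real substance — that each controlled singularity costs only a polynomial, hence subexponential, amount in $K(\eps,\cdot)$ — is already contained in Lemma~\ref{singularity} and was established there by truncating $g$ at height $n$ and estimating $\norm{g-g_n}_1$ and $\Var(g_n)$.
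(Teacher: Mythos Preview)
Your proposal is correct and matches the paper's approach exactly: the corollary is stated immediately after Lemma~\ref{singularity} with no separate proof, since applying that lemma with $g=f'$ yields~\eqref{1/3} and then Theorem~\ref{t2} applies verbatim. Your bookkeeping (noting that $f'$ is compactly supported and measurable because $f$ is compactly supported and absolutely continuous) is the only thing there is to say.
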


\section{Absolute continuity of the Radon transform and
reconstruction of a translate of a fixed set}
\label{s:transl}

\begin{notation}
For a measurable set $E\su\R^d$ $(d\ge 2)$ of finite Lebesgue measure and a unit vector
$\theta\in S^{d-1}$ the \emph{Radon transform in direction $\theta \in S^{d-1}$} is defined
as the measure function of the sections of $E$ in direction $\theta \in S^{d-1}$; that is, 
$$
(R_\theta \chi_E) (r) = 
\la^{d-1} (E \cap \{x \in \RR^d : \langle x,\theta \rangle = r\}),
$$ 
where $\langle\cdot, \cdot \rangle$ denotes scalar product. Note that $R_\theta \chi_E$ is almost everywhere
well defined.
\end{notation}

\begin{theorem}\label{t:abscont}
Suppose that $E\su\R^d$ $(d\ge 2)$ is a bounded measurable set 
with positive Lebesgue measure,
$\theta_1,\ldots,\theta_d\in S^{d-1}$ are linearly independent 
and the Radon transforms $R_{\theta_1} \chi_E, \ldots, R_{\theta_d} \chi_E$ 
are absolutely continuous modulo nullsets. 
Then a translate of $E$ can be reconstructed using $d$ sets.
\end{theorem}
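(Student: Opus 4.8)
The plan is to reduce the $d$-dimensional reconstruction problem to $d$ independent one-dimensional reconstruction problems, one for each direction $\theta_i$, via the slicing formula from the Introduction and Theorem~\ref{t1}. Concretely, for each $i$ let $S_i\su\R$ be the test set provided by Theorem~\ref{t1} applied to the function $g_i = R_{\theta_i}\chi_E$ (after replacing $g_i$ by a genuine absolutely continuous representative, which is legitimate since by hypothesis $R_{\theta_i}\chi_E$ is absolutely continuous modulo nullsets; note $g_i$ is non-negative, compactly supported because $E$ is bounded, not identically zero because $\la^d(E)>0$, and absolutely continuous, so Theorem~\ref{t1} applies). After an orthogonal change of coordinates sending $\theta_i$ to the first basis vector, the test set $S_i\times\R^{d-1}$ (rotated back) is the ``slab-type'' set whose intersection with $E+x$ has measure $\int_{S_i}(R_{\theta_i}\chi_E)(t - \langle x,\theta_i\rangle)\,dt$ by the analogue of \eqref{e:integral}. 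Define $T_i$ to be exactly this rotated preimage of $S_i\times\R^{d-1}$.

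Then I would argue injectivity as follows. Suppose $A = E+x$ and $B = E+y$ satisfy $\la^d(A\cap T_i) = \la^d(B\cap T_i)$ for all $i=1,\ldots,d$. By the slicing identity, $\int_{S_i}g_i(t-\langle x,\theta_i\rangle)\,dt = \int_{S_i}g_i(t-\langle y,\theta_i\rangle)\,dt$ for each $i$. By the ``strictly increasing'' conclusion of Theorem~\ref{t1} applied to $g_i$ and $S_i$, the map $b\mapsto \int_{S_i}g_i(t-b)\,dt$ is injective, hence $\langle x,\theta_i\rangle = \langle y,\theta_i\rangle$ for every $i$. Since $\theta_1,\ldots,\theta_d$ are linearly independent, they span $\R^d$, so $\langle x-y,\theta_i\rangle = 0$ for all $i$ forces $x=y$, whence $A=B$. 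This gives reconstruction using the $d$ sets $T_1,\ldots,T_d$.

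The routine point to nail down carefully is the slicing identity itself: one must check that rotating $S_i\times\R^{d-1}$ by the orthogonal map carrying $e_1$ to $\theta_i$ produces a set $T_i$ with $\la^d((E+x)\cap T_i) = \int_{S_i}(R_{\theta_i}\chi_E)(t-\langle x,\theta_i\rangle)\,dt$. This is just Fubini's theorem in the rotated coordinates together with the translation identity $\langle (E+x), \text{(rotated first coord)}\rangle$, exactly as \eqref{e:integral} was derived, and the orthogonal invariance of $\la^d$; I would state it as a one-line consequence of \eqref{e:integral} applied after a rotation. One should also note that $T_i$ is measurable (it is a rotated cylinder over a measurable subset of $\R$) and that, although $T_i$ has infinite measure, the intersections $(E+x)\cap T_i$ have finite measure since $E$ is bounded, so the definition of reconstruction applies — or, if one insists on finite-measure test sets, intersect each $T_i$ with a fixed large ball containing all relevant translates.

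The main obstacle, such as it is, is conceptual rather than technical: recognizing that the one-dimensional result Theorem~\ref{t1} yields not merely injectivity of $b\mapsto\int_{S}g(x-b)\,dx$ but strict monotonicity, and that this is precisely what lets the $d$ scalar equations $\langle x,\theta_i\rangle = \langle y,\theta_i\rangle$ be extracted cleanly and then combined by linear independence. There is no genuine analytic difficulty here — all the hard analysis (the construction of $S$ via Lemmas~\ref{l:77}, \ref{l:88}, and \ref{l:Marci}) has already been done in Theorem~\ref{t1}; this theorem is the packaging step that turns it into a statement about sets in $\R^d$.
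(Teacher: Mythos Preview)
Your proof is correct and essentially identical to the paper's: the paper defines the test sets directly as $V_i=\{a\in\R^d:\langle a,\theta_i\rangle\in T_i\}$ (which is exactly your rotated cylinder over $S_i$), verifies the same slicing identity $\la^d((E+v)\cap V_i)=\int_{T_i}(R_{\theta_i}\chi_E)(x-\langle v,\theta_i\rangle)\,dx$, and concludes via linear independence of the $\theta_i$. Your parenthetical about intersecting each $T_i$ with a fixed large ball is unnecessary---the paper's definition of reconstruction imposes no finiteness condition on the test sets---and would not work as stated anyway, since the translates $E+x$ range over all of $\R^d$.
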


\begin{proof}
We may assume that the Radon transforms are absolutely continuous,
that is, there are no exceptional nullsets, since modifying the functions on
nullsets will have no effect on the following argument. 
By applying Theorem~\ref{t1} to the functions  
$R_{\theta_1} \chi_E, \ldots, R_{\theta_d} \chi_E$ we get measurable test sets
$T_1,\ldots,T_d\su\R$ such that 
\beq\label{intneq}
\int_{T_i} (R_{\theta_i} \chi_E) (x-b) \,dx \neq \int_{T_i} (R_{\theta_{i'}} \chi_E) (x-b') \,dx
\qquad 
(b\neq b',\ i\in\{1,\ldots,d\}).
\eeq
For each $i$ let 
\beq\label{Vi}
V_i= \{a\in\R^d\ :\ \langle a,\theta_i \rangle \in T_i \}.
\eeq
One can easily check that 
$$
\la^d((E+v)\cap V_i)= 
\int_{T_i} (R_{\theta_i} \chi_E) (x-\langle v, \theta_i \rangle) \,dx
$$ 
for any $v\in\R^p$.
Combining this with (\ref{intneq}) we get that $\la^d((E+v)\cap V_i)$
determines $\langle v, \theta_i \rangle$. 
Since $\theta_1,\ldots,\theta_d$ are linearly independent, this implies
that the numbers $\la^d((E+v)\cap V_1),\ldots,\la^d((E+v)\cap V_d)$ 
determine $v$,
which completes the proof.
\end{proof}

\begin{remark}
Since in Theorem~\ref{t1} every test set can be chosen to be a locally finite union
of intervals and the test sets of the above proof are defined by (\ref{Vi}),
each test set of the above theorem (and of all of its corollaries)
can be chosen as a locally finite union of parallel
layers, where by layer we mean a rotated image of a set of the form 
$[a,b]\times \R^{d-1}$.
\end{remark}

The above theorem can clearly be applied to many
geometric objects.

\begin{cor}
\begin{enumerate}
\item
A ball of fixed radius 
in $\R^d$ ($d\ge 1$) can be reconstructed using $d$ sets; that is, 
for any $r$ there exist measurable sets $T_1, \dots,  T_d \su \RR^d$ such that if $x \neq x'$ then $\la^d (B(x,r) \cap T_i) \neq \la^d (B(x',r) \cap T_i)$ for some $i \in \{ 1, \dots, d \}$.
\item
Let $E$ be a (not necessarily convex) polytope in $\R^d$ ($d\ge 2$). 
Then a translate of $E$ can be reconstructed using $d$ test sets.
\end{enumerate}
\end{cor}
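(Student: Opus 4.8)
The plan is to reduce both parts to Theorem~\ref{t:abscont}, so in each case the only real work is to exhibit $d$ linearly independent directions in which the Radon transform $R_\theta\chi_E$ is absolutely continuous modulo nullsets.

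For part (1), first observe that the case $d=1$ is already covered by Theorem~\ref{t:finiteint} (a ball is an interval). For $d\ge 2$ note that $B(x,r)$ is a translate of $B(0,r)$, so it suffices to compute $R_\theta\chi_{B(0,r)}$ for one convenient direction, say $\theta=e_1$; by rotational symmetry the Radon transform is the same in every direction. An explicit computation gives $(R_{e_1}\chi_{B(0,r)})(t)=\omega_{d-1}(r^2-t^2)^{(d-1)/2}$ for $|t|\le r$ and $0$ otherwise, where $\omega_{d-1}$ is the volume of the unit $(d-1)$-ball. This function is continuous on $\R$, is $C^1$ on $(-r,r)$, vanishes outside $[-r,r]$, and has one-sided derivatives that blow up at $\pm r$ only when $d=2$ (where it behaves like $\sqrt{r^2-t^2}$); in every case it is absolutely continuous, since near $\pm r$ it is monotone and Hölder, hence of bounded variation on $[-r,r]$ and locally constant outside, so its derivative is integrable and it equals the integral of its derivative. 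Thus $R_\theta\chi_{B(0,r)}$ is absolutely continuous in all $d$ directions $e_1,\dots,e_d$ (indeed all directions), and Theorem~\ref{t:abscont} applies.

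For part (2), let $E\su\R^d$ be a polytope. The section $E\cap\{\langle x,\theta\rangle=r\}$ is a $(d-1)$-dimensional polytope whose vertices move piecewise-linearly in $r$ (each vertex is the intersection of $d-1$ facet hyperplanes of $E$ with the slicing hyperplane, an affine function of $r$ on each subinterval where the combinatorial type is constant), so $(R_\theta\chi_E)(r)$ is a piecewise polynomial function of $r$ of degree at most $d-1$, continuous, and compactly supported; such a function is absolutely continuous. Actually one must be slightly careful about directions $\theta$ parallel to a facet of $E$, where the section can jump: if $\theta$ is orthogonal to some facet hyperplane of $E$, the function $r\mapsto\la^{d-1}(E\cap\{\langle x,\theta\rangle=r\})$ may have a genuine jump discontinuity. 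But $E$ has only finitely many facets, hence only finitely many ``bad'' directions (up to sign), so one can choose $d$ linearly independent directions $\theta_1,\dots,\theta_d\in S^{d-1}$ avoiding all of them; for each such $\theta_i$ the Radon transform is piecewise polynomial and continuous, in particular absolutely continuous. Theorem~\ref{t:abscont} then gives the reconstruction using $d$ test sets.

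The main obstacle, in both parts, is the verification that the candidate Radon transforms are genuinely absolutely continuous rather than merely of bounded variation with a possible jump: for the ball this means checking there is no jump at the ``equator'' $t=\pm r$ (there is none, since the section shrinks continuously to a point), and for the polytope it means isolating the finitely many directions where a facet lies in the slicing hyperplane and a jump could occur, and then choosing the $d$ directions to avoid them. Once the finitely-many-bad-directions observation is in place the rest is routine: piecewise polynomial (resp.\ piecewise $C^1$ with integrable derivative) plus continuity plus compact support implies absolute continuity, and Theorem~\ref{t:abscont} does the rest.
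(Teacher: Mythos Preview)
Your proposal is correct and follows essentially the same approach as the paper: reduce to Theorem~\ref{t:abscont} by exhibiting $d$ linearly independent directions with absolutely continuous Radon transform, handling $d=1$ separately via the interval results and, for the polytope, avoiding the finitely many facet-normal directions. The paper's own proof is terser (it simply asserts that $R_\theta\chi_B$ is ``clearly absolutely continuous'' for every $\theta$, and that $R_\theta\chi_E$ is absolutely continuous whenever $\theta$ is not orthogonal to a face of the polytope), but the underlying reasoning is the same as what you spell out.
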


\begin{proof}
In Theorem~\ref{t:unitint} we already proved the case $d=1$ of (1).

Now let $d \ge 2$. 
If $B$ is a fixed ball then $R_\theta \chi_B$ is clearly absolutely continuous
for every $\theta$.
If $E$ is a polytope in $\R^d$ then $R_\theta \chi_E$ is absolutely 
continuous for any $\theta$ which is not orthogonal to any face of $E$.
Therefore in both cases Theorem~\ref{t:abscont} can be applied.
\end{proof}

In the remaining part of this section
in order to apply Theorem~\ref{t:abscont} for a more general set $E\in\R^d$,
we try to find 
many directions $\theta\in S^{d-1}$ for 
which $R_\theta \chi_E$ is absolutely continuous modulo a nullset. 

To get a general positive result for $d\ge 3$ we use Fourier transforms.
Denote the Fourier transform of a function $f$ by $\hat{f}$.

\bl
\lab{l:weak}
Let $f : \RR \to \RR$ be a compactly supported $L^2$ function. If $\ r\hat{f}(r) \in L^2$ then $f$ is absolutely continuous modulo a nullset and $f' \in L^2$. 
\el

\bp
Recall that an $L^1$ function agrees with an absolutely continuous function almost everywhere if and only if
its weak derivative is an $L^1$ function. (Indeed, this is the well-known fact that the Sobolev space $W^{1,1}$ is the class of absolutely continuous function modulo nullsets, see \cite[Corollary 7.14.]{Le}.)

Therefore it suffices to prove that the function
\[
f^* (r) = \widehat{ - 2 \pi i r \hat{f} (-r)}  \quad (r \in \RR)
\]
is in $L^1$, it is the weak derivative of $f$, and that $f^* \in L^2$. 
Clearly, $f^* \in L^2$ follows from the assumption $\ r\hat{f}(r) \in L^2$. 
Let $\phi$ be an arbitrary $C^\infty$ function of compact support. Using the Parseval Formula
twice as well as $\widehat{\psi'} (r)= 2 \pi i r \hat{\psi} (r)$ and $\hat{\hat{g}} (r) = g(-r)$ we obtain
\[
\int_\RR f^* \phi = \int_\RR  \widehat{-2 \pi i r \hat{f} (-r)} \overline{\overline{\phi(r)}} \,dr = \int_\RR  2 \pi i r \hat{f} (r) \overline{\widehat{\overline{\phi(r)}}} \,dr =
\]
\[
- \int_\RR \hat{f}(r) \overline{2 \pi i r  \widehat{\overline{\phi(r)}}} \,dr =
- \int_\RR \hat{f}(r) \overline{\widehat{\overline{\phi}'(r)}} \,dr =
- \int_\RR f(r) \overline{\overline{\phi}'(r)} \,dr = - \int_\RR f \phi',
\]
which yields that $f^*$ is the weak derivative of $f$. But it is easy to see that the support of the weak derivative of $f$ is contained in $\supp(f)$, hence $f^*$ is a compactly supported $L^2$ function, therefore it is in $L^1$, which concludes the proof.
\ep

\begin{lemma}\label{l:fourier}
Let $K\su\R^d$ ($d\ge 2$) be a bounded measurable 
set of positive Lebesgue measure.
Then for almost every $\theta\in S^{d-1}$ we have 
$$
\int_\R |\widehat{R_\theta \chi_K} (r) |^2 |r|^p \,dr<\infty 
\textrm{ for any } 0 \le p \le d-1.
$$
\end{lemma}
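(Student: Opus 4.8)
The plan is to reduce the statement to the Fourier slice (projection) theorem and a single application of Fubini on $S^{d-1}\times\R$. The key identity is that for almost every $\theta\in S^{d-1}$, the one-dimensional Fourier transform of $R_\theta\chi_K$ is the restriction of the $d$-dimensional Fourier transform of $\chi_K$ to the line $\R\theta$; that is, $\widehat{R_\theta\chi_K}(r)=\widehat{\chi_K}(r\theta)$, up to the normalization convention fixed earlier in the paper. Since $K$ is bounded and of finite measure, $\chi_K\in L^1(\R^d)\cap L^2(\R^d)$, so $\widehat{\chi_K}$ is a bounded continuous function that also lies in $L^2(\R^d)$ by Plancherel. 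The quantity we must bound is then $\int_\R |\widehat{\chi_K}(r\theta)|^2|r|^p\,dr$, and the claim is that this is finite for almost every $\theta$.

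First I would write the integral over $\R^d$ in polar coordinates: for any $0\le p\le d-1$,
\[
\int_{\R^d} |\widehat{\chi_K}(\xi)|^2 |\xi|^{p-(d-1)}\,d\xi
= \int_{S^{d-1}}\int_0^\infty |\widehat{\chi_K}(r\theta)|^2 r^{p-(d-1)} r^{d-1}\,dr\,d\sigma(\theta)
= \int_{S^{d-1}}\int_0^\infty |\widehat{\chi_K}(r\theta)|^2 r^{p}\,dr\,d\sigma(\theta),
\]
and symmetrizing in $\theta\mapsto-\theta$ turns the inner integral into $\frac12\int_\R|\widehat{\chi_K}(r\theta)|^2|r|^p\,dr$. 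So if the left-hand side is finite, then by Fubini the inner integral is finite for $\sigma$-almost every $\theta$, which is exactly the conclusion (one then intersects the exceptional sets over, say, $p\in\{0,\dots,d-1\}$, or over a countable dense set of $p$ together with monotonicity in $p$ of the relevant tail — in fact $|r|^p\le 1+|r|^{d-1}$ handles all $0\le p\le d-1$ at once, so it suffices to treat $p=0$ and $p=d-1$).

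So everything reduces to showing $\int_{\R^d}|\widehat{\chi_K}(\xi)|^2|\xi|^{p-(d-1)}\,d\xi<\infty$ for $p=0$ and $p=d-1$. For $p=d-1$ the weight is $|\xi|^0=1$ and this is just $\|\widehat{\chi_K}\|_2^2=\lambda^d(K)<\infty$ by Plancherel. For $p=0$ the weight is $|\xi|^{-(d-1)}$, which is a locally integrable function on $\R^d$ precisely because $d-1<d$; combined with the boundedness of $\widehat{\chi_K}$ near the origin and its $L^2$ decay at infinity (split the integral at $|\xi|=1$: near $0$ use $|\widehat{\chi_K}|\le\lambda^d(K)$ and $|\xi|^{-(d-1)}\in L^1_{\mathrm{loc}}$, away from $0$ use $|\xi|^{-(d-1)}\le 1$ and $\widehat{\chi_K}\in L^2$), the integral is finite. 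The main point to be careful about — the only genuine obstacle — is justifying the slice identity $\widehat{R_\theta\chi_K}(r)=\widehat{\chi_K}(r\theta)$ for almost every $\theta$: $R_\theta\chi_K$ is only defined almost everywhere and a priori only known to be in $L^1(\R)$, so one should first observe (again by Fubini on $\R^d$ written as fibers over $\R\theta$) that for almost every $\theta$ the function $r\mapsto R_\theta\chi_K(r)$ is a well-defined compactly supported $L^1$ function whose integral equals $\lambda^d(K)$, and then the Fourier slice formula is a direct computation with Fubini. Everything else is routine.
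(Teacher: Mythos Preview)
Your proof is correct and follows essentially the same route as the paper: the Fourier slice identity $\widehat{R_\theta\chi_K}(r)=\widehat{\chi_K}(r\theta)$, polar coordinates, Plancherel for $\chi_K$, and boundedness of $\widehat{\chi_K}$. The only minor difference is that the paper handles all $0\le p\le d-1$ at once after fixing a good $\theta$ (using $|r|^p\le 1$ for $|r|\le 1$ and $|r|^p\le |r|^{d-1}$ for $|r|>1$, together with $\widehat{\chi_K}$ bounded), whereas you take a slight detour through the global integral $\int_{\R^d}|\widehat{\chi_K}(\xi)|^2|\xi|^{-(d-1)}\,d\xi$ for the $p=0$ endpoint; both are fine, and your own remark that $|r|^p\le 1+|r|^{d-1}$ already makes the detour unnecessary.
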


\begin{proof}
By Plancherel Theorem  $\int_{\RR^d} |\hat{{\chi_K}}|^2 = \int_{\RR^d} {\chi_K}^2 <\infty$. Therefore, using polar coordinates, for almost every direction $\theta$,
\beq
\lab{e:Lp}
\int_\R |\widehat{{\chi_K}} (r\theta) |^2 |r|^{d-1} \,dr<\infty.
\eeq
Fix such a $\theta$. 
Since $\chi_K\in L^1$, the function $\widehat{{\chi_K}}$
is bounded, so (\ref{e:Lp}) implies that 
\beq\label{e:L2}
\int_\R |\widehat{{\chi_K}} (r\theta) |^2 |r|^p \,dr<\infty
\eeq
for any $p \le d-1$.
An easy computation shows the well-known fact that 
$\widehat{R_\theta \chi_K}(r)=\widehat{{\chi_K}}(r\theta)$, so we are done.
\end{proof}

\begin{theorem}\label{t:abscontsections}
Let $K\su\R^d$ ($d\ge 3$) be a bounded measurable 
set. Then the Radon transform of $\chi_K$ in direction $\theta$, that is, 
$$
(R_\theta \chi_K) (r) = 
\la^{d-1} (K \cap \{x \in \RR^d : \langle x,\theta \rangle = r\})
$$
is absolutely continuous for almost every $\theta\in S^{d-1}$.
\end{theorem}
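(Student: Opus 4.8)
The plan is to combine Lemma~\ref{l:fourier} with Lemma~\ref{l:weak}, reducing the absolute continuity of the Radon transform to an $L^2$ decay estimate on its Fourier transform, which in turn follows from Plancherel applied to $\chi_K$ on $\R^d$. First I would observe that since $K$ is bounded, for every $\theta$ the section $K \cap \{x : \langle x, \theta \rangle = r\}$ is empty for $|r|$ outside a fixed bounded interval, so $R_\theta \chi_K$ is compactly supported; moreover it is clearly in $L^1$ (its integral is $\la^d(K)$), and since the $(d-1)$-dimensional measures of sections of a bounded set are uniformly bounded, it is in fact in $L^\infty$, hence in $L^2$. Thus $R_\theta \chi_K$ satisfies the hypotheses of Lemma~\ref{l:weak} provided we can show $r \widehat{R_\theta \chi_K}(r) \in L^2$.

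Next I would invoke Lemma~\ref{l:fourier}: for almost every $\theta \in S^{d-1}$ we have $\int_\R |\widehat{R_\theta \chi_K}(r)|^2 |r|^p \, dr < \infty$ for every $0 \le p \le d-1$. Since $d \ge 3$ we have $d - 1 \ge 2$, so in particular we may take $p = 2$, giving $\int_\R |\widehat{R_\theta \chi_K}(r)|^2 |r|^2 \, dr < \infty$, i.e. $r \widehat{R_\theta \chi_K}(r) \in L^2$, for almost every $\theta$. Fixing such a $\theta$ and applying Lemma~\ref{l:weak} to $f = R_\theta \chi_K$ yields that $R_\theta \chi_K$ agrees almost everywhere with an absolutely continuous function; but for the statement we should note that $R_\theta \chi_K$ is only defined up to a nullset anyway (as remarked in the Notation), so ``absolutely continuous for almost every $\theta$'' is exactly the conclusion we obtain. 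This completes the argument.

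There is essentially no hard step here: the theorem is a fairly direct packaging of the two preceding lemmas, with the hypothesis $d \ge 3$ entering precisely to guarantee that the exponent range $[0, d-1]$ in Lemma~\ref{l:fourier} contains the value $2$ needed by Lemma~\ref{l:weak}. The only point requiring a word of care is the preliminary verification that $R_\theta \chi_K \in L^2$ with compact support, so that Lemma~\ref{l:weak} is applicable; this is immediate from boundedness of $K$. (For $d = 2$ the method breaks down, since then $d - 1 = 1 < 2$, and indeed one expects the statement to fail in the plane without further hypotheses such as rectifiability of the boundary.)
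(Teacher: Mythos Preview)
Your argument is essentially the paper's own proof: apply Lemma~\ref{l:fourier} with $p=2$ (available because $d\ge 3$), then feed the resulting estimate into Lemma~\ref{l:weak}. The one small difference is the last sentence. You stop at ``absolutely continuous modulo a nullset'' and observe that $R_\theta\chi_K$ is only defined almost everywhere anyway; the paper instead cites a result of Oberlin--Stein to the effect that $R_\theta\chi_K$ is genuinely continuous for almost every $\theta$, thereby upgrading the conclusion to literal absolute continuity and matching the theorem's wording. The paper immediately remarks that this upgrade is unnecessary for the applications, so your version is perfectly adequate for everything that follows.
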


\begin{proof}
Since $d\ge 3$ we can apply Lemma~\ref{l:fourier} for $p=2$ to get that
$r\widehat{R_\theta \chi_K} (r) \in L^2$ for almost every $\theta\in S^{d-1}$.
Hence Lemma \ref{l:weak} applied to $R_\theta \chi_K$
gives that $R_\theta \chi_K$ is absolutely continuous modulo a nullset for almost every $\theta \in S^{d-1}$. By \cite[Corollary 2]{OS}, $R_\theta \chi_K$ is continuous for almost every $\theta$, which completes the proof.
\end{proof}

\begin{remark}
In fact, we do not need that the functions $R_\theta \chi_K$ are actually continuous for almost every $\theta$. We will only use that they are absolutely continuous modulo nullsets for our applications.
\end{remark}

Combining Theorems~\ref{t:abscont}~and~\ref{t:abscontsections} we get
the following.

\begin{cor}\label{c:posmeasure}
Let $d\ge 3$ and let $E\subset\R^d$ be a bounded set of positive Lebesgue
measure. Then a translate of $E$ can be reconstructed using $d$ sets; that is,
there are measurable sets $T_1, \dots,  T_d \su \RR^d$ such that if $x \neq
x'$ then $\la^d ((E+x) \cap T_i) \neq \la^d ((E+x') \cap T_i)$ for some $i \in
\{ 1, \dots, d \}$. 
\end{cor}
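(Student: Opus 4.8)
The plan is to simply combine the two theorems that have just been proved, verbatim. The statement of Corollary~\ref{c:posmeasure} asks, for $d\ge 3$ and a bounded set $E\subset\R^d$ of positive Lebesgue measure, to produce $d$ test sets reconstructing a translate of $E$. Theorem~\ref{t:abscont} already delivers exactly this conclusion provided we can find $d$ linearly independent directions $\theta_1,\dots,\theta_d\in S^{d-1}$ for which the Radon transforms $R_{\theta_i}\chi_E$ are absolutely continuous modulo nullsets. So the only thing to check is the existence of such a $d$-tuple of directions.

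First I would invoke Theorem~\ref{t:abscontsections}: since $d\ge 3$ and $E$ is bounded and measurable, $R_\theta\chi_E$ is absolutely continuous for almost every $\theta\in S^{d-1}$ (with respect to the surface measure on the sphere). Let $N\subset S^{d-1}$ be the exceptional nullset; then $S^{d-1}\setminus N$ has full measure. Next I would argue that a full-measure subset of $S^{d-1}$ must contain $d$ linearly independent vectors: if every $d$-element subset of $S^{d-1}\setminus N$ were linearly dependent, then $S^{d-1}\setminus N$ would be contained in a hyperplane through the origin intersected with the sphere, which is a set of surface measure zero, contradicting that $S^{d-1}\setminus N$ is co-null. (Equivalently: pick $\theta_1\notin N$; the set of $\theta$ lying in $\mathrm{span}(\theta_1)$ is null on the sphere, so pick $\theta_2\notin N$ outside it; inductively, at each stage the span of the previously chosen vectors meets the sphere in a null set, so we can avoid it together with $N$.) This produces $\theta_1,\dots,\theta_d\in S^{d-1}\setminus N$ that are linearly independent.

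Then I would apply Theorem~\ref{t:abscont} to $E$ with these directions $\theta_1,\dots,\theta_d$: its hypotheses — $E$ bounded measurable of positive measure, the $\theta_i$ linearly independent, and $R_{\theta_i}\chi_E$ absolutely continuous modulo nullsets — are all met, so a translate of $E$ can be reconstructed using $d$ sets, which is the assertion of the corollary (the restatement "there are measurable sets $T_1,\dots,T_d$ with $\la^d((E+x)\cap T_i)\ne\la^d((E+x')\cap T_i)$ for some $i$ whenever $x\ne x'$" is just the unwinding of the definition of reconstruction).

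There is essentially no obstacle here: the corollary is a pure assembly of Theorem~\ref{t:abscontsections} (which does the analytic work of producing absolutely continuous Radon transforms in almost every direction) and Theorem~\ref{t:abscont} (which does the geometric work of turning $d$ good directions into $d$ test sets). The only point requiring a sentence of justification — and the closest thing to a "hard part" — is the elementary linear-algebra/measure observation that a co-null subset of the sphere spans $\R^d$, i.e. contains $d$ linearly independent vectors; this follows because any proper linear subspace meets $S^{d-1}$ in a null set.
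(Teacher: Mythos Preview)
Your proposal is correct and is exactly the paper's approach: the corollary is stated as an immediate combination of Theorem~\ref{t:abscont} and Theorem~\ref{t:abscontsections}. Your extra sentence justifying that a co-null subset of $S^{d-1}$ contains $d$ linearly independent vectors is a harmless elaboration of a point the paper leaves implicit.
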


We do not know whether Corollary~\ref{c:posmeasure} holds for $d=1$ and $d=2$.
Our method clearly cannot work for $d=1$.
The next theorem shows that we cannot obtain
Corollary~\ref{c:posmeasure} for $d=2$ the same way,
since Theorem~\ref{t:abscontsections} fails in $\R^2$ in the following strong
sense.

\begin{theorem}
There exists a bounded measurable set $K$ in $\R^2$ such that for every 
direction $\theta$ the Radon transform in direction $\theta$ 
does not agree almost everywhere with a continuous function.
\end{theorem}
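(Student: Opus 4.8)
The plan is to construct $K$ as a carefully chosen union of horizontal strips whose widths vary rapidly enough to destroy absolute continuity of \emph{every} directional Radon transform. The starting observation is that for a set of the form $K = \bigcup_j (\R \times I_j)$ intersected with a bounded box, the Radon transform $R_\theta \chi_K$ in a direction $\theta$ close to vertical is essentially (after an affine reparametrisation) a ``smeared'' version of the one-dimensional function $t \mapsto \la^1(\{s : (s,t) \in K\})$, i.e.\ of the indicator-type function $\sum_j \chi_{I_j}$ convolved with a short averaging kernel; whereas in a direction close to horizontal the section-length function is governed by the \emph{boundary} structure of the $I_j$, namely by the locations and accumulation of their endpoints. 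The idea is to build the $I_j$ so that the endpoints accumulate at a set that is ``fat'' in a quantitative sense — e.g.\ a Cantor-type set of positive Lebesgue measure, or more robustly a set such that in every direction the section function has unbounded variation on a set of positive measure — so that $R_\theta \chi_K$ fails to be of bounded variation, hence fails to be absolutely continuous, on a positive-measure set of parameters, for every $\theta$ simultaneously.

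Concretely, first I would fix a dense sequence of rational directions $\theta_1, \theta_2, \dots$ and, at stage $n$, add a finite family of thin horizontal strips to the construction so as to create infinitely many ``oscillations'' of height bounded below in the section function of direction $\theta_n$ — the key point being that horizontal strips of width $w$ contribute, in a direction making angle $\alpha$ with the horizontal, a triangular bump of height $\sim w$ and base $\sim w/\sin\alpha$, so by choosing $w$ small relative to $\sin\alpha_n$ I can localise the contributions and control interference with the finitely many earlier directions. Taking the strips at stage $n$ with total measure $\le 2^{-n}$ keeps $K$ bounded and of finite measure. Then, for an \emph{arbitrary} direction $\theta$, I approximate $\theta$ by the $\theta_n$ and argue that the oscillations engineered for the $\theta_n$ persist — with comparable height — for all nearby directions, because the triangular-bump picture is stable under small perturbations of the angle; this is where the quantitative ``height bounded below'' bookkeeping pays off, and it is what upgrades ``bad for a dense set of directions'' to ``bad for every direction''.

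The step I expect to be the main obstacle is precisely this last robustness/uniformity argument: ensuring that a single set $K$ is bad for \emph{all} $\theta$, not merely for a dense $G_\delta$ of directions. One has to prevent the oscillations created for $\theta_n$ from cancelling out when viewed from a slightly different angle, and simultaneously prevent the oscillations designed for different stages from destructively interfering. I would handle this by a separation-of-scales scheme: the strips introduced at stage $n$ live at a geometric scale $\delta_n \downarrow 0$ chosen so small (depending on $\delta_1,\dots,\delta_{n-1}$ and on $\sin\alpha_n$) that, in any direction, the stage-$n$ bumps are both much narrower than the stage-$(n-1)$ features and far more numerous, so their oscillation survives in the total section function regardless of the coarser-scale background. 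Modulo making this scale bookkeeping precise — and recalling from the definition of bounded variation that infinitely many oscillations of height $\ge c > 0$ on a bounded interval already contradicts $BV$, hence contradicts absolute continuity, for that parameter range of positive measure — the construction goes through, and since the exceptional parameter set has positive measure, $R_\theta \chi_K$ cannot agree a.e.\ with any continuous function, for every $\theta$.
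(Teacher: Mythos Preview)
Your approach has a genuine gap, and in fact the construction you describe cannot work.

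The core problem is the horizontal-strip ansatz. Take $K=[0,1]\times E$ with $E\subset[0,1]$ measurable (your union of strips is exactly this with $E=\bigcup_j I_j$). A direct computation shows that for any direction $\theta=(\cos\alpha,\sin\alpha)$ with $\sin\alpha\neq 0$ and $\cos\alpha\neq 0$,
\[
(R_\theta\chi_K)(r)=\frac{1}{|\cos\alpha|}\,\la\bigl(E\cap[\,a(r),\,a(r)+\cot\alpha\,]\bigr)
\]
for an affine reparametrisation $a(r)$. In other words, $R_\theta\chi_K$ is (up to an affine change of variable) the convolution $\chi_E * \chi_{[0,L]}$ with $L=|\cot\alpha|$. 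This function is Lipschitz with constant at most $2$, hence absolutely continuous, for \emph{every} oblique direction --- no matter how wild $E$ is. Your ``triangular bumps'' do occur, but their heights are $O(w_n)$ and they are genuinely smoothed out by the averaging; there are no oscillations of height bounded below. So the scheme fails for all but the two axis directions.

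There is also a logical slip in the last paragraph: unbounded variation (equivalently, failure of absolute continuity) does \emph{not} imply failure to agree a.e.\ with a continuous function. The Weierstrass function, or $x\sin(1/x)$, are continuous and not BV. What you would actually need is that $R_\theta\chi_K$ has oscillation bounded below on a set of positive measure, which is strictly stronger and, by the convolution remark above, simply false for your $K$.

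The paper's construction is entirely different and much shorter. Start from a compact Besicovitch set $A\subset\R^2$ of measure zero (containing a unit segment in every direction), take open neighbourhoods $A_n\supset A$ with $\la(A_n)\le 2^{-n}$, let $(p_n)$ be dense in the unit disc, and set $K=\bigcup_n(A_n+p_n)$. Then $\la(K)\le 1$, but for every $\theta$ one has $(R_\theta\chi_K)(r)\ge 1$ on a \emph{dense open} subset of an interval of length $2$ (coming from the open neighbourhoods of the unit segments, translated densely). If $R_\theta\chi_K$ agreed a.e.\ with a continuous function, that function would be $\ge 1$ on the whole interval, forcing $\int R_\theta\chi_K\ge 2>\la(K)$, a contradiction. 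The Besicovitch set is what supplies uniformity over \emph{all} directions in one stroke, which is exactly the step your scale-separation bookkeeping was meant to handle but cannot.
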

\begin{proof}
We call a planar set a Besicovitch set if it is measurable and it contains a unit line segment in every direction.
It is well known that there exists a compact Besicovitch set of measure zero, let $A$ be such a set. For each $n\ge 1$, let $A_n$ be an open neighbourhood of $A$ of Lebesgue measure at most $1/2^n$. Let $p_i$ be a sequence of points dense in the unit disc. Take $K=\bigcup_{n=1}^\infty A_n+p_n$. Then the measure of $K$ is at most $1$. Since $A$ contains a unit line segment in every direction, for every $\theta$ the Radon transform (the measure function of the sections) $(R_\theta \chi_K)(x)\ge 1$ for $x\in U_\theta$ where $U_\theta$ is a dense open subset of an interval of length $2$. Suppose that $R_\theta \chi_K$ agrees with a continuous function almost everywhere. Then $(R_\theta \chi_K)(x) \ge 1$ on an interval of length $2$ (almost everywhere), thus $\int R_\theta \chi_K\ge 2$, which contradicts the fact that the measure of $K$ is at most $1$.
\end{proof}

If we require only the continuity of $R_\theta \chi_K$ 
then it is enough to assume that the boundary of $K$ has Hausdorff dimension less
than $2$. Since we do not need this result, we only sketch the proof.

\begin{theorem}\label{Besicovitch}
Let $K$ be a bounded Borel
set in $\R^2$ such that $\partial K$ has Hausdorff dimension less than $2$.
Then the Radon transform of $\chi_K$ in direction $\theta$ (that is, the measure function of the sections of $K$ in direction $\theta$) is continuous for almost every $\theta$.
\end{theorem}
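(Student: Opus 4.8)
The plan is to reduce continuity of $R_\theta\chi_K$ at a point $r_0$ to a statement about the measure of a thin slab. Fix a direction $\theta$ and write $S_r = K\cap\{x:\langle x,\theta\rangle=r\}$ for the section. A standard fact (an elementary consequence of the monotone/dominated convergence theorem together with $\chi_K$ being integrable and $K$ bounded) is that $r\mapsto (R_\theta\chi_K)(r)=\la^{d-1}(S_r)$ is upper semicontinuous at almost every $r$ wherever $\partial K$ meets the hyperplane in a set of zero $(d-1)$-measure, and it is continuous at $r_0$ precisely when $\la^{d-1}\bigl(\partial K\cap\{x:\langle x,\theta\rangle=r_0\}\bigr)=0$, i.e.\ when the hyperplane slice of $\partial K$ is $\la^1$-null. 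Indeed, the jump of the section-measure function at $r_0$ is controlled from above by the measure of the slice of the boundary, since points of $K^\circ$ contribute continuously and points outside $\overline K$ contribute nothing. So it suffices to show: for almost every $\theta\in S^1$, the slice $\partial K\cap\{\langle x,\theta\rangle=r\}$ has $\la^1$-measure zero for \emph{every} $r\in\R$.

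The key input is that $\partial K$ has Hausdorff dimension $s<2$. First I would use the fact that a set of Hausdorff dimension $s<2$ cannot contain a line segment in more than a null set of directions; more quantitatively, I would appeal to a projection/slicing theorem. The cleanest route is the Marstrand--Mattila slicing theorem: if $B\su\R^2$ is Borel with $\dim_H B = s < 1$, then for every direction $\theta$ and $\la^1$-a.e.\ $r$, the slice $B\cap\{\langle x,\theta\rangle=r\}$ is empty; and if $1\le s<2$, then for $\la^1$-a.e.\ $r$ the slice has dimension $s-1<1$, hence $\la^1$-measure zero. This handles almost every $r$ for every $\theta$, but we need \emph{every} $r$ for almost every $\theta$. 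To upgrade, observe that the set of pairs $(\theta,r)$ for which the slice has positive $\la^1$-measure is, by the slicing theorem applied with the roles of the two coordinates interchanged, contained (up to a null set in the product) in a set whose $\theta$-sections are countable: a set $B$ of $\sigma$-finite $\la^1$-measure can have only countably many parallel slices of positive length, and $\partial K$ with $\dim_H<2$ has $\la^2(\partial K)=0$, so by Fubini applied in the rotated coordinates $(\langle x,\theta\rangle,\langle x,\theta^\perp\rangle)$, for \emph{every} $\theta$ only countably many $r$ give a slice of positive length. That is still ``for every $\theta$, countably many bad $r$'', not good enough for continuity at every $r$.

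So the real argument must be: for almost every $\theta$ there are \emph{no} bad $r$ at all. Here I would use that $\partial K$, having $\dim_H<2$, has finite $\mathcal H^t$-measure for some $t<2$ after decomposing into countably many pieces (or is $\sigma$-finite with respect to $\mathcal H^t$), and invoke the fact that for an $\mathcal H^t$-finite set with $t<2$, the orthogonal projection onto a line has $\la^1$-measure zero for almost every direction when $t<1$, while for $1\le t<2$ one uses that the ``radial/slice'' integral $\int_{S^1}\sum_{r}\la^1\bigl(\partial K\cap\{\langle x,\theta\rangle=r\}\bigr)\,d\theta$ can be bounded by a constant times $\mathcal H^1$-type energy of $\partial K$, which vanishes because $\partial K$ carries no positive $\mathcal H^1$ mass on any line in a positive-measure set of directions — this is exactly a Besicovitch-type statement, and it is where the hypothesis $\dim_H\partial K<2$ (rather than just $\la^2(\partial K)=0$) is essential, since the previous theorem shows $\la^2(\partial K)=0$ alone is insufficient. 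Concretely, I would show $\int_{S^1}\#\{r:\la^1(\text{slice})>0\}\,d\theta = 0$ by a covering argument: cover $\partial K$ by squares of side $\delta$ with $\sum \delta^t$ bounded; a slice of length $\ell$ in direction $\theta$ forces $\gtrsim \ell/\delta$ of these squares to line up within angle $\delta/\ell$ of $\theta^\perp$; summing over all slices and integrating over $\theta$ gives a bound of order $\sum\delta^{t-1}\to 0$ as $\delta\to 0$ when $t<1$, and a more careful version (splitting slices by dyadic length) handles $1\le t<2$. The main obstacle is precisely this last step: promoting the ``a.e.\ $r$'' conclusion of classical slicing to ``every $r$, for a.e.\ $\theta$,'' which genuinely requires a dimension hypothesis on $\partial K$ and a quantitative Besicovitch-style estimate rather than a soft measure-theoretic argument. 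Once the slice-is-null statement holds for a.e.\ $\theta$ and every $r$, continuity of $R_\theta\chi_K$ at every $r$ follows from the upper/lower semicontinuity observation above, completing the proof.
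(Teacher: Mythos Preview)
Your reduction is exactly the paper's: discontinuity of $R_\theta\chi_K$ at some $r$ forces the line $\{\langle x,\theta\rangle=r\}$ to meet $\partial K$ in a set of positive $\lambda^1$-measure (the paper derives this from upper semicontinuity of $R_\theta\chi_{\overline K}$ together with lower semicontinuity of $R_\theta\chi_K$ at points where the boundary slice is $\lambda^1$-null), so it suffices to show that for almost every $\theta$ no such line exists.

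For this last step the paper does not attempt a direct covering or slicing argument. It simply contraposes and invokes the known generalization of the Kakeya dimension theorem (citing Wolff): if a planar set admits, for a positive-measure set of directions $\theta$, a line perpendicular to $\theta$ meeting it in a set of positive $\lambda^1$-measure, then the set has Hausdorff dimension $2$. Since $\dim_H\partial K<2$, the bad set of directions is null, and we are done. Your detour through Marstrand--Mattila slicing is unnecessary, and the intermediate claim that for every $\theta$ only countably many $r$ give a slice of positive length is not justified by the Fubini reasoning you give (it would need $\sigma$-finiteness of $\mathcal H^1|_{\partial K}$, which is not assumed). Your final covering sketch is in effect an attempt to reprove the Kakeya dimension-$2$ result from scratch; the idea is sound but the execution is loose (a slice of positive $\lambda^1$-measure need not be a segment of a fixed length $\ell$, so the ``$\ell/\delta$ squares lining up within angle $\delta/\ell$'' heuristic needs more care). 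The overall strategy matches the paper; the paper is simply shorter because it cites the hard geometric input rather than reproving it.
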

\begin{proof} (Sketch)
Let $\clK$ denote the closure of $K$. If $R_\theta \chi_\clK \neq R_\theta \chi_K$, then there exists a line perpendicular to $\theta$ which intersects $\partial K$ in a set of positive (one-dimensional Lebesgue) measure.

Since $\overline{K}$ is compact, $R_\theta \chi_\clK$ is easily seen to be upper semi-continuous for every $\theta$.
%
If $\partial K$ has zero one-dimensional Lebesgue measure on the line $\{x\in\R^2 : \langle x,\theta \rangle=a\}$, then $R_\theta \chi_K$ is lower semi-continuous at $a$.

From these observations it follows that if $R_\theta \chi_K$ is not continuous, then there exists a line perpendicular to $\theta$ which intersects $\partial K$ in a set of positive (one-dimensional Lebesgue) measure.

Now suppose to the contrary that $R_\theta \chi_K$ is not continuous in positively many directions. Then
there are positively many directions in which there are lines which
intersect $\partial K$ in a set of positive (one-dimensional Lebesgue)
measure. It is well known that this implies that $\partial K$ has Hausdorff
dimension $2$. (This is a slight generalization of the fact that every planar
Besicovitch set must have Hausdorff dimension $2$, cf. \cite[Proposition 1.5 and Lemma 1.6]{Wo}.)
\end{proof}

For absolute continuity of $R_\theta \chi_K$ we need to assume more. Recall
the definition of Hausdorff measure (and dimension) from \cite{Fa} or
\cite{Ma} and also that by the length of a set $A$ we mean $\iH^1(A)$, that
is, the 1-dimensional Hausdorff measure of $A$. A set is
rectifiable if it can be covered by countably many Lipschitz curves and an
$\iH^1$-null set. A set is purely unrectifiable if it intersects every
rectifiable set (equivalently, every Lipschitz curve) in an $\iH^1$-null set.

\begin{theorem}\label{t:rectbound}
Let $K$ be a compact set in $\R^2$ with rectifiable boundary
of finite length. 
Then $R_\theta \chi_K$ is absolutely continuous for all but 
countably many $\theta$.
\end{theorem}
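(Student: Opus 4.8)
The plan is to show that $(R_\theta\chi_K)'$ exists as an $L^1$ function for almost every $\theta$ and then upgrade ``a.e.\ $\theta$'' to ``all but countably many $\theta$'' by exploiting the continuity of $\theta\mapsto R_\theta\chi_K$ in a suitable sense together with a monotonicity/semicontinuity argument. The geometric heart of the matter is the coarea-type formula: if $\partial K$ is rectifiable of finite length $\iH^1(\partial K)=L<\infty$, then for every direction $\theta$ we can parametrize $\partial K$ by countably many Lipschitz arcs, and the one-sided derivative of $R_\theta\chi_K$ at $r$ is controlled by the measure (with multiplicity) of $\partial K\cap\{\langle x,\theta\rangle=r\}$ counted with the appropriate Jacobian factor $|\langle \nu(x),\theta\rangle|$, where $\nu$ is the (a.e.\ defined) unit normal to the rectifiable curve. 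Concretely, the key identity I would establish is that $R_\theta\chi_K$ is locally absolutely continuous with
\[
\int_\R \left| (R_\theta\chi_K)'(r)\right|\,dr \;\le\; \int_{\partial K} |\langle \nu(x),\theta\rangle|\, \sqrt{1-\langle\nu(x),\theta\rangle^2}^{\,-1}\;d\iH^1(x)
\]
or, more robustly and avoiding the Jacobian blow-up, that $\Var(R_\theta\chi_K)\le \iH^1(\pr_{\theta^\perp}(\partial K))<\infty$ where $\pr_{\theta^\perp}$ is orthogonal projection onto the line spanned by $\theta$ — the total variation of the section-measure function is at most the length of the ``shadow'' of $\partial K$ on the $\theta$-axis, which is finite since projections do not increase $\iH^1$. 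This already gives bounded variation for \emph{every} $\theta$; the issue is ruling out a singular (jump or Cantor) part.

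Next I would handle absolute continuity. A jump in $R_\theta\chi_K$ at $r_0$ forces a positive-length piece of $\partial K$ to lie in the line $\{\langle x,\theta\rangle=r_0\}$; since $\partial K$ is rectifiable with finite length, for each $\theta$ there are at most countably many such values $r_0$, and across all $\theta$ in a fixed direction this is controlled — but a rectifiable curve of finite length can contain a segment in at most \emph{one} direction unless it has a positive-length linear piece, and even then only countably many such pieces. So the set of ``bad'' directions where a jump occurs is countable. For the Cantor (singular continuous) part: by a rectifiability/area-formula argument, $\iH^1$ restricted to $\partial K$ decomposes, and the pushforward of $\iH^1\!\restriction\!\partial K$ under $x\mapsto\langle x,\theta\rangle$ is, for a.e.\ $\theta$, absolutely continuous with respect to $\leb$ — this is where I would invoke a Besicovitch–Federer type projection statement, or more simply the fact that for a rectifiable set $E$, the projection $\pr_\theta(E)$ has, for a.e.\ $\theta$, the property that $\iH^1\!\restriction\!E$ pushes forward to an $L^1$ density (the ``multiplicity function'' of the projection is in $L^1$, with $L^1$ norm $\int|\langle\nu,\theta\rangle|\,d\iH^1$). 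Combining, for a.e.\ $\theta$ the measure $dR_\theta\chi_K$ is absolutely continuous; intersecting with the cocountable set of jump-free directions and doing the semicontinuity bookkeeping upgrades this to all but countably many $\theta$.

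The main obstacle I anticipate is the upgrade from ``almost every'' to ``all but countably many''. Absolute continuity is not a closed condition, so I cannot simply take limits; instead I would argue as follows. The function $R_\theta\chi_K$ always has bounded variation with $\Var(R_\theta\chi_K)\le L$ uniformly. The map $\theta\mapsto R_\theta\chi_K$ is continuous from $S^1$ into $L^1(\R)$ (by dominated convergence, using compactness of $K$), and the singular part $S_\theta$ of the measure $dR_\theta\chi_K$ has total mass $|S_\theta|(\R)$ that is, I claim, lower semicontinuous in $\theta$ in a way that, combined with the a.e.\ vanishing, forces $\{\theta:|S_\theta|(\R)>0\}$ to be countable — precisely because each such $\theta$ must ``use up'' a definite chunk of the finite budget $L$ of length of $\partial K$ projected in that direction, and a finite-length rectifiable curve can have only countably many directions in which its projection fails to be $L^1$-absolutely-continuous (each bad direction corresponding to a positive-$\iH^1$ subset of $\partial K$ whose tangent is perpendicular to $\theta$, and these subsets are essentially disjoint for distinct $\theta$). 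Making this ``essentially disjoint, hence countably many'' step precise — via the approximate tangent field of the rectifiable curve and a pigeonhole on $\iH^1(\partial K)$ — is the crux, and I would expect the cleanest route is to split $\partial K$ into a rectifiable-curve part and reduce to the statement that a single $C^1$ (or Lipschitz) curve $\gamma:[0,1]\to\R^2$ of finite length has $(R_\theta\chi_{\text{region}})'\in L^1$ for all $\theta$ outside a countable set determined by the directions of the (at most countably many) maximal line segments contained in $\gamma$.
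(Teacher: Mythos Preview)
Your final pigeonhole idea --- each bad direction $\theta$ forces a positive-$\iH^1$ subset of $\partial K$ with tangent perpendicular to $\theta$, these subsets are essentially disjoint for distinct $\theta$, and $\iH^1(\partial K)<\infty$ allows only countably many --- is exactly the heart of the paper's proof, so your approach is correct and essentially the same. The paper packages it more cleanly, and you should strip out the detours. Specifically: the paper never passes through ``a.e.\ $\theta$ then upgrade''; it bounds the increment directly by the \emph{pushforward measure} $\mu_\theta$ of $\iH^1|_{\partial K}$ under $x\mapsto\langle x,\theta\rangle$, proving $|(R_\theta\chi_K)(y)-(R_\theta\chi_K)(x)|\le \mu_\theta([x,y])$ for every $\theta$ (note: the pushforward measure, not the $\iH^1$-measure of the projected image as you wrote). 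Then absolute continuity of $R_\theta\chi_K$ follows whenever $\mu_\theta\ll\lambda$, and the pigeonhole shows this fails for at most countably many $\theta$: if not, uncountably many $\theta$ carry sets $A_\theta\subset\partial K$ of $\iH^1$-measure $>\eps$ with null projection, two of them must overlap in positive $\iH^1$-measure, and a rectifiable set of positive length cannot have null projection in two directions. Your lower-semicontinuity claim for $|S_\theta|(\R)$ is unjustified and unnecessary; your separate treatment of jump versus Cantor parts is also unnecessary; and your first displayed inequality with the Jacobian factor is garbled --- drop all three and go straight from the increment bound to the tangent-direction pigeonhole.
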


\begin{proof}
Let $\mu$ denote the $1$-dimensional Hausdorff measure restricted 
to $\partial K$, and let $\mu_\theta$ denote 
the projection of $\mu$ to the line in direction $\theta$. 

\begin{lemma}\label{abscontlength_new}
For any interval $[x,y]$ and direction $\theta$ we have
$$
\abs{(R_\theta \chi_K)(y) - (R_\theta \chi_K)(x)} \le \mu_\theta([x,y]).
$$
\end{lemma}

\begin{proof}
We can clearly assume that $\theta$ is vertical. 
Then
$\abs{(R_\theta \chi_K)(y) - (R_\theta \chi_K)(x)}$ is the difference of
the measures 
of $(\{x\}\times \R) \cap K$ and $(\{y\}\times \R) \cap K$ (two vertical lines
intersected with $K$).  Clearly, $\partial K$ must intersect those horizontal
segments $[(x, t), (y, t)]$ for which $(x, t) \in (\{x\}\times \R) \cap
K$ but $(y, t) \notin (\{y\}\times \R) \cap K$ or vice versa.  The measure
of these $t$ is at least $\abs{(R_\theta \chi_K)(y) - (R_\theta \chi_K)(x)}$, thus
the projection of $([x, y]\times \R) \cap \partial K$ on the vertical axis
has Lebesgue measure at least $\abs{(R_\theta \chi_K)(y) -
  (R_\theta \chi_K)(x)}$. 
Since projections do not increase Hausdorff measure and Lebesgue measure on a 
line agrees with the $1$-dimensional Hausdorff measure, this implies that
$$
\abs{(R_\theta \chi_K)(y) - (R_\theta \chi_K)(x)} \le 
\cah^1(([x,y]\times \R)\cap \partial K) = \mu ([x,y]\times\R)=
\mu_\theta([x,y]).
$$
\end{proof}

\begin{lemma}\label{pu222}
For every direction $\theta$, 
if $\mu_\theta$ is absolutely continuous with respect to the Lebesgue measure
then the function $R_\theta \chi_K$ is absolutely continuous.
\end{lemma}
\begin{proof}
Suppose $\theta$ is such that $\mu_\theta$ is absolutely continuous with respect to the Lebesgue measure.
Since $\partial K$ has finite length, $\mu$ and $\mu_\theta$ are finite measures.
Therefore the Radon--Nikodym derivative of $\mu_\theta$ is in $L^1$, and thus
the function $x\mapsto \mu_\theta((-\infty, x])$ is absolutely continuous. 
Recall that a real function $f$ is absolutely continuous if and only if 
for every $\eps>0$ there exists $\delta>0$ such that for every finite system of disjoint intervals $[x_j, y_j]$ satisfying $\sum_j \abs{y_j -x_j} < \delta$ we have $\sum_j \abs{f(y_j) - f(x_j)} <\eps$.
Thus, by Lemma~\ref{abscontlength_new}, the absolute continuity of 
the function $x\mapsto \mu_\theta((-\infty, x])$
 implies the absolute continuity of the function 
$R_\theta \chi_K$.
\end{proof}
To finish the proof of Theorem~\ref{t:rectbound} we have to show that $\mu_\theta$ is absolutely continuous for all but countably many $\theta$. Suppose to the contrary that for uncountably many $\theta$, there are Borel sets $A_\theta \subset\partial K$ with $\cah^1(A_\theta)>0$, such that the projection of $A_\theta$ to the line in direction $\theta$ has Lebesgue measure zero. Then there is an $\eps>0$ such that $\cah^1(A_\theta)>\eps$ for uncountably many $\theta$. As $\cah^1(\partial K)<\infty$, there are $\theta$ and $\theta'$ such that $\cah^1(A_\theta \cap A_{\theta'})>0$. 
Therefore
$A_\theta \cap A_{\theta'}$ is a rectifiable set of positive length, and there 
are two directions in which the projection has Lebesgue measure zero.
It is well-known (see e.g. in \cite[18.10 (4)]{Ma}) that this is impossible.
\end{proof}

\begin{remark}
The condition that the boundary of $K$ has finite length cannot be omitted in
Theorem~\ref{t:rectbound}.
There exists a compact set with rectifiable boundary of $\sigma$-finite
length so that for \emph{every} direction the Radon transform of its 
characteristic function is not of bounded variation, hence not absolutely
continuous.
%
We sketch the random construction. 

Let $A_N$ be the random compact set we obtain by decomposing the unit square 
into $N\times N$ many squares of side-length $1/N$ and keeping each of them independently with
probability $1/2$. It can be shown that the total variation of $R_\theta \chi_{A_N}$ is at least $N^{1/2-\eps}$ for every $\theta$, with probability tending to $1$ as $N\to \infty$.

Now let
$$A=\{(0,0)\} \cup \bigcup_{k=0}^\infty \left(\alpha_k A_{N_k} + \frac{1}{2^k}\right), $$
where $\alpha_k\to 0$ and $N_k\to\infty$ sufficiently rapidly. Then $A$ is compact and $\partial A$ is rectifiable of $\sigma$-finite length. 
It can be shown that, with probability $1$, $R_\theta \chi_{A}$ is not of bounded variation for any $\theta$.

\end{remark}

\begin{cor}\label{c:rectbound}
Let $E$ be a bounded measurable set in $\R^2$ with positive Lebesgue measure
and rectifiable boundary of finite length. 
Then a translate of $E$ can be reconstructed using $2$ test sets.
\end{cor}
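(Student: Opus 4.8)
The plan is to deduce Corollary~\ref{c:rectbound} by combining the two main tools that have already been assembled in this section, namely Theorem~\ref{t:abscont} (which reduces reconstruction of a translate to finding $d$ linearly independent directions in which the Radon transform of the characteristic function is absolutely continuous modulo nullsets) and Theorem~\ref{t:rectbound} (which guarantees absolute continuity of $R_\theta\chi_K$ for all but countably many $\theta$ when $K$ is compact with rectifiable boundary of finite length). The only nontrivial issues are (i) passing from the compact set hypothesis of Theorem~\ref{t:rectbound} to the merely bounded measurable set $E$ of the corollary, and (ii) producing $d=2$ such directions that are moreover linearly independent, which here is immediate once we know only countably many directions are excluded.

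First I would reduce to the compact case. Since $E$ is bounded and measurable with $\la^2(E)>0$ and $\iH^1(\partial E)<\infty$, replacing $E$ by its closure $\overline E$ changes it only on the boundary, which has finite length and hence $\la^2(\partial E)=0$; so $\la^2((E+x)\cap T)=\la^2((\overline E+x)\cap T)$ for every test set $T$ and every $x$, and $\overline E$ is compact with the same (rectifiable, finite-length) boundary. Thus it suffices to reconstruct a translate of the compact set $\overline E$, and I may and do assume from now on that $E$ itself is compact.

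Next I would apply Theorem~\ref{t:rectbound} to $K=E$: it yields a countable set $N\su S^1$ outside of which $R_\theta\chi_E$ is absolutely continuous. Since $S^1$ is uncountable, $S^1\sm N$ is nonempty, and in fact it contains two linearly independent unit vectors $\theta_1,\theta_2$ — pick any $\theta_1\notin N$, then any $\theta_2\in S^1\sm(N\cup\{\theta_1,-\theta_1\})$, which is possible as we are removing only countably many points from $S^1$; two noncollinear vectors in $\R^2$ are linearly independent. Now Theorem~\ref{t:abscont} applied with $d=2$ to $E$, $\theta_1$, $\theta_2$ produces measurable test sets $T_1,T_2\su\R^2$ reconstructing a translate of $E$, which is exactly the assertion of the corollary.

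I do not expect any genuine obstacle here: the proof is a short assembly of earlier results, and the only points requiring a word of care are the reduction to the compact case (handled by $\la^2(\partial E)=0$) and the observation that discarding countably many directions from $S^1$ still leaves a pair spanning $\R^2$. Both are routine, so the write-up can be kept to a few lines.
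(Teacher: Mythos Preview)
Your proposal is correct and follows essentially the same approach as the paper: apply Theorem~\ref{t:rectbound} to the closure $\overline E$ to get absolute continuity of the Radon transforms for all but countably many directions, then invoke Theorem~\ref{t:abscont}. The paper is marginally slicker in that it does not replace $E$ by $\overline E$ but instead observes that $R_\theta\chi_E=R_\theta\chi_{\overline E}$ almost everywhere (since $\overline E\sm E\su\partial E$ has measure zero) and then uses the ``modulo nullset'' clause of Theorem~\ref{t:abscont} directly for $E$; your reduction to the compact case is an equivalent bookkeeping choice. One small imprecision: you assert that $\overline E$ has \emph{the same} boundary as $E$, but in general only $\partial\overline E\su\partial E$ holds --- this inclusion is all you need, since a subset of a rectifiable set of finite length is again rectifiable of finite length.
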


\begin{proof}
We apply Theorem~\ref{t:rectbound} for $K=\overline E$.
Since $K\sm E \su \partial E$ has Lebesgue measure zero,
$R_\theta \chi_E$ equals almost everywhere to $R_\theta \chi_K$ for every
$\theta$, so Theorem~\ref{t:abscont} can be applied. 
\end{proof}

From Theorem~\ref{t:finiteint} and
Corollaries~\ref{c:posmeasure}~and~\ref{c:rectbound} we get the following
in any dimension.

\begin{cor}
A translate of a fixed finite union of bounded
convex sets in $\R^d$ ($d=1,2,\ldots$) can 
be reconstructed using $d$ test sets.
\end{cor}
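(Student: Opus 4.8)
The plan is to reduce the statement about a finite union of bounded convex sets to the two cases already handled: finite unions of intervals (Theorem~\ref{t:finiteint}) for $d=1$, and translates of a single bounded measurable set with suitable boundary/measure conditions (Corollaries~\ref{c:posmeasure} and \ref{c:rectbound}) for $d\ge 2$. So let $E = C_1 \cup \cdots \cup C_n$ with each $C_j$ a bounded convex subset of $\R^d$. First I would dispose of the trivial degeneracies: a bounded convex set of empty interior has Lebesgue measure zero, so it may as well be discarded from the union (it does not affect $\la^d((E+t)\cap T)$ for any $T$); hence we may assume each $C_j$ has nonempty interior, and in particular $E$ has positive Lebesgue measure. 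If after this reduction $E$ is empty, there is nothing to reconstruct; otherwise $\la^d(E)>0$.

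Next I would treat $d\ge 3$: here Corollary~\ref{c:posmeasure} applies verbatim, since $E$ is a bounded set of positive Lebesgue measure, and we are done with $d$ test sets. For $d=2$, the plan is to invoke Corollary~\ref{c:rectbound}, for which we must verify that $\partial E$ is rectifiable of finite length. Since $E = \bigcup_{j=1}^n C_j$, we have $\partial E \su \bigcup_{j=1}^n \partial C_j$. The boundary of a bounded convex set in $\R^2$ with nonempty interior is a closed convex curve, which is rectifiable with finite length (bounded, e.g., by the perimeter of any enclosing square, by monotonicity of perimeter for nested convex bodies). A finite union of rectifiable sets of finite length is again rectifiable of finite length, so $\partial E$ satisfies the hypothesis of Corollary~\ref{c:rectbound}, giving reconstruction with $2$ test sets.

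Finally, for $d=1$: a bounded convex subset of $\R$ with nonempty interior is a bounded interval (open, closed, or half-open), and the discarded zero-measure pieces are points, which again do not affect any $\la^1((E+t)\cap T)$. After removing them, $E$ differs by a null set from a finite union of closed intervals $E'$; since $\la^1((E+t)\cap T) = \la^1((E'+t)\cap T)$ for all $t$ and all measurable $T$, Theorem~\ref{t:finiteint} applied to $E'$ produces a single test set $T$ with $\la^1((E+t)\cap T)$ injective in $t$. This covers $d=1$.

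I do not expect any genuine obstacle here: the statement is a routine corollary assembled from the three preceding results, and the only points needing care are the measure-zero bookkeeping (so that degenerate convex pieces and the open/closed ambiguity of intervals are harmless) and the observation that finite unions of finite-length rectifiable curves are finite-length rectifiable. The mild subtlety worth spelling out is why discarding lower-dimensional convex pieces is legitimate — namely that they are Lebesgue-null and reconstruction depends only on the Lebesgue measures $\la^d((E+t)\cap T_i)$, which are insensitive to null modifications of $E$.
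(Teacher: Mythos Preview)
Your proposal is correct and follows essentially the same approach as the paper, which simply states that the corollary follows from Theorem~\ref{t:finiteint} and Corollaries~\ref{c:posmeasure} and~\ref{c:rectbound}; you have spelled out the verifications (in particular for $d=2$ that $\partial E\su\bigcup_j\partial C_j$ is rectifiable of finite length) that the paper leaves implicit.
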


\section{Reconstruction of a magnified copy of a fixed set}
\label{s:magn} 

The first part of this section is analogous to the first part of the 
previous section but here the results follow from Theorem~\ref{t2} 
instead of Theorem~\ref{t1}.
  
\begin{theorem}\label{t:magn}
Let $E\su\R^d$ $(d\ge 2)$ be a bounded measurable set 
with positive Lebesgue measure. 
Suppose that
$\theta_1,\ldots,\theta_d\in S^{d-1}$ are linearly independent 
such that for each $i=1,\ldots,d$ the Radon transform of $\chi_E$ in direction $\theta_i$
is absolutely continuous modulo a nullset and 
there exist $C_1$, $C_2$ such that $K(\eps ,(R_{\theta_i} \chi_E)')\le C_1\exp(C_2\eps^{-1/3})$ for every $\eps>0$.

Then a set of the form $rE+x$, where $r\ge 1$ and $x\in\R^d$, can be
reconstructed using $d+1$ test sets.
\end{theorem}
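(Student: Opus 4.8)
The strategy is to mimic the proof of Theorem~\ref{t:abscont}, but using Theorem~\ref{t2} in place of Theorem~\ref{t1} so that the test sets work uniformly for every magnification factor $r\ge 1$. First, since $\R^d$ itself is a test set and $\la^d(rE+x) = r^d \la^d(E)$ depends only on $r$ (and $\la^d(E)>0$), one test set recovers $r$. It remains to recover $x\in\R^d$ using $d$ further test sets once $r$ is known.

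For each $i=1,\ldots,d$, apply Theorem~\ref{t2} to the function $f_i = R_{\theta_i}\chi_E$ (which we may assume is genuinely absolutely continuous, as modifying on a nullset does not affect the integrals below, and which satisfies the hypothesis \eqref{1/3} by assumption). This yields a measurable set $T_i\su\R$, a locally finite union of intervals, such that $b\mapsto \int_{T_i} f_i(\tfrac{t}{a}+b)\,dt$ is strictly monotone in $b$ for every $a\ge 1$. Now set $V_i = \{v\in\R^d : \langle v,\theta_i\rangle \in T_i\}$, exactly as in \eqref{Vi}. The computation leading to \eqref{e:integral} generalizes: one checks that
\[
\la^d\big((rE+x)\cap V_i\big) = r^{d-1}\int_{T_i} (R_{\theta_i}\chi_E)\Big(\tfrac{t}{r} - \tfrac{\langle x,\theta_i\rangle}{r}\Big)\,dt = r^{d-1}\int_{T_i} f_i\Big(\tfrac{t - \langle x,\theta_i\rangle}{r}\Big)\,dt.
\]
Rewriting the inner integral by the substitution $s = t - \langle x,\theta_i\rangle$ and then recognizing it as an instance of $\int_{T_i'} f_i(\tfrac{s}{r}+ b)\,ds$ with $b$ depending affinely on $\langle x,\theta_i\rangle$ — note here one must be slightly careful, as translating $T_i$ changes the set; the cleanest route is to observe directly that $\la^d((rE+x)\cap V_i) = r^{d-1}\int_{T_i} f_i\big(\tfrac{t}{r} + (-\tfrac{\langle x,\theta_i\rangle}{r})\big)\,dt$, which by Theorem~\ref{t2} (applied with $a = r$) is strictly monotone in $-\tfrac{\langle x,\theta_i\rangle}{r}$, hence strictly monotone in $\langle x,\theta_i\rangle$ once $r$ is fixed. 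Therefore, knowing $r$ (from the first test set) and $\la^d((rE+x)\cap V_i)$ determines $\langle x,\theta_i\rangle$.

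Finally, since $\theta_1,\ldots,\theta_d$ are linearly independent, the values $\langle x,\theta_1\rangle,\ldots,\langle x,\theta_d\rangle$ determine $x$ uniquely. Thus the $d+1$ test sets $\R^d, V_1,\ldots,V_d$ reconstruct $rE+x$: if $rE+x$ and $r'E+x'$ give the same measures against all of them, then first $r=r'$, then $\langle x,\theta_i\rangle = \langle x',\theta_i\rangle$ for all $i$, hence $x=x'$.

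The main subtlety — rather than a deep obstacle — is the bookkeeping around the magnification in the formula for $\la^d((rE+x)\cap V_i)$: one needs the exact form with $a=r$ inside the argument of $f_i$ so that Theorem~\ref{t2}'s uniform-in-$a$ monotonicity applies, and one must make sure the parameter in which monotonicity is claimed (namely $b = -\langle x,\theta_i\rangle/r$) is, for fixed $r$, a strictly monotone function of the quantity $\langle x,\theta_i\rangle$ we actually want to recover. The continuity/measurability of the maps involved and the reduction that a common value of all intersection measures forces equality are then routine, exactly as in Theorem~\ref{t:abscont}.
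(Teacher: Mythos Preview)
Your proof is correct and follows essentially the same route as the paper: use $\R^d$ to recover $r$, apply Theorem~\ref{t2} to each $R_{\theta_i}\chi_E$ to obtain $T_i\su\R$, set $V_i=\{v:\langle v,\theta_i\rangle\in T_i\}$, and use the identity $\la^d((rE+x)\cap V_i)=r^{d-1}\int_{T_i}f_i\big(\tfrac{t-\langle x,\theta_i\rangle}{r}\big)\,dt$ together with the uniform-in-$a$ monotonicity from Theorem~\ref{t2} to recover each $\langle x,\theta_i\rangle$. Your extra care in identifying $b=-\langle x,\theta_i\rangle/r$ and noting that, for fixed $r$, this is strictly monotone in $\langle x,\theta_i\rangle$ is exactly the right bookkeeping, and the paper's proof is the same argument stated more tersely.
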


\begin{proof}
As in the proof of Theorem~\ref{t:abscont}, we may assume that the Radon transforms are actually absolutely continuous.
By applying Theorem~\ref{t2} to the functions  
$R_{\theta_1} \chi_E,\ldots,R_{\theta_d} \chi_E$ we get measurable sets
$T_1,\ldots,T_d\su\R$ such that for each $i$ and $r\ge 1$ the integral
$\int_{T_i} (R_{\theta_i} \chi_E)(\frac{x}{r}-b) \,dx$ determines $b$.
For each $i$ let 
$V_i= \{a\in\R^d\ :\ \langle a,\theta_i \rangle \in T_i \}$.
One can easily check that
$$
\la^d((rE+v)\cap V_i)= 
r^{d-1} \int_{T_i} (R_{\theta_i} \chi_E)\left(\frac{x-\langle v, \theta_i \rangle}{r}\right) \,dx
$$ 
for any $v\in\R^p$.

Therefore, for any $r\ge 1$ the 
numbers $\la^d((rE+v)\cap V_1),\ldots,\la^d((rE+v)\cap V_d)$ 
determine $v$.
Let $V_{d+1}=\R^d$. Then $\la^d((rE+v)\cap V_{d+1})$ clearly 
determines $r$, 
which completes the proof.
\end{proof}

\begin{remark}
Since in Theorem~\ref{t2} 
the test set that determines $b$
can be chosen to be a locally finite union
of intervals,
we obtained that 
each of the first $d$ test sets of the above theorem 
(and of all of its corollaries)
can be chosen as finite union of parallel
layers (where by layer we mean a rotated image of a set of the form 
$[a,b]\times \R^{d-1}$) and one test set as $\R^d$.
\end{remark}

The above theorem can clearly be applied to many geometric objects.

\begin{cor}\label{c:magnified}
\begin{enumerate}
\item\label{largeball}
A ball of radius at least $1$ in $\R^d$ ($d\ge 2$) can be reconstructed 
using $d+1$ sets; that is, 
there are measurable sets $T_1, \dots,  T_{d+1} \su \RR^d$ such that if $(x,r) \neq (x', r')$, $x, x' \in \RR^d$, $r, r' \ge 1$ then $\la^d (B(x,r) \cap T_i) \neq \la^d (B(x',r') \cap T_i)$ for some $i \in \{ 1, \dots, d+1 \}$.
\item\label{polytope}
Let $E$ be a (not necessarily convex) polytope in $\R^d$ ($d\ge 2$). 
Then a magnified copy $rE+x$, where $r\ge 1$ and $x\in \R^d$  
can be reconstructed using $d+1$ test sets.
\end{enumerate}
\end{cor}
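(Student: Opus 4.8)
The plan is to deduce both parts directly from Theorem~\ref{t:magn}. Since $rE+x$ with $r\ge 1$ ranges exactly over the balls of radius $\ge 1$ when $E=B(0,1)$, and over the magnified copies of a polytope when $E$ is that polytope, it suffices in each case to exhibit $d$ linearly independent directions $\theta_1,\ldots,\theta_d\in S^{d-1}$ for which $R_{\theta_i}\chi_E$ is absolutely continuous modulo a nullset and $K(\eps,(R_{\theta_i}\chi_E)')$ satisfies the subexponential bound~(\ref{1/3}). My intention is to obtain that bound cheaply every time, by showing that $(R_{\theta_i}\chi_E)'$ is either of bounded variation on $\R$ — in which case $K(\eps,\cdot)$ is bounded above by its total variation — or, where that fails, locally $C^1$ off finitely many controlled singularities, so that Lemma~\ref{singularity} supplies the bound.

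For part~(1), take $E=B(0,1)$. By rotational symmetry $R_\theta\chi_E$ is the same function for every $\theta$, namely $c_d(1-r^2)_+^{(d-1)/2}$ for a positive constant $c_d$; this is absolutely continuous, since its derivative, $-c_d(d-1)\,r(1-r^2)_+^{(d-3)/2}$ on $(-1,1)$ and $0$ outside, is $L^1$. When $d\ge 3$ this derivative is bounded and has only finitely many local extrema on $(-1,1)$, hence is of bounded variation on $\R$, so $K(\eps,(R_\theta\chi_E)')$ is bounded. When $d=2$ the derivative $-r(1-r^2)^{-1/2}$ is $C^\infty$ and monotone on each one-sided neighbourhood of $\pm 1$ and is $O(|r\mp 1|^{-1/2})$ there, so $\pm 1$ are controlled singularities and Lemma~\ref{singularity} applies. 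Either way Theorem~\ref{t:magn} can be invoked with $\theta_1,\ldots,\theta_d$ an arbitrary basis of $\R^d$.

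For part~(2), the first step is to discard the bad directions: the $\theta\in S^{d-1}$ orthogonal to the affine hull of some positive-dimensional face of the polytope $E$ form a finite union of proper great subspheres, hence a set of spherical measure zero, so I can fix linearly independent $\theta_1,\ldots,\theta_d$ avoiding it. The substantive step is then to check, for such a $\theta$, that $r\mapsto\la^{d-1}(E\cap\{\langle x,\theta\rangle=r\})$ is continuous and piecewise polynomial with finitely many pieces: triangulate $E$ into finitely many simplices (introducing extra vertices if need be); between consecutive critical levels $\langle v,\theta\rangle$ (over the vertices $v$ of the triangulation) the section of each simplex has a fixed combinatorial type with vertices moving affinely in $r$, so its $(d-1)$-volume is a polynomial in $r$ of degree $\le d-1$; summing over the simplices, and using that $\theta$ avoids the bad set to get continuity across the critical levels, gives the claim. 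Hence $R_\theta\chi_E$ is absolutely continuous and $(R_\theta\chi_E)'$ is piecewise polynomial — bounded, with finitely many jumps — so of bounded variation on $\R$, and $K(\eps,(R_\theta\chi_E)')$ is bounded. Theorem~\ref{t:magn} then yields part~(2). The routine core, unpacking Theorem~\ref{t:magn}, is painless; I expect the two points needing a little care to be the continuity and piecewise-polynomiality of the section-volume function of a (possibly non-convex) polytope in a face-avoiding direction, which is the real geometric content and is the same fact already used for the corresponding translation statement, and the minor nuisance that for the planar disc the $K$-bound must be routed through the controlled-singularity Lemma~\ref{singularity} rather than through bounded variation as in all the other cases.
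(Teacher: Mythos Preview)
Your proposal is correct and takes essentially the same approach as the paper: reduce to Theorem~\ref{t:magn} by checking, for $d$ independent directions, that $R_\theta\chi_E$ is absolutely continuous and that $K(\eps,(R_\theta\chi_E)')$ grows subexponentially. The only cosmetic difference is that the paper routes \emph{every} case (ball in any $d\ge 2$, polytope in a face-avoiding direction) uniformly through Lemma~\ref{singularity}, whereas you note that for the polytope and for the ball with $d\ge 3$ the derivative is actually of bounded variation (so $K(\eps,\cdot)$ is outright bounded) and invoke Lemma~\ref{singularity} only for the planar disc; both verifications are equally short and your more explicit piecewise-polynomial description of the polytope section function is exactly the content the paper leaves as ``easy to check.''
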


\begin{proof}
It is easy to check that the assumptions of Lemma~\ref{singularity} hold for 
$(R_\theta \chi_E)'$ if $E$ is the unit ball or if
$E$ is a polytope and $\theta$ is not orthogonal to any of its faces. 
Thus we can apply Theorem~\ref{t:magn} to $E$ in both cases.
\end{proof}

\begin{remark}
By Theorem~\ref{t:interval} the above corollary does not hold for $d=1$.
\end{remark}

In the remaining part of this section we check the condition
of Theorem~\ref{t:magn} for more general sets.
For the most general theorem we need the following result concerning the Radon transforms, which
we can only prove for $d\ge 4$.

\begin{theorem}\label{t:sectionsformagn}
If $d\ge 4$ then for any bounded
measurable set $E\su\R^d$ the Radon transform $R_\theta \chi_E$
(that is, the measure function of the sections of $E$ in direction $\theta$) 
is absolutely continuous for almost every $\theta\in S^{d-1}$,
and 
$K(\eps, (R_\theta \chi_E)')\le C_\theta\eps^{-2}$,
where $C_\theta$ depends only on $E$ and $\theta$.
\end{theorem}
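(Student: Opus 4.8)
The plan is to fix one good direction $\theta$, reduce to a one-variable statement about $\psi:=R_\theta\chi_E$, and obtain the variation bound by mollifying $\psi'$ at a scale comparable to $\eps^2$; the point is that when $d\ge 4$ the function $\psi'$ has ``half a derivative'' to spare. Concretely: by Lemma~\ref{l:fourier} applied with $K=E$, for almost every $\theta\in S^{d-1}$ we have $\int_\R|\widehat{R_\theta\chi_E}(r)|^2|r|^p\,dr<\infty$ for every $0\le p\le d-1$, and since $d\ge 4$ this includes $p=2$ and $p=3$. Fix such a $\theta$ and put $\psi=R_\theta\chi_E$. Because $E$ is bounded, $\psi$ is supported in a fixed interval whose length $L$ depends only on $E$, and $\psi\in L^1\cap L^2$. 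The case $p=2$ together with Lemma~\ref{l:weak} (this is exactly Theorem~\ref{t:abscontsections}) shows that $\psi$ is absolutely continuous modulo a nullset, $\psi'\in L^2$, and $\widehat{\psi'}(r)=2\pi i r\,\hat\psi(r)$; hence the case $p=3$ rewrites as
\[
B_\theta:=\int_\R|\widehat{\psi'}(r)|^2\,|r|\,dr<\infty,
\]
i.e.\ $\psi'\in\dot{H}^{1/2}$. Together with $A_\theta:=\norm{\psi'}_2^2=\int_\R|\widehat{\psi'}(r)|^2\,dr<\infty$, these are the only inputs I use; both depend on $\theta$, which is fine since the constant in the conclusion is allowed to depend on $\theta$.

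For the quantitative bound, fix once and for all a $C^\infty$ function $\rho$ supported in $[-1,1]$ with $\int\rho=1$, set $\rho_\delta(x)=\delta^{-1}\rho(x/\delta)$ (so $\widehat{\rho_\delta}(r)=\hat\rho(\delta r)$ and $\hat\rho(0)=1$), and let $g_\delta=\psi'*\rho_\delta$ for $0<\delta\le 1$. Then $g_\delta$ is $C^\infty$ and supported in an interval of length $L+2$, so every passage between $L^1$ and $L^2$ below costs only the factor $\sqrt{L+2}$, a constant depending on $E$ alone. Using Plancherel together with the elementary estimates $|1-\hat\rho(s)|^2\le C\min(s^2,1)\le C|s|$ and $|\hat\rho(s)|\le C(1+|s|)^{-1}$ (the latter from the rapid decay of $\hat\rho$), I obtain
\[
\norm{\psi'-g_\delta}_2^2=\int_\R|\widehat{\psi'}(r)|^2\,|1-\hat\rho(\delta r)|^2\,dr\le C\,\delta\,B_\theta,
\]
and, splitting the next integral at $|r|=1/\delta$,
\[
\norm{g_\delta'}_2^2=\int_\R(2\pi r)^2|\widehat{\psi'}(r)|^2|\hat\rho(\delta r)|^2\,dr\le C\Bigl(\tfrac1\delta B_\theta+\tfrac1{\delta^2}A_\theta\Bigr)\le\frac{C_\theta'}{\delta^2}\qquad(0<\delta\le 1).
\]
Converting to $L^1$ gives $\norm{\psi'-g_\delta}_1\le C_\theta''\sqrt\delta$ and $\Var(g_\delta)=\norm{g_\delta'}_1\le C_\theta'''/\delta$. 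Given $\eps>0$, choose $\delta$ of order $\eps^2$ (and $\le 1$) so that $\norm{\psi'-g_\delta}_1<\eps$; then $K(\eps,\psi')\le\Var(g_\delta)\le C_\theta\,\eps^{-2}$ for all small $\eps$. For larger $\eps$ the bound is automatic, since $K(\cdot,\psi')$ is non-increasing and finite and it vanishes once $\eps>\norm{\psi'}_1$, so enlarging $C_\theta$ finishes it.

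I expect the only genuine content to be the $\dot{H}^{1/2}$ observation of the first paragraph: the case $p=3$ of Lemma~\ref{l:fourier}, available precisely because $d\ge 4$, puts $\psi'$ into $\dot{H}^{1/2}$, and this half derivative is exactly the right amount --- mollifying at scale $\delta$ gains $\sqrt\delta$ in $L^2$ (hence in $L^1$) while costing $1/\delta$ in total variation, so balancing $\sqrt\delta\sim\eps$ yields variation $\sim\eps^{-2}$. The rest is bookkeeping: keeping all supports inside an interval whose length depends on $E$ only, so that the $L^1$--$L^2$ conversions cost $E$-dependent constants; verifying the two elementary Fourier estimates on $\hat\rho$; and recalling from the proof of Lemma~\ref{l:weak} the identity $\widehat{\psi'}(r)=2\pi i r\,\hat\psi(r)$, which is what transfers the $p=3$ bound from $\hat\psi$ to $\widehat{\psi'}$. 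One should also note that $d\ge 4$ rather than $d\ge 3$ is essential for this estimate, even though absolute continuity of $R_\theta\chi_E$ for a.e.\ $\theta$ already holds when $d\ge 3$.
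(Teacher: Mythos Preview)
Your proof is correct and rests on the same Fourier-analytic input as the paper's: for almost every $\theta$ one has $\int|r|^3|\widehat{R_\theta\chi_E}(r)|^2\,dr<\infty$ (the $p=3$ case of Lemma~\ref{l:fourier}, available precisely when $d\ge 4$), which after passing to $\psi'$ says $\psi'\in\dot H^{1/2}$. From this point on the two arguments diverge in implementation. The paper approximates $\psi'$ by a \emph{sharp} Fourier cutoff $g_R=(\chi_{[-R,R]}\widehat{\psi'})\,\widehat{\ }(-\cdot)$ and then multiplies by $\chi_{[0,1]}$ to restore compact support; this forces an extra estimate on $\|g_R\|_\infty$ to control the jumps at the endpoints of the spatial truncation, and the final variation bound is $\Var(\chi_{[0,1]}g_R)\le 2\|g_R\|_\infty+\|\chi_{[0,1]}g_R'\|_1\lesssim R$, with $\|\psi'-\chi_{[0,1]}g_R\|_1\lesssim R^{-1/2}$. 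You instead mollify by a smooth bump $\rho_\delta$ in physical space, which keeps the approximant compactly supported automatically and removes the boundary term; your scale $\delta$ corresponds to the paper's $1/R$, and the same trade-off ($L^1$ error $\sim\sqrt\delta$, variation $\sim 1/\delta$) gives the identical rate $K(\eps,\psi')\le C_\theta\eps^{-2}$. Your route is a touch cleaner for exactly this reason, while the paper's sharp cutoff makes the $\dot H^{1/2}$ computation marginally more transparent; neither approach yields anything stronger than the other.
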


\begin{proof}
By Theorem~\ref{t:abscontsections}, $R_\theta \chi_E$ is absolutely continuous for almost every $\theta$.
Applying Lemma~\ref{l:fourier} for $p=3$ and $p=2$ 
we get that for almost every $\theta$, 
\beq\label{e:cubic}
\int |x|^3 |\widehat{R_\theta \chi_E}|^2(x) <\infty 
\qquad \text{and} \qquad
\int |x|^2 |\widehat{R_\theta \chi_E}|^2(x) <\infty.
\eeq
Fix such a $\theta$ and put $f=R_\theta \chi_E$. 
Thus, denoting the weak derivative of $f$ by $f'$,
\beq\label{e:deriv}
\int |x| |\widehat{f'}|^2(x) <\infty
\qquad \text{and} \qquad
\int |\widehat{f'}|^2 <\infty.
\eeq
We may assume that $f$ (and thus $f'$) is supported in $[0,1]$.
Note that $f'$ is in $L^1 \cap L^2$.
Let $g_R(x)=(\chi_{[-R,R]} \widehat{f'})\widehat{\;}\,(-x)$. 
Thus $g_R$ is $C^\infty$ and $\widehat{g_R}=\chi_{[-R,R]} \widehat{f'}$.
We will approximate $f'$ by $\chi_{[0,1]}g_R$ to get a bound on $K(\eps, f')$.

We have
\begin{align}\label{L1benk}
\norm{\chi_{[0,1]}g_R - f'}_1 & = \norm{\chi_{[0,1]}(g_R - f')}_1 
\le \norm{\chi_{[0,1]}(g_R - f')}_2
\le  \norm{g_R - f'}_2 \nonumber \\
& = \norm{\widehat{g_R} - \widehat{f'}}_2 = \norm{(1-\chi_{[-R,R]}) \widehat{f'}}_2 \\
& \le \norm{|x|^{1/2} R^{-1/2} \widehat{f'} (x) }_2 = R^{-1/2} \left(\int |x| |\widehat{f'}|^2(x) \,dx \right)^{1/2}. \nonumber 
\end{align}
We have to bound the total variation of $\chi_{[0,1]}g_R$. 
We will do this in two steps. First,
\begin{align}\label{boundvariation1}
\norm{\chi_{[0,1]} g_R'}_1 & \le \norm{\chi_{[0,1]} g_R'}_2 \le \norm{g_R'}_2  
= \norm{\widehat{g_R'}}_2 =
2\pi \norm{x\widehat{g_R} (x) }_2  \le 2\pi  \norm{x\chi_{[-R,R]}
  \widehat{f'}(x) }_2 \\
& \le 2\pi \norm{|x|^{1/2} R^{1/2} \widehat{f'} (x) }_2  = 2\pi R^{1/2} \left(\int |x| |\widehat{f'}|^2(x) \,dx \right)^{1/2}. \nonumber
\end{align}
Second,
\begin{align}\label{boundvariation2}
\norm{g_R}_\infty & \le \norm{\widehat{g_R}}_1 = \norm{\chi_{[-R,R]} \widehat{f'}}_1 \le 2R \norm{\widehat{f'}}_\infty
\le 2R \norm{f'}_1 \\
& \le 2R \norm{f'}_2  = 2R \norm{\widehat{f'}}_2 = 4\pi R
\norm{x\widehat{f} (x) }_2 \le 4\pi R \left( \norm{|x|^{3/2} \widehat{f} (x)
}_2^2 + \norm{\hat{f}}^2_2 \right)^{1/2},
\nonumber
\end{align}
where in the last inequality we used that $x^2\le 1$ on $[-1,1]$ and
$x^2\le |x|^3$ outside $[-1,1]$.

Combining \eqref{e:cubic}, \eqref{e:deriv},
\eqref{boundvariation1} and \eqref{boundvariation2} gives that the total variation of $\chi_{[0,1]}g_R$ is at most
$$2 \norm{g_R}_\infty + \norm{\chi_{[0,1]} g_R'}_1 \le c_1 R$$
for some finite positive constant $c_1$ (depending on $f$). Comparing this to \eqref{L1benk} gives that
$$K(c_2 R^{-1/2}, f')\le c_1R$$
for some $c_2>0$, and thus $K(\eps, f') \le C \eps^{-2}$.
\end{proof}

\begin{remark}
The proof of the previous theorem is simpler for $d\ge 5$. In these dimensions
we obtain from Lemma~\ref{l:fourier} that $r^2 \widehat{R_\theta \chi_E}(r) \in
L^2$ and $r \widehat{R_\theta \chi_E}(r) \in L^2$ for almost every $\theta$. 
Then by Lemma \ref{l:weak}, $R_\theta \chi_E$ is absolutely continuous (we may
ignore the nullset) and $(R_\theta \chi_E)' \in L^2$. It is easy to see that the usual proof of
the formula $\widehat{f'} (r) = 2 \pi i r \hat{f}$ works if we only assume that
$f$ is absolutely continuous modulo a nullset. Hence
$r((R_\theta \chi_E)')\widehat{\;} (r) = r(2 \pi i r \widehat{R_\theta \chi_E}) \in L^2$, so
a second application of Lemma \ref{l:weak} yields that $(R_\theta \chi_E)'$ is
absolutely continuous (ignoring the nullset again).
Therefore $K(\eps, (R_\theta \chi_E)') \le \Var((R_\theta \chi_E)')$ is bounded in this case.
\end{remark}

Theorems~\ref{t:magn}~and~\ref{t:sectionsformagn} immediately imply 
the following.

\begin{cor}\label{c:genmagn}
Let $d\ge 4$ and let $E\subset\R^d$ be a bounded set of positive Lebesgue measure. Then a set of the form $rE+x$, where $r\ge 1$ and $x\in\R^d$, can be reconstructed using $d+1$ sets; that is, there are measurable sets $T_1, \dots,  T_{d+1} \su \RR^d$ such that if $(x,r) \neq (x', r')$, $x,x' \in \RR^d$, $r, r' \ge 1$ then $\la^d ((rE+x) \cap T_i) \neq \la^d ((r'E+x') \cap T_i)$ for some $i \in \{ 1, \dots, d+1 \}$.
\end{cor}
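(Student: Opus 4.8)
The plan is simply to combine Theorem~\ref{t:sectionsformagn} with Theorem~\ref{t:magn}; there is no genuinely new content. Fix a bounded set $E\su\R^d$ of positive Lebesgue measure with $d\ge 4$. By Theorem~\ref{t:sectionsformagn} there is a set $N\su S^{d-1}$ of zero spherical measure such that for every $\theta\in S^{d-1}\sm N$ the Radon transform $R_\theta \chi_E$ is absolutely continuous and there is a constant $C_\theta$ (depending only on $E$ and $\theta$) with $K(\eps,(R_\theta \chi_E)')\le C_\theta\eps^{-2}$ for every $\eps>0$.

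Next I would observe that this polynomial bound implies, for each fixed $\theta\notin N$, the exponential-type bound required in Theorem~\ref{t:magn}: there exist $C_1,C_2$ with $K(\eps,(R_\theta \chi_E)')\le C_1\exp(C_2\eps^{-1/3})$ for every $\eps>0$. Indeed, with $t=\eps^{-1/3}$ we have $\eps^{-2}=t^6$, and $t^6\exp(-t)$ is bounded on $(0,\infty)$ (it tends to $0$ at both ends and is continuous); hence the function $C_\theta\eps^{-2}\exp(-\eps^{-1/3})$ is bounded on $(0,\infty)$ by some $M_\theta$, and one may take $C_2=1$, $C_1=M_\theta$. This elementary estimate is the only ``computation'' in the argument.

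Finally, I would pick $d$ linearly independent directions outside $N$. This is possible because $N$ is a nullset in $S^{d-1}$: choose $\theta_1\in S^{d-1}\sm N$, and having chosen linearly independent $\theta_1,\dots,\theta_j$ with $j<d$, note that the set $\big(\mathrm{span}(\theta_1,\dots,\theta_j)\cap S^{d-1}\big)\cup N$ is still a nullset in $S^{d-1}$, so there is $\theta_{j+1}\in S^{d-1}\sm N$ not lying in $\mathrm{span}(\theta_1,\dots,\theta_j)$. With $\theta_1,\dots,\theta_d$ so chosen, all hypotheses of Theorem~\ref{t:magn} hold: each $R_{\theta_i}\chi_E$ is absolutely continuous (in particular absolutely continuous modulo a nullset), the $\theta_i$ are linearly independent, and the $K$-bound was just verified. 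Hence Theorem~\ref{t:magn} provides measurable test sets $T_1,\dots,T_{d+1}$ reconstructing every set of the form $rE+x$ with $r\ge 1$, $x\in\R^d$; the displayed formulation in the corollary ($\la^d((rE+x)\cap T_i)\neq\la^d((r'E+x')\cap T_i)$ for some $i$ whenever $(x,r)\neq(x',r')$) is merely an unfolding of the definition of reconstruction. I do not expect any obstacle: all the real work has already been carried out in Theorems~\ref{t:sectionsformagn} and~\ref{t:magn}, and ultimately in Theorem~\ref{t2}, Lemma~\ref{l:fourier} and Lemma~\ref{l:weak}.
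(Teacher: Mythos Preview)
Your proposal is correct and follows exactly the paper's approach: the paper simply states that Theorems~\ref{t:magn} and~\ref{t:sectionsformagn} immediately imply the corollary, and you have spelled out the two obvious details (that the polynomial bound $C_\theta\eps^{-2}$ satisfies the exponential-type hypothesis of Theorem~\ref{t:magn}, and that one can choose $d$ linearly independent directions from a set of full spherical measure). There is nothing to add or correct.
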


In the remaining part of this section we show that for convex $E$ the above
result also holds for $d\ge 2$. 

\begin{lemma}\label{brunn}
Let $d\ge 2$ and let $E\subset \R^d$ be a bounded convex set of non-empty interior. 
Then for every direction $\theta$ the function 
$(R_\theta \chi_E)^{1/(d-1)}$ is concave on its support,
where $R_\theta \chi_E$ denotes the 
Radon transform of $\chi_E$ in direction $\theta$.
\end{lemma}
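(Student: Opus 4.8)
The plan is to prove Lemma~\ref{brunn} as a direct consequence of the Brunn--Minkowski inequality applied to the convex body $E$. First I would fix a direction $\theta$; without loss of generality (after a rotation) I may assume $\theta = (1,0,\ldots,0)$, so that $(R_\theta \chi_E)(r) = \la^{d-1}(E_r)$, where $E_r = \{y \in \R^{d-1} : (r,y) \in E\}$ is the slice of $E$ by the hyperplane $\{x_1 = r\}$. The support of $R_\theta \chi_E$ is an interval $[p,q]$ (possibly with one or both endpoints not attained), since $E$ is convex and $E_r \ne \emptyset$ exactly for $r$ in the projection of $E$ onto the first coordinate, which is an interval.

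The key step is convexity of $E$: for $r_1, r_2$ in $(p,q)$ and $t \in [0,1]$, I claim $E_{tr_1 + (1-t)r_2} \supseteq t E_{r_1} + (1-t) E_{r_2}$ (Minkowski combination in $\R^{d-1}$). Indeed, if $y_1 \in E_{r_1}$ and $y_2 \in E_{r_2}$ then $(r_1, y_1), (r_2, y_2) \in E$, so by convexity $t(r_1,y_1) + (1-t)(r_2,y_2) = (tr_1 + (1-t)r_2,\ ty_1 + (1-t)y_2) \in E$, which says exactly that $ty_1 + (1-t)y_2 \in E_{tr_1+(1-t)r_2}$. Now apply the Brunn--Minkowski inequality in $\R^{d-1}$ in the form $\la^{d-1}(A + B)^{1/(d-1)} \ge \la^{d-1}(A)^{1/(d-1)} + \la^{d-1}(B)^{1/(d-1)}$, together with the scaling $\la^{d-1}(tA) = t^{d-1}\la^{d-1}(A)$, to the sets $A = tE_{r_1}$, $B = (1-t)E_{r_2}$. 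This gives
\[
(R_\theta \chi_E)(tr_1 + (1-t)r_2)^{1/(d-1)} \ge t\, (R_\theta \chi_E)(r_1)^{1/(d-1)} + (1-t)\,(R_\theta \chi_E)(r_2)^{1/(d-1)},
\]
which is precisely the asserted concavity on the support.

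A minor point to address is that the slices $E_r$ are bounded convex (hence Lebesgue measurable) subsets of $\R^{d-1}$, so Brunn--Minkowski is legitimately applicable; boundedness of $E$ guarantees finiteness of all the measures involved, and the nonempty interior hypothesis ensures $(R_\theta \chi_E)(r) > 0$ on the interior of $[p,q]$, so the support is a genuine interval on which the concavity statement is meaningful. I expect no real obstacle here: the only thing to be slightly careful about is the behaviour at the endpoints $p$ and $q$ of the support, but concavity on the open interval $(p,q)$ extends to the closed support by the usual semicontinuity argument (a concave function on an open interval has a concave extension to the closure), and in any case the statement ``concave on its support'' is naturally read on the interior. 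So the entire proof is: reduce to coordinate slices, observe the Minkowski-superadditivity of slices from convexity of $E$, and invoke Brunn--Minkowski.
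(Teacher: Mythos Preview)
Your proof is correct and follows essentially the same approach as the paper: both establish the inclusion $tE_{r_1}+(1-t)E_{r_2}\subseteq E_{tr_1+(1-t)r_2}$ directly from convexity of $E$ and then apply the Brunn--Minkowski inequality in $\R^{d-1}$ to obtain the concavity of $(R_\theta\chi_E)^{1/(d-1)}$. Your additional remarks on measurability and endpoint behaviour are fine but not needed for the argument.
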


\begin{proof}
Let $\theta$ be an arbitrary direction. 
Let $E_x = \{a\in E : \sk{a,\theta}=x\}$ for $x\in\R$. By convexity of $E$ we have
\beq\label{convexity}
(1-t) E_x + t E_y \subset E_{(1-t)x+ty},
\eeq
where $0\le t\le 1$.
Applying the Brunn--Minkowski inequality for $(d-1)$-dimensional convex bodies gives
$$\lambda^{d-1}((1-t) E_x + t E_y)^{1/(d-1)} \ge (1-t)\lambda^{d-1}(E_x)^{1/(d-1)}  +  t\lambda^{d-1}(E_y)^{1/(d-1)},$$
supposing that both $E_x$ and $E_y$ are non-empty.
Combining this with \eqref{convexity} gives
$$\lambda^{d-1}(E_{(1-t)x+ty})^{1/(d-1)} \ge (1-t)\lambda^{d-1}(E_x)^{1/(d-1)}  +  t\lambda^{d-1}(E_y)^{1/(d-1)}.$$
That is, $(R_\theta \chi_E)((1-t)x+ty )^{1/(d-1)} \ge (1-t) (R_\theta \chi_E)(x )^{1/(d-1)} + t (R_\theta \chi_E)(y)^{1/(d-1)}$ whenever $x$ and $y$ are in the support of $R_\theta \chi_E$.
\end{proof}

\begin{lemma}\label{konvex1irany}
Let $d\ge 2$ and let $E\subset \R^d$ be a bounded convex set of non-empty interior. Let $x,y\in \overline{E}$ have maximal distance among all pairs of points 
of the closure $\overline{E}$ of $E$, and let $\theta$ be the direction of $xy$. Then $R_\theta \chi_E$ is absolutely continuous and satisfies $K(\eps, (R_\theta \chi_E)') = O(1/\eps^{1/(d-1)})$.
\end{lemma}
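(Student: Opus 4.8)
The plan is to exploit the fact that $\theta$ is the direction realizing the diameter of $\overline{E}$, so that the support of $g:=R_\theta\chi_E$ is a bounded interval, say (after translating) $[0,L]$ where $L=|xy|$ is the diameter, and — crucially — $g$ vanishes at both endpoints. First I would record that by Lemma~\ref{brunn} the function $\phi:=g^{1/(d-1)}$ is concave on $[0,L]$, non-negative, and vanishes at $0$ and $L$ (it vanishes at the endpoints because any section strictly between the two diametral points has positive $(d-1)$-measure while the sections through $x$ and $y$ are single points; more carefully, if $g$ were bounded away from $0$ near an endpoint then one could find two points of $\overline E$ at distance $>L$, contradicting maximality). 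A concave non-negative function on $[0,L]$ vanishing at both endpoints is automatically unimodal: it increases up to some $c\in[0,L]$ and decreases afterwards, and it is Lipschitz on every compact subinterval of $(0,L)$; in particular $g=\phi^{d-1}$ is absolutely continuous on $[0,L]$ (it is a composition of a Lipschitz-on-compacts concave function with the smooth map $t\mapsto t^{d-1}$, and near the endpoints $g$ is monotone and bounded, so no mass escapes), which gives the first assertion. Moreover $g$ is itself unimodal: increasing on $[0,c]$ and decreasing on $[c,L]$.

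The heart of the matter is the estimate $K(\eps,g')=O(\eps^{-1/(d-1)})$. Here the obstacle is that $g'$ need not be of bounded variation — concavity of $\phi$ controls $\phi'$ but passing to $g=\phi^{d-1}$ the derivative $g'=(d-1)\phi^{d-2}\phi'$ can blow up near the endpoints (when $d=2$ this is just $g=\phi$ and $g'=\phi'$ is monotone on each of $[0,c]$ and $[c,L]$, so already $\Var(g')<\infty$ and $K(\eps,g')=O(1)$, consistent with the claim). The strategy for general $d$ is: split $[0,L]$ at the mode $c$; on each piece $g$ is monotone, so it suffices to bound the variation of a good compactly supported approximation of $g'$. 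I would truncate: fix a small parameter $\de>0$ and replace $g'$ by a function $g'_\de$ which agrees with $g'$ on $[\de,L-\de]$ and is, say, $0$ on $[0,\de/2]\cup[L-\de/2,L]$, interpolated appropriately — or, more cleanly, approximate $g$ in $L^1$ by the function $g_\de$ obtained by ``cutting the corners'': set $g_\de$ to be the concave-power function that is linear (in the appropriate sense) near the endpoints so that it becomes Lipschitz globally. Concretely, since near $0$ we have $\phi(t)\ge\phi'(0^+)\,t\cdot(1-o(1))$ by concavity while $\phi(t)\le\|\phi'\|_{L^\infty[\de,L-\de]}$-type bounds fail — instead I use that $\phi$ concave vanishing at $0$ satisfies $\phi(t)\le (\phi(c)/c)\,t$ for $t\le c$, hence $g(t)=\phi(t)^{d-1}\le C t^{d-1}$ near $0$; therefore the $L^1$ mass of $g$ on $[0,\de]$ is $O(\de^{d})$ and the pointwise values are $O(\de^{d-1})$.

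With these comparisons in hand I would carry out the count: on $[\de,L-\de]$ the derivative $g'=(d-1)\phi^{d-2}\phi'$ is a product of two functions each of which is monotone on each of the two pieces $[\de,c]$, $[c,L-\de]$ ($\phi$ is monotone there and $\phi'$ is monotone by concavity), hence $g'$ has bounded variation there with $\Var_{[\de,L-\de]}(g')\le C\,g(c)^{(d-2)/(d-1)}\cdot\|\phi'\|_{\infty,[\de,c]}\le C'/\de$ — the worst factor being $|\phi'|\le \phi(c)/\de$ near $t=\de$ from concavity. Extending $g|_{[\de,L-\de]}$ to a compactly supported function of comparable variation (adding the two monotone ``tails'' $g|_{[0,\de]}$ and $g|_{[L-\de,L]}$, whose variation is at most $2\sup_{[0,\de]}g = O(\de^{d-1})=O(1)$) gives a compactly supported function $h_\de$ with $\Var(h_\de)=O(1/\de)$ and $\|g'-h_\de'\|_{L^1}\le \int_0^\de|g'|+\int_{L-\de}^L|g'|+(\text{variation of the added tails as }L^1\text{ error})$. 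Since $g$ is monotone and $O(\de^{d-1})$ near each endpoint, $\int_0^\de |g'|=g(\de)-g(0)=O(\de^{d-1})$, so $\|g'-h_\de'\|_1=O(\de^{d-1})$. Setting $\eps=c\de^{d-1}$, i.e. $\de=c'\eps^{1/(d-1)}$, we get $K(\eps,g')\le \Var(h_\de)=O(1/\de)=O(\eps^{-1/(d-1)})$, as claimed. The main obstacle I anticipate is making the endpoint comparison $g(t)\asymp\phi(t)^{d-1}$ with $\phi$ concave genuinely yield both the $L^1$-tail bound $O(\de^{d-1})$ and the variation bound $O(1/\de)$ simultaneously with matching exponents; this is exactly where concavity (one-sided Lipschitz control of $\phi$ away from the endpoints, bounded by $\phi(c)/\de$) is used, and one must be slightly careful that $\phi'$ could a priori be unbounded only as $t\to 0^+$ or $t\to L^-$, which is precisely the region we cut off.
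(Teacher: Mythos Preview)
Your overall truncation strategy (cut off $[0,\de]\cup[L-\de,L]$, bound $\|g'-h\|_1$ by the endpoint values of $g$, bound $\Var(h)$ via the product rule applied to $g'=(d-1)\phi^{d-2}\phi'$) is exactly what the paper does. But there is a genuine gap in the endpoint estimate, and it propagates.

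You claim $\phi(t)\le(\phi(c)/c)\,t$ for $t\le c$ ``by concavity''. This inequality is the wrong way round: a concave function with $\phi(0)=0$ lies \emph{above} its chords, so $\phi(t)\ge(\phi(c)/c)\,t$. Your conclusion $g(t)\le Ct^{d-1}$ is therefore unjustified, and in fact false: for the unit ball, $g(t)=c_d\,(t(L-t))^{(d-1)/2}\sim t^{(d-1)/2}$ near $0$. Concavity of $\phi$ by itself gives no upper bound on $\phi$ near $0$ (the one-sided derivative $\phi'(0^+)$ can be $+\infty$), so something beyond Lemma~\ref{brunn} is needed.

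What you are missing is the one place where the \emph{diameter} hypothesis is used quantitatively. Since $|x-y|=L$ is the diameter of $\overline E$, we have $E\subset B(x,L)\cap B(y,L)$, and comparing sections gives $g(t)\le C\,t^{(d-1)/2}$ (and symmetrically at the other end). This is inequality~\eqref{i1} in the paper. Feeding this into $\phi'(t)\le\phi(t)/t$ (which \emph{is} a correct consequence of concavity and $\phi(0)=0$) yields $|\phi'(t)|\le C'/\sqrt{t}$, hence the variation bound improves from your $O(1/\de)$ to $O(1/\sqrt{\de})$. The matched pair $\|g'-h\|_1=O(\de^{(d-1)/2})$ and $\Var(h)=O(\de^{-1/2})$ then gives $K(\eps,g')=O(\eps^{-1/(d-1)})$. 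Note that if you keep your cruder variation bound $O(1/\de)$ but use the correct $L^1$ error $O(\de^{(d-1)/2})$, you only obtain $K(\eps,g')=O(\eps^{-2/(d-1)})$, which is too weak; so both estimates must be sharpened together, and both rely on the ball containment coming from the diameter direction.
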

\begin{proof}
We may suppose without loss of generality that the distance of $x$ and $y$ is $1$, and the support of the Radon transform $f = R_\theta \chi_E$ is $[0,1]$. 
The set $E$ is contained in the balls of unit radius centered at $x$ and
$y$. This implies that 
\beq\label{i1}
f(t)\le C t^{(d-1)/2} \text{ and } f(1-t) \le C t^{(d-1)/2} \qquad (0\le t\le 1).
\eeq

Lemma~\ref{brunn} implies that $g=f^{1/(d-1)}$ is concave on its support. 
Let $g'$ denote the everywhere existing right-hand derivative of $g$, which is also a weak derivative of $g$.
Concavity and $g(0)=g(1)=0$ imply that
\beq\label{i2}
g'(t)\le \frac{g(t)}{t} \text{ and }g'(1-t)\ge -g(1-t)/t \qquad (0<t\le 1).
\eeq
If we combine this with \eqref{i1} we obtain
\beq\label{i2.5}
|g'(t)| \le \frac{C'}{\sqrt{t}} \text{ \ and \ } |g'(1-t)| \le \frac{C'}{\sqrt{t}} \qquad (0<t<1).
\eeq

The formula 
$$f'(t) = (g^{d-1})'(t) = (d-1) g^{d-2}(t) g'(t) \qquad (0\le t\le 1)$$
implies that $f$ has an everywhere existing right-hand derivative, we denote it by $f'$. Clearly $f'$ is also a weak derivative of $f$.
Thus by \eqref{i2},
\beq\label{i4}
f'(t) \le (d-1) g^{d-2}(t) \frac{g(t)}{t} \le (d-1) \frac{f(t)}{t} \qquad (0<t\le 1).
\eeq

Let us fix a small $\eps>0$. Let $h:\R \to \R$ be defined as $h(x)=f'(x)$ if $\eps\le x \le 1-\eps$, and $h(x)=0$ otherwise. 
The function $g$ is concave, nonnegative, $g(0)=g(1)=0$, so
$g$ is monotone in $[0,\eps]$ and in $[1-\eps,1]$ if $\eps$ is small enough. 
Hence $f=g^{d-2}$ is also monotone in the same intervals, thus 
$$\int_0^\eps |f'(t)| + \int_{1-\eps}^1 |f'(t)| = f(\eps) + f(1-\eps).$$
Using this and \eqref{i1} we obtain
\beq\label{i5}
\norm{f'-h}_1 =  f(\eps) + f(1-\eps) \le 2C \eps^{(d-1)/2}.
\eeq


%
We have to give an upper bound for $\Var(h)$. We will use the inequality
$$\Var(f_1 f_2)\le \Var(f_1)\sup |f_2| + \Var(f_2)\sup |f_1|.$$
Writing $m$ for $\max_{[0,1]} g$,
\begin{align}
Var_{[\eps,1-\eps]}(f') & = Var_{[\eps,1-\eps]}((d-1) g^{d-2} g') \nonumber \\
 & \le (d-1) \left( Var_{[\eps,1-\eps]}(g') m^{d-2} 
+  Var_{[\eps,1-\eps]}(g^{d-2})  \max_{[\eps,1-\eps]}|g'| \right) \nonumber \\
& \le (d-1) \left( Var_{[\eps,1-\eps]}(g') m^{d-2} 
+  2m^{d-2} \max_{[\eps,1-\eps]}|g'| \right) \label{i6}
\end{align}
where $Var_{[0,1]}(g^{d-2}) = 2m^{d-2}$ (if $d\ge 3$) follows from $g$ being concave. Note that \eqref{i6} holds for $d=2$ as well.
As $g'$ is nonincreasing, 
$Var_{[\eps,1-\eps]}(g') = |g'(1-\eps)-g'(\eps)|=|g'(\eps)| + |g'(1-\eps)|$
and $\max_{[\eps,1-\eps]}|g'|=\max(|g'(\eps)|,|g'(1-\eps)|)$. 
Therefore \eqref{i6} and \eqref{i2.5} implies
\begin{align*}
Var_{[\eps,1-\eps]}(f') & \le  (d-1) 4C' m^{d-2}/\sqrt{\eps}.
\end{align*}
Thus
\begin{align}
\Var(h) & = h(\eps)+ h(1-\eps) + Var_{[\eps,1-\eps]}(h) \nonumber\\
& = |f'(\eps)| + |f'(1-\eps)| + Var_{[\eps, 1-\eps]}(f') \nonumber \\
& \le  |f'(\eps)| + |f'(1-\eps)| + (d-1) 4C' m^{d-2}/\sqrt{\eps}.
\label{i6.6}
\end{align}
Using \eqref{i4}, \eqref{i1} and $d\ge 2$ we get that
both $|f'(\eps)|$ and $|f'(1-\eps)|$ are at most 
$(d-1)\eps^{(d-3)/2} \le (d-1)/\sqrt{\eps}$.

Using this, \eqref{i6.6} and \eqref{i1} we obtain
\begin{align}\label{i7}
\Var(h) & \le |f'(\eps)| + |f'(1-\eps)| + (d-1) 4C' m^{d-2}/\sqrt{\eps} 
\le C'' /\sqrt{\eps},
\end{align}
where $C''$ depends on $m$, but not on $\eps$.
Combining \eqref{i5} and \eqref{i7} 
and setting $\delta =  2C \eps^{(d-1)/2}$
give that $K(\delta, f') \le C''' \delta^{-1/(d-1)}$ if $\delta>0$ is small enough.
\end{proof}

\begin{theorem}\label{konvexsuru}
Let $d\ge 2$ and let $E\subset \R^d$ be a bounded convex set of non-empty interior. There exist a dense set of directions $\theta$ for which the Radon transforms $R_\theta \chi_E$ are absolutely continuous and satisfy $K(\eps, (R_\theta \chi_E)')=O(1/\eps^{1/(d-1)})$.
\end{theorem}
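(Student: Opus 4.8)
The plan is to deduce Theorem~\ref{konvexsuru} from Lemma~\ref{konvex1irany} by exploiting that the two properties in the statement (absolute continuity and the bound $K(\eps,(R_\theta\chi_E)')=O(\eps^{-1/(d-1)})$) are invariant under invertible linear maps. Call a direction $\psi\in S^{d-1}$ \emph{good} if $R_\psi\chi_E$ has these two properties. First I would record how the Radon transform behaves under an invertible linear map $L$: with $v=L^{T}\theta$ and $\psi=v/|v|$ one has $L^{-1}(\{x:\sk{x,\theta}=r\})=\{y:\sk{y,\psi}=r/|v|\}$, and $L$ carries this hyperplane affinely onto $\{x:\sk{x,\theta}=r\}$, multiplying $(d-1)$-dimensional Lebesgue measure by a constant $J=J(L,\theta)>0$ that is independent of $r$; hence
\[
R_\theta\chi_{L(E)}(r)=J\cdot R_\psi\chi_E\!\left(\frac{r}{|v|}\right).
\]
Combining this with the scaling identity $K(\eps,(c\,h(x/a))')=\tfrac{c}{a}\,K(\eps/c,h')$ (recorded in the proof of Theorem~\ref{t2}) shows that $\psi$ is good for $E$ if and only if $R_\theta\chi_{L(E)}$ is absolutely continuous and satisfies $K(\eps,(R_\theta\chi_{L(E)})')=O(\eps^{-1/(d-1)})$. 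Since $L(E)$ is again a bounded convex set of non-empty interior, Lemma~\ref{konvex1irany} applied to $L(E)$ yields: if $\theta$ realises the diameter of $L(E)$, then $L^{T}\theta/|L^{T}\theta|$ is good for $E$.

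It then remains to check that such directions form a dense subset of $S^{d-1}$. Fix $\psi_0\in S^{d-1}$, and for $t\ge 1$ let $L_t$ be the self-adjoint linear map that multiplies by $t$ along $\psi_0$ and is the identity on $\psi_0^{\perp}$; let $\Pi$ be the orthogonal projection onto $\psi_0^{\perp}$. Choose $p_t,q_t\in\overline E$ with $L_tp_t,L_tq_t$ a diameter pair of $L_t(\overline E)=\overline{L_t(E)}$. Since $E$ has non-empty interior, its orthogonal projection onto the line through $\psi_0$ is a nondegenerate segment, of length $w>0$, say. Using $|L_t(p-q)|^2=t^2\sk{p-q,\psi_0}^2+|\Pi(p-q)|^2$, the bound $|\Pi(p-q)|\le\diam(\overline E)$, and a comparison of the diameter pair with a pair realising the extent $w$ in direction $\psi_0$, I would deduce that $\sk{p_t-q_t,\psi_0}^2\to w^2$ as $t\to\infty$ while $|\Pi(p_t-q_t)|$ stays bounded. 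After an appropriate choice of signs this forces $\theta_t:=L_t(p_t-q_t)/|L_t(p_t-q_t)|\to\psi_0$, and hence $\psi_t:=L_t^{T}\theta_t/|L_t^{T}\theta_t|=L_t\theta_t/|L_t\theta_t|\to\psi_0$ as well. By the first paragraph each $\psi_t$ is good for $E$, so good directions accumulate at the arbitrary point $\psi_0$, which proves density.

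Given Lemma~\ref{konvex1irany}, none of this is deep: the reduction is just bookkeeping of how the Radon transform and the quantity $K$ transform under linear maps, and the only step that needs a careful (though elementary) argument is the limit in the second paragraph — that stretching a convex body more and more along $\psi_0$ drives its diameter direction to $\psi_0$, and that this is preserved when one pulls back along $L_t^{T}$.
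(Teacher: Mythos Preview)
Your argument is correct and follows essentially the same route as the paper: apply Lemma~\ref{konvex1irany} to a linearly distorted copy of $E$ (you stretch by $t$ along $\psi_0$, the paper shrinks by $\delta$ in the orthogonal directions---these differ only by a global dilation), then pull the resulting good direction back via the transformation law $R_\theta\chi_{L(E)}(r)=J\,R_{L^T\theta/|L^T\theta|}\chi_E(r/|L^T\theta|)$ and the scaling identity for $K$. Your verification that $\theta_t\to\psi_0$ and then $L_t\theta_t/|L_t\theta_t|\to\psi_0$ is exactly the content of the paper's remark that the diameter direction of $E_\delta$ is $C'\delta$-close to vertical and its $\Phi$-image is $C''\delta^2$-close.
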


\begin{proof}
We will find an appropriate $\theta$ arbitrarily close to the vertical direction. This will prove the theorem. Let $\delta>0$ be small. Let $\Phi$ be the linear transformation which maps $(x_1, \ldots, x_d)$ to $(\delta x_1, \ldots, \delta x_{d-1}, x_d)$. (We call the $x_d$ coordinate direction vertical.) Let $E_\delta = \Phi(E)$.

Suppose that the projection of $E$ to the vertical axis has diameter $1$. 
Let $x,y\in \overline{E_\delta}$ be points which have maximal distance among all pairs of points of $\overline{E_\delta}$. Their distance is at least $1$. For some constant $C$ (depending on $E$ only), $E_\delta$ is contained in a right circular cylinder in vertical position of radius $C\delta$ and height $1$. This implies that the distance of the direction of $xy$ to the vertical direction is at most $C'\delta$.

Let us apply Lemma~\ref{konvex1irany} to $E_\delta$. We obtain a direction $\theta$ which is $C'\delta$ close to vertical such that the Radon transform $R_\theta \chi_{E_\delta}$ has the right properties.
Consider $E_\delta$ and the hyperplanes which are orthogonal to $\theta$. If we apply $\Phi^{-1}$ to them, we get $E$ and the new hyperplanes will be orthogonal to a direction which is $C''\delta^2$ close to vertical---in fact, they are orthogonal to $\Phi^*(\theta)=\Phi(\theta)$ as $\Phi$ is self-adjoint.
Since $\Phi$ is a linear map, the Radon transform $R_{\Phi(\theta)} \chi_E$ can be obtained from $R_\theta \chi_{E_\delta}$ by an affine transformation, that is, 
$$ (R_{\Phi(\theta)} \chi_E) (x) =c (R_\theta \chi_{E_\delta}) (ax+b)$$
for some $a\neq 0$, $c>0$, $b\in \R$.
Therefore $R_{\Phi(\theta)} \chi_E$ is also absolutely continuous, and satisfies $K(\eps, (R_{\Phi(\theta)} \chi_E)') = O(1/\eps^{1/(d-1)})$.
\end{proof}

By combining Theorems~\ref{t:magn}~and~\ref{konvexsuru} we get the following.

\begin{cor}\label{c:convexmagn}
Let $d\ge 2$ and let $E\subset\R^d$ be a bounded convex set with nonempty
interior. Then a set of the form $rE+x$, where $r\ge 1$ and $x\in\R^d$, can be reconstructed using $d+1$ sets.
\end{cor}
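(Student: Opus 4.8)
The plan is to obtain this as an immediate consequence of Theorem~\ref{t:magn}, whose hypotheses we verify by means of Theorem~\ref{konvexsuru}. Note first that a bounded convex set with nonempty interior has positive Lebesgue measure, so that part of the hypothesis of Theorem~\ref{t:magn} is automatic. By Theorem~\ref{konvexsuru} the set
\[
\Theta=\bigl\{\theta\in S^{d-1}: R_\theta\chi_E \text{ absolutely continuous},\ K(\eps,(R_\theta\chi_E)')=O(\eps^{-1/(d-1)})\bigr\}
\]
is dense in $S^{d-1}$. First I would check that every $\theta\in\Theta$ satisfies the (weaker) growth condition required in Theorem~\ref{t:magn}: since $d\ge 2$ we have $1/(d-1)\le 1$, so for small $\eps$ the bound $K(\eps,(R_\theta\chi_E)')\le C\eps^{-1/(d-1)}\le C\eps^{-1}$ holds, and $\eps^{-1}=o(\exp(\eps^{-1/3}))$ as $\eps\to 0^+$; for $\eps$ bounded away from $0$ the quantity $K(\eps,(R_\theta\chi_E)')$ is bounded (it is non-increasing in $\eps$) while $\exp(C_2\eps^{-1/3})\ge 1$. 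Choosing $C_1,C_2$ large enough therefore yields $K(\eps,(R_\theta\chi_E)')\le C_1\exp(C_2\eps^{-1/3})$ for all $\eps>0$, so every direction in $\Theta$ is admissible for Theorem~\ref{t:magn}.

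Next I would extract $d$ linearly independent directions from $\Theta$. This is routine: if $\theta_1,\dots,\theta_j\in\Theta$ have already been chosen with $j<d$ and span a $j$-dimensional subspace $V$, then $S^{d-1}\setminus V$ is nonempty and open in $S^{d-1}$, hence by density it contains some $\theta_{j+1}\in\Theta$; since $\theta_{j+1}\notin V$, the vectors $\theta_1,\dots,\theta_{j+1}$ are again linearly independent. Iterating gives $\theta_1,\dots,\theta_d\in\Theta$ linearly independent, and Theorem~\ref{t:magn} applied to $E$ with these directions produces $d+1$ test sets reconstructing every set of the form $rE+x$ with $r\ge 1$ and $x\in\R^d$, which is exactly the assertion.

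In this argument there is essentially no remaining obstacle: the genuine analytic work---absolute continuity of the section function $R_\theta\chi_E$ of a convex body, together with the quantitative decay of $K(\eps,(R_\theta\chi_E)')$---is already encapsulated in Lemma~\ref{brunn} and Lemmas~\ref{konvex1irany}, \ref{konvexsuru}, while the reduction of the magnified-reconstruction problem to these one-variable statements is exactly the content of Theorem~\ref{t:magn}. The only points requiring a line of care are the comparison of the polynomial and exponential growth bounds and the elementary density-to-independence step sketched above, neither of which presents any real difficulty.
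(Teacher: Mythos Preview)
Your proposal is correct and follows exactly the paper's approach: the paper simply states that the corollary follows by combining Theorems~\ref{t:magn} and~\ref{konvexsuru}, and you have spelled out the two routine details (polynomial growth in $1/\eps$ implies the exponential bound, and a dense subset of $S^{d-1}$ contains $d$ linearly independent vectors) that make this combination work.
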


Note that by Theorem~\ref{t:interval} the above result fails
for $d=1$.

\section{A general positive result for families with $k$ degrees of freedom}\label{freedom}
In this section we prove that nice geometric objects of $k$ degrees of freedom can be reconstructed using $2k+1$ measurable sets. We also show that this result is sharp.

\begin{notation}
We denote the complete metric space of non-empty compact sets of $\R^d$ 
with the Hausdorff metric by $(\cak(\R^d),d_H)$.
 
In any metric space, let $B(A,\delta)$ denote the open $\delta$-neighborhood 
of the set $A$.

We recall the definition of the upper box dimension (upper Minkowski dimension) and the packing dimension 
in a metric space $X$. The upper box dimension of a bounded set $A\su X$ is
$$
\overline{\dim}_B(A) = \inf\{ s: \limsup_{\eps\to 0} N(A,\eps) \eps^s = 0\},
$$
where $N(A,\eps)$ is the smallest number of $\eps$-balls in $X$ needed
to cover $A$. Recall that in $\R^d$ this is the same as the 
upper Minkowski dimension (see e.g. in \cite{Ma}); that is, 
$$
\overline{\dim}_B(A) = \overline{\dim}_M(A) = 
\inf\{ s: \limsup_{\eps\to 0} \la_d(B(A,\eps))\eps^{s-d} = 0\}.
$$
The packing dimension (or modified upper box dimension in \cite{Fa})
of $A\su X$ is given by 
$$
\dim_P(A) = \inf\left\{\sup_i \overline{\dim}_B(A_i)\ :\ 
A_i \textrm{ is bounded and } A\su \cup_{i=1}^\infty A_i \right\}.
$$
(Alternatively, the packing dimension may be defined in terms of the 
radius based packing measures, see \cite{Cu}.)
\end{notation}

\begin{theorem}\label{special case}
Let $\K$ be a collection of compact subsets in $\R^d$. 
Suppose that ${\dim}_P\, \K \le k$, $k\in\{1,2,\ldots\}$ and for every $K\in\K$,
 $K=\overline{\text{int}\,K}$ and $\overline{\dim}_B \partial K = d-1$.
Then an element of  \ $ \K$ can be reconstructed using $2k+1$ test sets.
\end{theorem}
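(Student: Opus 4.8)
The plan is to use a random construction of test sets, following the standard philosophy that a random continuous-family-avoiding map into Euclidean space is injective once the target dimension exceeds twice the source dimension. Concretely, the parameter space here is $\K$ itself, equipped with the Hausdorff metric; the hypotheses $K=\overline{\inter K}$ and $\overline{\dim}_B \partial K = d-1$ are there precisely to guarantee that the ``measurement map'' associated to any fixed test set behaves continuously on $\K$, and — more importantly — that $\la^d(K\triangle K')>0$ whenever $K\neq K'$ in $\K$, so that distinct sets really are distinguishable in principle. The first step is therefore to record these two facts: (i) for a fixed measurable $T$, the map $K\mapsto \la^d(K\cap T)$ is continuous on $(\K,d_H)$ (using $K=\overline{\inter K}$ and $\overline{\dim}_B\partial K<d$ so that $\la^d(\partial K)=0$ and small Hausdorff perturbations change the measure little), and (ii) $K\neq K'$ implies $\la^d(K\triangle K')>0$ (again from $K=\overline{\inter K}$).

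Next I would set up the randomness. Since $\dim_P \K\le k$, write $\K=\bigcup_{n=1}^\infty \K_n$ with each $\K_n$ bounded and $\overline{\dim}_B\K_n\le k$ (with room to spare, i.e.\ $\le k+\tfrac13$, say). It suffices to show that for each fixed pair of indices $(m,n)$ one can, with positive probability, reconstruct within $\K_m\cup\K_n$ using $2k+1$ random test sets; then a countable intersection / diagonal argument over all pairs, or rather a single random choice that works simultaneously (a union bound over countably many measure-zero bad events), produces a deterministic family that reconstructs all of $\K$. So fix a bounded $\iC'\su\K$ with $\overline{\dim}_B\iC'\le k+\tfrac13$. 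The test sets will be of a simple random form, for instance $T_i = \{x : \phi_i(x)\le c_i\}$ for a fixed nice exhausting function $\phi_i$ and a random threshold $c_i$, or half-space-type sets with random normal directions and offsets — anything giving a finite-dimensional random parameter and Lipschitz-in-parameter behaviour of $\la^d(K\cap T_i)$. The key estimate is that for fixed distinct $K,K'\in\iC'$, the ``bad set'' of parameters $(c_1,\ldots,c_{2k+1})$ for which $\la^d(K\cap T_i)=\la^d(K'\cap T_i)$ for all $i$ has measure zero, and in fact has controlled small measure as a function of $d_H(K,K')$ and $\la^d(K\triangle K')$; then a Fubini/covering argument over an $\eps$-net of $\iC'\times\iC'$, using $\overline{\dim}_B(\iC'\times\iC')\le 2k+\tfrac23 < 2k+1$, shows the total measure of parameters failing injectivity tends to $0$, hence there is a good choice.

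The main obstacle — and where the real work lies — is this last quantitative step: getting a good enough lower bound on how ``spread out'' the single measurement $c\mapsto \la^d(K\cap T(c))$ is, uniformly over $K$, so that for a single random threshold the probability that two given sets collide is bounded by something like $C\,\omega(d_H(K,K'))$ for a modulus $\omega$, and then checking that when this is raised to the power $2k+1$ (independence across the $2k+1$ coordinates) it beats the $\eps^{-(2k+2/3)}$ covering number of $\iC'\times\iC'$ as $\eps\to 0$. This is exactly the mechanism by which $2k+1$, and not $2k$, appears. I would handle the uniformity by exploiting $\overline{\dim}_B\partial K=d-1$: this forces $\la^d(B(\partial K,\delta))=O(\delta^{1-\eta})$-type control is \emph{not} available in general, so instead I would argue more crudely via the monotonicity and Lipschitz properties of $c\mapsto\la^d(K\cap T(c))$ together with $\la^d(K\triangle K')>0$, splitting into the case $d_H(K,K')$ large (handled by continuity + compactness giving a uniform positive collision-avoidance) and $d_H(K,K')$ small (handled by the quantitative estimate). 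Once the probabilistic estimate closes, the theorem follows; optimality is already witnessed by Example~\ref{e:2k}.
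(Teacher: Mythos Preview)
Your overall architecture---random test sets, an $\eps$-net covering of $\iC'\times\iC'$, and the dimension count $2k+1>2(k+\tfrac13)$---matches the paper's. The gap is exactly at the step you yourself flag as the main obstacle: the concrete random families you propose (sublevel sets $\{\phi_i\le c_i\}$ or half-spaces with random offset) do not deliver the needed quantitative collision bound. For such a one-parameter family the measurement $c\mapsto\la^d(K\cap T(c))$ is a single Lipschitz function of $c$, and for two sets $K,K'$ there is no lower bound on $|(f_K-f_{K'})'|$; the set of $c$ with $|f_K(c)-f_{K'}(c)|\le\delta$ can have measure of order $1$ regardless of $\delta$ (two congruent balls with the half-space normal perpendicular to the line of centres give $f_K\equiv f_{K'}$). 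The ``$d_H$ large vs.\ small'' dichotomy does not rescue this: it is precisely for small $d_H$, where the net argument must apply, that the bound is unavailable. The stated form of the bound, collision probability $\le C\,\omega(d_H(K,K'))$, is also not what such test sets give you---close sets produce close measurements, and without derivative control the measure of near-coincidence parameters does not shrink with $d_H$.

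The paper's construction supplies the missing idea and is quite different from anything you propose. The test set is $A=A_1\triangle A_2\triangle\cdots$, where at level $i$ one has a coarse grid of cubes $D$ of side $1/\log_2 n_i$ and, independently in each $D$, selects a \emph{uniformly random integer} number $m_D\in\{0,\dots,p_i(n_i/\log_2 n_i)^d\}$ of fine cubes of side $1/n_i$. The discrete uniform randomness is the whole point: conditioned on everything outside $D$, the quantity $\la^d(A\cap D)$ takes values on a mesh of spacing $1/n_i^d$, each with probability at most $(\log_2 n_i)^d/(p_i n_i^d)$. So if $K\setminus K'$ contains a full coarse cube $D$, the measurement difference $\la^d(A\cap K)-\la^d(A\cap K')$ lands in any prescribed window of width $1/n_i^d$ with probability at most $(\log_2 n_i)^d/(p_i n_i^d)$---no analytic transversality required. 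Matching the net scale $1/k_i$ to $n_i^{-d}$ (forced by $b=d-1$ through the estimate $\la^d(K\triangle L)\le 2m^{b-d}$), the covering argument and Borel--Cantelli then close as you outline, producing the exponent $2k+1$. Note also that the continuity/Lipschitz control you invoke in (i) is \emph{not} uniform over $\K$ from $\overline{\dim}_B\partial K=d-1$ alone; the paper stratifies $\K$ into pieces $\K_i$ on which the box-dimension bound holds uniformly up to level $i$ and runs the argument level by level.
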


\begin{remark}
Example~\ref{e:2k} shows that this theorem is sharp in the sense that
$2k+1$ cannot be replaced by $2k$.
\end{remark}

Before proving the theorem we state some corollaries. 
In applications, the condition ${\dim}_P\, \K \le k$
is guaranteed by obtaining $\K$ as a $k$-parameter family of compact subsets
of $\R^d$. More precisely, $\K$ will always be covered by finitely 
many sets of the form $f(G)$, where $G\su\R^k$ is open and 
$f:G\to\cak(\R^d)$ is Lipschitz. This clearly implies ${\dim}_P\, \K \le k$.
Using this observation one can immediately apply Theorem~\ref{special case} 
for any natural collection
of geometric objects with finitely many parameters by counting the 
number of parameters. 
We illustrate this by the following list of applications.
The reader can easily extend this list.

\begin{cor}\label{c:concrete}
\begin{enumerate}
\item \label{5enough}
      An interval in $\R$ can be reconstructed using $5$ test sets.
\item \label{ball}
      A ball in $\R^d$ can be reconstructed using $2d+3$ test sets.
\item An $n$-gon in $\R^2$ can be reconstructed using $4n+1$ test sets.
\item An axis-parallel rectangle in $\R^2$ can be reconstructed using 
$9$ test sets.
\item An ellipsoid in $\R^3$ can be reconstructed using $19$ test sets.
\item A simplex in $\R^d$ can be reconstructed using $2d^2+2d+1$ test sets.
\end{enumerate}
\end{cor}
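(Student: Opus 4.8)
The plan is to derive all six items as routine instances of Theorem~\ref{special case}, whose hypotheses require only that $\K$ be a collection of compact sets with $\dim_P \K \le k$, and that each $K\in\K$ satisfy $K = \overline{\inter K}$ and $\overline{\dim}_B \partial K = d-1$. Since each of the listed families consists of compact convex (or piecewise-linear) bodies with nonempty interior, the conditions $K = \overline{\inter K}$ and $\overline{\dim}_B \partial K = d-1$ are immediate: the boundary of a convex body with nonempty interior in $\R^d$, and the boundary of a polytope, are $(d-1)$-rectifiable of positive finite $\mathcal{H}^{d-1}$-measure, hence have upper box dimension exactly $d-1$. So the only thing that needs checking for each item is that the family can be written as a finite union of Lipschitz images of open subsets of $\R^k$ for the claimed value of $k$, giving $\dim_P \K \le k$ and hence reconstruction with $2k+1$ test sets.

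I would then go through the list counting parameters. For (1) an interval $[a,b]\subset\R$ is determined by $(a,b)$ with $a<b$, so $k=2$ and $2k+1=5$; one checks that $(a,b)\mapsto[a,b]$ is Lipschitz into $(\cak(\R),d_H)$. For (2) a ball $B(x,r)\subset\R^d$ is determined by $(x,r)\in\R^d\times(0,\infty)$, so $k=d+1$ and $2k+1=2d+3$. For (3) an $n$-gon in $\R^2$ is determined (up to a bounded-to-one, locally Lipschitz parametrization, covered by finitely many charts on which the vertex order is fixed) by its $n$ vertices, i.e.\ $2n$ real parameters, so $k=2n$ and $2k+1=4n+1$; here one notes that the map from vertices to the convex hull (or polygonal region) is Lipschitz in the Hausdorff metric. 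For (4) an axis-parallel rectangle is given by two opposite corners, $k=4$, $2k+1=9$. For (5) an ellipsoid in $\R^3$ is determined by its center ($3$ parameters) and a positive-definite symmetric $3\times 3$ matrix ($6$ parameters), so $k=9$ and $2k+1=19$; the map from (center, matrix) to the solid ellipsoid is locally Lipschitz. For (6) a simplex in $\R^d$ has $d+1$ vertices, hence $d(d+1)$ parameters, so $k=d(d+1)$ and $2k+1 = 2d^2+2d+1$.

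The only mild subtlety, and the step I would be most careful about, is the Lipschitz (or locally Lipschitz, on finitely many charts) dependence of the compact set on its parameters in the Hausdorff metric, together with the compactness needed to invoke the definition of $\dim_P$ via finitely many bounded pieces. For balls and axis-parallel rectangles this is elementary. For polygons, ellipsoids and simplices one restricts to parameters lying in a bounded region (e.g.\ bodies contained in a fixed large ball, which suffices since reconstruction is a property one can verify family by family after such a truncation, or one simply covers the full parameter space by countably many bounded charts, using that packing dimension is countably stable). On each such bounded chart the relevant map — vertices to convex hull, (center, positive-definite matrix) to ellipsoid — is Lipschitz into $(\cak(\R^d), d_H)$ by a direct estimate, so $f(G)$ has the same upper box dimension bound $\le k$ as $G$, and the countable union still has packing dimension $\le k$. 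Then Theorem~\ref{special case} applies with this $k$ and yields reconstruction using $2k+1$ test sets, which is exactly the claimed count in each case. $\square$
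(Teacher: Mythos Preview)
Your proposal is correct and follows essentially the same route as the paper: the paper also derives the corollary directly from Theorem~\ref{special case} by noting that each family is covered by Lipschitz images of open subsets of $\R^k$ (so $\dim_P \K \le k$) and then simply counting parameters. Your additional care about verifying the Lipschitz dependence and the boundary-dimension hypothesis is appropriate but not elaborated in the paper, which treats these as routine.
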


Instead of Theorem~\ref{special case} we prove the following even more general
statement.

\begin{theorem}\label{randomkonstr}
Let $\K\subset \cak(\R^d)$ be such that ${\dim}_P\, \K <\infty $. Suppose that $K=\overline{\text{int}\,K}$ and that $\overline{\dim}_B \partial K \le b <d$ for every $K\in\K$. 
Then an element of $\K$ can be reconstructed using $r=\left\lfloor \frac{2 \,{\dim}_P \K}{d-b}\right\rfloor + 1$ test sets. 
\end{theorem}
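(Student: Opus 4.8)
The plan is to construct the test sets $T_1,\ldots,T_r$ by a random procedure and show that, with positive probability, they simultaneously reconstruct every element of $\K$. The natural choice is to take each $T_i$ to be a random union of small cubes: fix a fine dyadic grid of mesh $\eps$ in a large box containing all of $\K$, and for each grid cube let it belong to $T_i$ independently with probability $1/2$ (a different independent coin for each $i$). The role of the exponent $r=\lfloor 2\dim_P\K/(d-b)\rfloor+1$ will emerge from a union bound: fixing $\eps$, the number of distinct ``$\eps$-cube patterns'' of the boundaries $\partial K$, $K\in\K$, is controlled by $\overline{\dim}_B\partial K\le b$, hence is roughly $\eps^{-b}$ at scale $\eps$, and covering $\K$ by $\eps^{-\dim_P\K}$ pieces (using the definition of packing dimension, plus countable stability) we need the probability that a \emph{fixed} pair of distinct sets fails to be separated to beat $\eps^{2\dim_P\K}$.

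The key quantitative step is the following separation estimate for a single pair. If $K\neq K'$ in $\K$, then since $K=\overline{\inte K}$ and likewise for $K'$, the symmetric difference $K\triangle K'$ has nonempty interior, so it contains a grid cube $Q$ once $\eps$ is small enough. For a single random $T_i$, the quantity $\la^d(K\cap T_i)-\la^d(K'\cap T_i)$ is a sum of independent contributions over grid cubes: cubes inside $K\cap K'$ or outside both contribute $0$; cubes meeting $\partial K\cup\partial K'$ contribute a bounded ``noise'' term; and cubes like $Q$ that lie in the interior of $K\triangle K'$ contribute $\pm\eps^d$ depending on the coin. The number of noise cubes is $O(\eps^{-b-o(1)})\cdot\eps^d = o(\eps^{d})\cdot\eps^{-b}$ ... more precisely their total measure is $O(\eps^{d-b-o(1)})$, which is small compared to the ``signal'' granularity $\eps^d$ only after we also randomize; the clean way is a small-ball / anticoncentration estimate: conditioning on everything except the coins on the interior cubes of $K\triangle K'$, the difference $\la^d(K\cap T_i)-\la^d(K'\cap T_i)$ is a shifted sum of independent $\pm\eps^d$ terms, so $\Pr[\la^d(K\cap T_i)=\la^d(K'\cap T_i)]\le C\eps^{(d-b)/2 - o(1)}$ by the Erd\H os--Littlewood--Offord lemma (the number of interior cubes of $K\triangle K'$ is at least a constant times $\eps^{-d}$ minus $O(\eps^{-b})$ boundary cubes... actually one only needs that it is $\gtrsim\eps^{-(d-b)}$, which follows since $K\triangle K'\supseteq Q\setminus(\partial K\cup\partial K')$ and a fixed cube has $\gtrsim\eps^{-d}$ subcubes while the boundaries contribute only $O(\eps^{-b})$). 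Raising to the $r$-th power (independence of the $T_i$) gives failure probability $\le C\eps^{r(d-b)/2-o(1)}$ for that pair, and by the choice of $r$ this exponent exceeds $\dim_P\K$, in fact exceeds it by a fixed positive amount.

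From here the argument is a routine Borel--Cantelli / union-bound closeout. Cover $\K=\bigcup_j\K_j$ with $\overline{\dim}_B\K_j<\dim_P\K+\eta$ for small $\eta$; within each $\K_j$, an $\eps$-net in the Hausdorff metric has $\le\eps^{-\dim_P\K-\eta}$ elements, and two members of $\K_j$ that are Hausdorff-$\eps$-close have the ``same'' $\eps$-cube approximations up to an $O(\eps^{d-b-o(1)})$-measure error, so it suffices to separate pairs of net elements; there are $\le\eps^{-2(\dim_P\K+\eta)}$ such pairs. Summing the per-pair failure probability $C\eps^{r(d-b)/2-o(1)}$ over these pairs and over a sequence $\eps=\eps_m\to0$ (say $\eps_m=2^{-m}$) yields a convergent series provided $r(d-b)/2>2\dim_P\K$... here I must be careful: the exponent budget is $r(d-b)/2$ against $2\dim_P\K$ from the pair count, so I should instead define $r=\lfloor 4\dim_P\K/(d-b)\rfloor+1$ — but the paper claims $\lfloor 2\dim_P\K/(d-b)\rfloor+1$, so the correct accounting must only charge $\dim_P\K$ (not $2\dim_P\K$) for the pair count, which is achieved by separating each net element from \emph{all} others by a single ``signal cube'' argument rather than pair-by-pair, or by noting the diagonal structure lets one fix the first set (paying $\eps^{-\dim_P\K}$) and absorb the second into a continuity argument.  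Reconciling the exponent bookkeeping with the stated $r$ is the delicate point, and I expect the main obstacle to be precisely this: setting up the anticoncentration estimate so that the per-net-point (not per-pair) failure probability decays faster than $\eps^{\dim_P\K}$, using $\overline{\dim}_B\partial K\le b$ to bound the ``bad'' cubes and the Littlewood--Offord bound on the $\gtrsim\eps^{-(d-b)}$ good cubes, and then taking $r$ large enough that $r(d-b)/2>\dim_P\K$, i.e. $r>2\dim_P\K/(d-b)$, which gives exactly $r=\lfloor 2\dim_P\K/(d-b)\rfloor+1$.
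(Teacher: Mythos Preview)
Your overall architecture (random test sets, anticoncentration for a fixed pair, $\eps$-net in $\K$, union bound and Borel--Cantelli over scales) matches the paper. The gap is in the anticoncentration step, and you actually put your finger on the symptom when you noticed the exponent mismatch and started worrying about whether the budget is $\dim_P\K$ or $2\dim_P\K$. It is $2\dim_P\K$: one really must control all ordered \emph{pairs} of net points, and the paper does exactly that ($k_i^{2s}$ pairs). The discrepancy is not in the counting but in the strength of the anticoncentration your construction delivers.

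With i.i.d.\ Bernoulli$(1/2)$ coins on $\eps$-cubes, conditioning on everything outside a fixed cube $D\subset K\setminus K'$ of side $\delta$ leaves a sum of $N\approx(\delta/\eps)^d$ independent $\pm\eps^d$ terms, and Erd\H os--Littlewood--Offord gives only $\Pr[\text{hit a point}]\lesssim N^{-1/2}$. Plugging this through, $r$ independent copies give failure probability $\lesssim N^{-r/2}$, and after matching scales (the paper takes $\delta\sim1/\log_2 n_i$, $\eps=1/n_i$, and the net at Hausdorff scale $1/k_i$ with $k_i\sim n_i^{d/(d-b)}$) the union bound requires $r(d-b)/2>2\dim_P\K$, i.e.\ $r=\lfloor 4\dim_P\K/(d-b)\rfloor+1$, not the claimed value. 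Your attempted fixes (per-net-point instead of per-pair, or ``absorbing the second set by continuity'') do not recover the missing factor of $2$: for each fixed $K$ you still have to beat $\sim k_i^{s}$ competitors, and the events for different competitors are not usefully correlated.

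The paper's key idea is a different random set that gets $N^{-1}$ instead of $N^{-1/2}$. In each ``medium'' cube $D$ (side $1/\log_2 n_i$) they first choose the \emph{number} $m_D$ of small cubes uniformly in $\{0,1,\ldots,\lfloor p_i(n_i/\log_2 n_i)^d\rfloor\}$, and only then choose which $m_D$ small cubes. Conditioning on everything except $m_D$ for a single $D\subset K\setminus K'$, the quantity $\lambda(A\cap K)-\lambda(A\cap K')$ becomes $m_D/n_i^d$ plus a constant, so it is \emph{uniform} on an arithmetic progression of step $1/n_i^d$ and length $\approx p_i(n_i/\log_2 n_i)^d$; hence the probability of landing in any interval of length $1/(4n_i^d)$ is $\lesssim (\log_2 n_i)^d/(p_i n_i^d)\approx N^{-1}$. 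The paper even remarks explicitly on this point: the count $m_D$ has ``standard deviation $\approx n_i^d$'' here, versus $\approx\sqrt{n_i^d}$ for the i.i.d.\ construction you proposed. With this sharper bound the union over $k_i^{2s}$ pairs and $r$ copies succeeds exactly when $r>2\dim_P\K/(d-b)$, giving the stated $r=\lfloor 2\dim_P\K/(d-b)\rfloor+1$. A further technical point you would also need is the multi-scale packaging: the paper takes $A=A_1\triangle A_2\triangle\cdots$ with rapidly decreasing densities $p_i$ so that scales $>i$ perturb $\lambda(A\cap K)$ by less than $1/(4n_i^d)$, which is what lets one Borel--Cantelli over $i$ rather than work at a single scale.
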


\begin{proof}
We define a random set $A$ and we show that a set $K\in\K$ can be reconstructed
using $r$ independent copies of $A$.

Let $1 > p_1>p_2>\cdots$ be a fast decreasing sequence of reals such that $\sum p_i<\infty$. 
Let $(n_i)$ be an increasing sequence of $2$-powers converging to $\infty$ sufficiently fast.
Let us also assume that $n_{i-1}$ divides $\log_2 n_i$,
 and $\log_2 n_i$ divides $n_i$ for 
each $i$, which conditions
automatically hold if $n_i=2^{2^{l_i}}$ and $l_i$ is a sufficiently fast
increasing sequence of integers.


For each $i$ we take the grids of cubes $\iJ_i=\{(v+[0,1)^d)/n_i\ : v\in\Z^d\}$
and $\iD_i=\{(v+[0,1)^d)/\log_2 n_i\ : v\in\Z^d\}$.
Since $n_{i-1}$ divides $\log_2 n_i$ and $\log_2 n_i$ divides $n_i$,
the partition $\iJ_i$ is finer than $\iD_i$, which is finer than $\iJ_{i-1}$.

Now we define a random set $A_i\subset \R^d$ as the union of certain cubes of
$\iJ_i$ in the following way. 
Independently for each cube $D$ of $\iD_i$ we do the following.
Choose a random integer $m_D$ between $0$ and 
$p_i (n_i/\log_2 n_i)^d$ uniformly.
Then choose randomly $m_D$ cubes of $\iJ_i$ in the cube $D$ (selecting 
each cube with equal probability) and let $H_D$ be their union. 
Finally, let $A_i=\cup_{D\in \iD_i} H_D$.

This way each cube of $\iJ_i$ is contained in $A_i$ with probability approximately $p_i/2$, and points of distance more than $\sqrt{d} / \log_2 n_i$ are independent. (Note the major difference between this random set $A_i$ and the random set which independently chooses each cube of $\iJ_i$ with probability $p_i/2$: The number of $\iJ_i$-cubes of our $A_i$ inside each cube of $\iD_i$ has standard deviation $\approx n_i^d$, while in the other construction it would have standard deviation $\approx \sqrt{n_i^d}$. We ignored $p_i$ here as we will choose $n_i\gg 1/p_i$.)

Since $\sum p_i <\infty$, almost every point of $\R^d$ is contained in only
finitely many sets $A_i$. Hence the following infinite symmetric difference
makes sense (up to measure zero): let $A=A_1 \triangle A_2 \triangle \cdots$.

The key property of this random set is the following.

\bl\label{l:prob}
If $K, K'\in \K$, $K, K'\su [-i,i]^d$ and $K\sm K'$ contains a cube
$D\in \iD_i$ then the probability that
$$
|\lambda(A\cap K) - \lambda(A\cap K')| < \frac{1}{4n_i^d}
$$
is at most $(\log_2 n_i)^{d}/(p_i n_i^{d})$.
\el

\bp
Let $B_i=A_1\triangle \cdots \triangle A_{i}$ ($i=1,2,\ldots$).

First we prove that the probability that
\beq\label{lambdadiff}
|\lambda(B_i \cap K) -
\lambda(B_i \cap K')| 
< 1/(2n_i^d)  
\eeq
is at most $(\log_2 n_i)^d/(p_i n_i^d)$.


Since $D\su K$, we have
\beq\label{eq:difference}
\lambda(B_i \cap K) -
\lambda(B_i \cap K') = 
\lambda(B_i \cap D) +
\lambda(B_i \cap K \cap D^c) -
\lambda(B_i \cap K').
\eeq
Note that the last two terms of the right-hand side depend only on 
$A_1,\ldots,A_{i-1}$ and $A_i\sm D$. Let us fix these random variables.
Then the last two terms are constants, and we know the (conditional) 
distribution of $\lambda(B_i \cap D)$:
this is $m_D/n_i^d$ if $D$ is disjoint from $B_{i-1}$, 
and it is $\la(D)-m_D/n_i^d$ if $D$ is contained in $B_{i-1}$.
Hence the absolute value of the expression of 
\eqref{eq:difference} can be less than $1/(2n_i^d)$ 
only for at most one value of $m_D$. 
Since each value of $m_D$ was chosen with probability at most 
$(\log_2 n_i)^d/(p_i n_i^d)$, this implies that
the conditional probability of \eqref{lambdadiff} is at most  
$(\log_2 n_i)^d/(p_i n_i^d)$.
Since this holds for each fixed choice of $A_1,\ldots,A_{i-1}$ and $A_i\sm D$,  
we get that \eqref{lambdadiff} holds indeed with probability at most  
$(\log_2 n_i)^d/(p_i n_i^d)$.

We can choose $p_{i+1}, p_{i+2}, \ldots$ such that
$$\sum_{j=i+1}^\infty p_j < \frac{1}{100n_i^d (2i)^d}.$$
Then $$\sum_{j=i+1}^\infty \lambda(A_j \cap K) \le  \sum_{j=i+1}^\infty \lambda(A_j \cap [-i,i]^d) < \frac{1}{100n_i^d}$$
since $K\subset [-i,i]^d$ and by construction the density of each $A_j$ is 
at most $p_j$ in each cube of the form $a+[0,1]^d, a\in\Z^d$.
Clearly the same inequality holds for $K'$.

Combining these inequalities with (\ref{lambdadiff})
we get that
$$
|\lambda(A\cap K) - \lambda(A\cap K')| \ge \frac{1}{2n_i^d} - \frac{2}{100n_i^d} \ge  \frac{1}{4n_i^d}
$$
with probability at least $1-(\log_2 n_i)^d/(p_i n_i^d)$.
\ep

Let $s>\dim_P\K$ be such that
$\left\lfloor \frac{2 \,{\dim}_P \K}{d-b}\right\rfloor = \left\lfloor \frac{2s}{d-b}\right\rfloor$.
We may suppose without loss of generality that $\overline{\dim}_B \partial K < b$ for every $K\in\K$ by increasing $b$ such that $\left\lfloor \frac{2s}{d-b}\right\rfloor$ does not increase.
Write $\K$ as $\bigcup_{j=1}^\infty \K'_j$ such that each $\K'_j$ has upper box dimension less than $s$. 

For every $K\in\K$ there exists a positive integer $m_0(K)$ such that for every $m\ge m_0(K)$, 
$$
\lambda(B(\partial K, 1/m)) \le m^{b-d}
$$
since the upper box dimension of $\partial K$ is less than $b$. 
For $K,L\in\K$, using that
$K\triangle L \subset B(\partial K \cup \partial L, d_H(K,L))$,   
this implies that
\beq\label{close}
\lambda(K \triangle L) \le 2 m^{b-d} \quad
\textrm{ if } d_H(K,L)\le 1/m \textrm{ and } m_0(K),m_0(L)\le m.
\eeq

For $i\ge 1$ let
$$
\K_i=\{ K\in \bigcup_{j=1}^i \K'_j \,:\, m_0(K)\le i, \,K\subset [-i,i]^d \}.
$$
Thus $\K_1\subset \K_2 \subset \cdots$, $\bigcup_i \K_i = \K$, and the upper box dimension of each $\K_i$ is less than $s$.

For each $i$ let
$$
\KK_{i}=\{(K,K')\in{\K_i}^2: 
K\sm K' \text{ or } K'\sm K \text{ contains a cube } D\in\iD_i \}.
$$
Then for every integer $N$,
using the assumption that $K=\overline{\text{int}\,K}$ for every $K\in\K$ and thus $\inte K \triangle \inte K' \neq\emptyset$ for every $K\neq K'$, $K,K' \in \K$,
we get that
\beq\label{terfelbontas}
\bigcup_{i=N}^\infty \KK_i = \K^2 \setminus \{(K,K): K\in \K^2\}.
\eeq

Let us fix $i$. 
Since $\K_i$ has upper box dimension less than $s$, for every sufficiently large positive integer $k_i$ (say, for $k_i \ge \kappa_i$) there exist (at most) 
$k_i^s$ sets $\K_i^j\su \K_i$ ($1\le j \le k_i^s$) 
with diameter at most $1/k_i$ that
cover $\K_i$. 

For each pair $(j,j')\in\{1,\ldots,k_i^s\}^2$ pick a pair 
$$
(K_{i,(j,j')},K'_{i,(j,j')}) \in \KK_i \cap (\K_i^j \times \K_i^{j'})
$$
whenever such pair exists.
Then
\beq\label{quantors}
\forall (K,K')\in \KK_i\ 
\exists (j,j')\in\{1,\ldots,k_i^s\}^2\ : \
d_H(K,K_{i,(j,j')}),d_H(K',K'_{i,(j,j')}) \le 1/k_i.
\eeq

Repeating the construction of $A$ independently $r$ times we 
obtain $A_1,\ldots,A_r$. We claim that an element $K\in \K$ can be reconstructed using these sets,
provided we choose the sequences $(n_i)$ and $(p_i)$ appropriately.

For each picked pair $(K_{i,(j,j')},K'_{i,(j,j')})$ we
apply Lemma~\ref{l:prob} to get that the probability that
there exists $1\le t\le r$ such that
\beq\label{probbecsles2}
|\lambda(A^t\cap K_{i,(j,j')}) - \lambda(A^t\cap K'_{i,(j,j')})| \ge \frac{1}{4n_i^d}
\eeq
is at least $1-(\log_2 n_i)^{rd}/(p_i^r n_i^{rd})$.

Since there are at most $k_i^{2s}$ possible pairs $(j, j')$, this implies that with probability at least 
$1-k_i^{2s}(\log_2 n_i)^{rd}/(p_i^r n_i^{rd})$, 
for every picked pair $(K_{i,(j,j')},K'_{i,(j,j')})$ there exists $1\le t\le r$ such that
(\ref{probbecsles2}) holds. 
If
\beq\label{bigki}
k_i\ge \max(i,\kappa_i),
\eeq
then using (\ref{close}) and (\ref{quantors}), this
implies that  with probability at 
least 
$$
1-k_i^{2s}(\log_2 n_i)^{rd}/(p_i^r n_i^{rd}),
$$
for any $(K,K')\in \KK_i$ 
there exists $1\le t\le r$ such that 
\beq\label{probbecsles4}
|\lambda(A^t\cap K) - \lambda(A^t\cap K')| \ge \frac{1}{4n_i^d}-4k_i^{b-d}.
\eeq

Therefore if we choose the sequences $(n_i)$ and $(k_i)$ so that (\ref{bigki}), 
\beq\label{posprob}
\sum_{i=1}^\infty k_i^{2s}(\log_2 n_i)^{rd}/(p_i^r n_i^{rd}) < \infty
\eeq
and
\beq\label{posdiff}
\frac{1}{4n_i^d}-4k_i^{b-d} > 0 \qquad (i=1,2,\ldots)
\eeq
hold then by (\ref{terfelbontas}) and the Borel--Cantelli lemma 
we get that almost surely for
any two distinct $K,K'\in\K$ we have 
$\lambda(A^t\cap K) \neq \lambda(A^t\cap K')$
for at least one $t\in\{1,\ldots,r\}$,
which is exactly what we need to prove.

Choose $k_i$ such that $k_i^{b-d}=n_i^{-d}/64$; that is, $k_i=n_i^{d/(d-b)}/64$. Then (\ref{posdiff}) clearly holds and \eqref{bigki} also holds
if $n_i$ is large enough.
Then, using that $r= \left\lfloor 2s/(d-b) \right \rfloor +1$, we have
$$k_i^{2s}(\log_2 n_i)^{rd}/(p_i^r n_i^{rd}) = 64^{-2s} p_i^{-r} (\log_2 n_i)^{rd} n_i^{d(2s/(d-b) -r)} \le n_i^{-\delta d}$$
for $\delta=(r-2s/(d-b))/2>0$, provided that we choose $n_i$ large enough compared to $1/p_i$.
Since $\delta>0$, this implies that (\ref{posprob}) also holds if $(n_i)$ 
is increasing fast enough. 
This completes the proof of the theorem.
\end{proof}

\section{Open questions}

In this final section we collect some of the numerous remaining open questions.

\begin{question}
How many test sets are needed to reconstruct
an interval in $\R$?
\end{question}

The answer is $3$, $4$ or $5$ by Theorem \ref{t:interval} and Corollary \ref{c:concrete} \eqref{5enough}.

\begin{question}
Let $d\ge 2$. How many test sets are needed to reconstruct a ball in $\R^d$?
For example, does $d+1$ suffice?
\end{question}

We know by Corollary~\ref{c:concrete}~(\ref{ball}) that $2d+3$ test sets are 
enough. 
By Corollary~\ref{c:magnified}~(\ref{largeball}), if we consider only 
balls of radius at least $1$ then the answer is $d+1$ (for $d\ge 2$).
In fact, we also do not know whether the restriction $r\ge 1$ on the
magnification rate is necessary for the other two corollaries 
(\ref{c:magnified}~(\ref{polytope}) and \ref{c:genmagn}) 
of Section~\ref{s:magn}.

\begin{question}
Let $d=1$ or $d=2$. How many test sets are needed to reconstruct
a translate of an arbitrary fixed bounded measurable subset of $\R^d$ of positive measure? 
For example, does $d$ suffice? Does finitely many suffice?
\end{question}

For $d \ge 3$ we know by Corollary \ref{c:posmeasure} that $d$ sets suffice.



\begin{question}
Let $d = 2$ or $d = 3$ and let $E\subset\R^d$ be a bounded set of positive
Lebesgue measure. Can a set of the form $rE+x$ $(r\ge 1, x\in\R^d)$ be
reconstructed using $d+1$ test sets? And finitely many test sets? What if we
drop the condition $r \ge 1$?
\end{question} 

Theorem \ref{t:interval} provides a negative answer for $d = 1$, whereas
Corollary \ref{c:genmagn} shows that the answer is affirmative (with $r \ge
1$) for $d\ge 4$.

\end{document}